\newtheorem{thm}{Theorem}[section]
\newtheorem{lem}[thm]{Lemma}
\newtheorem{prop}[thm]{Proposition}
\newtheorem{cor}[thm]{Corollary}
\newtheorem{NN}[thm]{}
\theoremstyle{definition}\newtheorem{df}[thm]{Definition}
\theoremstyle{definition}\newtheorem{rem}[thm]{Remark}
\theoremstyle{definition}
\renewcommand{\phi}{\varphi}
\newcommand{\N}{\mathbb{N}}
\newcommand{\Z}{\mathbb{Z}}
\newcommand{\R}{\mathbb{R}}
\newcommand{\C}{\mathbb{C}}
\newcommand{\T}{\mathbb{T}}
\newcommand{\Aff}{\operatorname{Aff}}
\newcommand{\id}{\operatorname{id}}
\newcommand{\morp}{contractive completely positive linear map}
\newcommand{\hm}{homomorphism}
\newcommand{\dt}{\delta}
\newcommand{\ep}{\epsilon}
\newcommand{\andeqn}{\,\,\,{\rm and}\,\,\,}
\newcommand{\rforal}{\,\,\,{\rm for\,\,\,all}\,\,\,}
\newcommand{\CA}{$C^*$-algebra}
\newcommand{\SCA}{$C^*$-subalgebra}
\newcommand{\af}{{\alpha}}
\newcommand{\bt}{{\beta}}
\newcommand{\beq}{\begin{eqnarray}}
\newcommand{\eneq}{\end{eqnarray}}
\newcommand{\tforal}{\,\,\,\text{for\,\,\,all}\,\,\,}
\newcommand{\tand}{\,\,\,\text{and}\,\,\,}
\title{Homomorphisms from AH-algebras}
\author{Huaxin Lin
 }
\date{}
\begin{document}

\maketitle

\begin{abstract}
Let $C$ be a general unital AH-algebra and let $A$ be a unital
simple $C^*$-algebra with tracial  rank at most one. Suppose that $\phi, \psi: C\to A$ are two unital monomorphisms. We show that $\phi$ and $\psi$ are approximately unitarily equivalent if and only if
\beq\nonumber
[\phi]&=&[\psi]\,\,\,{\rm in}\,\,\, KL(C,A),\\\nonumber
\phi_{\sharp}&=&\psi_{\sharp}\tand\\
\phi_{\tiny{\rho}}&=&\psi_{\rho},
\eneq
where $\phi_{\sharp}$ and $\psi_{\sharp}$ are continuous affine
maps from tracial state space $T(A)$ of $A$ to faithful tracial state space $T_{\rm f}(C)$ of $C$ induced by $\phi$ and $\psi,$ respectively, and $\phi_{\rho}$ and
$\psi_{\rho}$ are induced \hm s from $K_1(C)$ into $\Aff(T(A))/\overline{\rho_A(K_0(A))},$ where $\Aff(T(A))$ is the space of all real affine continuous functions on $T(A)$ and
$\overline{\rho_A(K_0(A))}$ is the closure of the image of $K_0(A)$ in
the affine space $\Aff(T(A)).$  In particular, the above holds
for $C=C(X),$ the algebra of continuous functions on a compact metric space.
  An approximate version
 of this is also obtained. We also show that, given a triple of
compatible elements $\kappa\in KL_e(C,A)^{++},$ an affine map
$\gamma: T(C)\to T_{\rm f}(C)$ and a \hm\, $\af: K_1(C)\to \Aff(T(A))/\overline{\rho_A(K_0(A))},$ there exists a unital monomorphism $\phi: C\to A$ such that
$[h]=\kappa,$ $h_{\sharp}=\gamma$ and $\phi_{\rho}=\af.$
\end{abstract}

\section{Introduction}

Let $X$ be a compact metric space and let $A$ be a unital simple \CA.
Let $\phi, \, \psi: C(X)\to A$ be two \hm s.
We study the problem
when these two maps from $C(X),$ the commutative \CA\, of continuous functions on $X,$ into $A$ are approximately unitarily equivalent, i.e., when there exists a sequence of unitaries $\{u_n\}\subset A$ such that
$$
\lim_{n\to\infty} u_n^*\psi(f)u_n=\phi(f)\tforal f\in C(X).
$$
In the case that $X$ is a compact subset of the plane and $A$ is the $n\times n$ matrix algebra,
two such maps are unitarily equivalent if and only if the corresponding normal matrices have the same set of eigenvalues
(counting multiplicity).   Brown-Douglass-Fillmore's study of essentially normal operators led
to the following theorem: Two unital monomorphisms from $C(X)$ (when $X$ is a compact subset of the plane) into the Calkin algebra are unitarily equivalent if and only if they induce the same \hm\, from $K_1(C(X))$ into $\Z. $  It should be noted that
both the $n\times n$ matrix algebra and the Calkin algebra are unital simple \CA s of real rank zero.

Unital separable commutative \CA s are of the form $C(X)$ for some compact metric space by the Gelfand transformation. Therefore
the study of \CA s may be viewed as the study of non-commutative topology. As in the topology, one studies
continuous maps between spaces, in \CA\, theory, one studies the \hm s
from one \CA\, to another. In this point of view, the study of \hm s from one \CA \, to another is one of the  fundamental problems in the \CA\, theory. At the present paper, we assume that the target algebra is a unital simple \CA, which conforms to the previous two mentioned cases. Simple \CA s may also be viewed as the opposite end of commutative \CA s.  For the source algebra, we begin with the case that it is the commutative \CA\, following the two above mentioned cases. However, we will study the case that source algebras are general unital AH-algebras (They are not necessarily simple, nor of slow dimension growth).

Let $\phi,\, \psi: C(X)\to A$ be two unital \hm s and  let $I={\rm ker}\phi.$ Then $I={\rm ker}\psi,$ if $\phi$ and $\psi$ are
approximately unitarily equivalent. Therefore, one may study the induced \hm s from $C(X)/I$ instead.
Note that $C(X)/I$ is isomorphic to $C(Y)$ for some compact subset of $X.$ To simplify the matter,
we will only study monomorphisms.
The problem has been studied (for some earlier results, for example, see \cite{Lnpac96} and \cite{Lncan97} ). Dadarlat
(\cite{D1}) showed that, if $C=C(X)$ and
$A$ is a unital purely infinite simple \CA\, (such as the Calkin algebra),
then two unital monomorphisms from $C$ into $A$ are approximately unitarily equivalent if and only if they induce the same element in $KL(C,A).$     When the target \CA s are finite,  other invariants such as traces have to be considered.
When $A$ is a unital simple \CA\, with stable rank one, real rank zero, weakly unperforated $K_0(A)$ and a unique tracial state, it is shown in \cite{GL2} that $\phi$ and $\psi$ are approximately unitarily equivalent if and only if
$[\phi]=[\psi]$ in $KL(C(X), A)$ and $\tau\circ \phi=\tau\circ \psi.$  When the real rank of $A$ is not zero  one needs
additional data to determine when $\phi$ and $\psi$ are approximately unitarily equivalent.
In fact, it is shown (\cite{LnApp}) that when $C$ is a some special unital AH-algebra and $A$ is a unital simple
\CA\, with tracial rank at most one, two unital monomorphisms $\phi, \psi: C\to A$ are approximately unitarily equivalent if and only $[\phi]=[\psi],$ $\phi_{\sharp}=\psi_{\sharp}$ and $\phi^{\ddag}=\psi^{\ddag},$    where
$\phi_{\sharp}$ and $\phi^{\ddag}$ will be defined below ((\ref{d1}) and (\ref{Harp-Scand})).
The  technical condition imposed on $C(X)$ is basically said that, $K$-theoretically speaking, $C(X)$ has a lower rank.
In this paper this  restriction
on $AH$-algebras has been removed.  A complete criterion is given for two unital monomorphisms from  a general $AH$-algebra into a unital simple \CA\, with tracial rank at most one being approximately unitarily equivalent.


 One of the long standing problems in the classification theory is to classify locally AH-algebra with no dimension growth.
 The problem could be solved if one could establish  a version of Gong's  decomposition theorem  which
 allows maps that are not exactly \hm s.  Over more than a decade, since
 the proof of Gong's decomposition theorem first appeared, the technical difficulty to generalize it to include almost multiplicative maps had remained elusive.  This author's many attempts failed during these years. It is the desire to prove that unital simple locally AH-algebras
with no dimension growth can also be classified by their Elliott invariant drew author's attention again to Gong's decomposition
 theorem. One application of the results in this paper will be the proof
 that unital simple locally AH-algebras with slow dimension growth are classifiable by the Elliott invariant (\cite{LnLAH}).

 We also believe that the main results presented here have their own independent interest as discussed at the beginning of this introduction.

  The paper is organized as follows:
 Section 2 serves largely as preliminaries for the whole paper.   In Section 3, we prove Theorem \ref{NMT1}
 which is the main technical advance of this paper.  In Section 4, we collect  a number of  miscellaneous lemmas which will be used in the proof
 of the main results.  In Section 5, we prove the  main results.
 To complete our results and make application possible, in Section 6, we provide the description
 of the range of approximate unitary equivalence classes of unital monomorphisms from a unital AH-algebra to a unital simple \CA\, of tracial rank at most one.
 Applications to the study of tracial rank and classification
 of unital simple locally AH-algebras will appear elsewhere (\cite{LnLAH}).

{\bf Acknowledgment}
Most of this work was done when the author was  in East China Normal University in the summer 2010. This work is partially supported by
University of Oregon, East China Normal University and
a NSF grant. The author would like to thank Claude Schochet who pointed to us there was an awkward computation
in an earlier version of this paper which was now avoided.

\section{Preliminaries}

\begin{NN}\label{d1}
{\rm Let $A$ be a unital \CA. Denote by $T(A)$ the convex set of tracial states of $C.$
Denote by $T_{\rm f}(A)$ the convex set of all faithful tracial states.
Let $\Aff(T(A))$ be the space of all real affine continuous functions on $T(A).$
Denote by $M_n(A)$ the matrixes over $A.$  By regarding $M_n(A)$ as a subset of $M_{n+1}(A),$ define $M_{\infty}(A)=\cup_{n=1}^{\infty}M_n(A).$  If $\tau\in T(A),$ then $\tau\otimes {\rm Tr},$ where ${\rm Tr}$ is standard
trace on $M_n,$ is a trace on $M_n(A).$  Throughout this paper, we may use $\tau$ for  $\tau\otimes {\rm Tr}$ without
warning.

If $B$ is another \CA\, and $\phi: A\to B$ is a  \morp, then
$\phi\otimes {\id}_{M_n}$ gives a \morp\, from $M_n(A)$ to $M_n(B).$ Throughout this paper, we may use
$\phi$ for $\phi\otimes {\rm id}_{M_n}$ for convenience.

Let  $C$ and $A$ be  two unital \CA s with $T(C)\not=\emptyset$ and $T(A)\not=\emptyset.$
Suppose that $h: C\to A$ is a unital \hm. Define an affine continuous map $h_{\sharp}: T(A)\to T(C)$ by
$h_{\sharp}(\tau)(c)=\tau\circ h(c)$ for all $\tau\in T(A)$ and $c\in C.$ If $A$ is simple and $h$ is a monomorphism,
then $h_{\sharp}$ maps $T(A)$ into $T_{\rm f}(C).$
}

\end{NN}

\begin{df}\label{rho}
Let $C$ be a unital \CA\, with $T(C)\not=\emptyset.$
For each $p\in M_n(C)$ define $\check{p}(\tau)=\tau\otimes {\rm Tr}(p)$ for all $\tau\in T(A),$ where ${\rm Tr}$ is the standard trace on $M_n.$ This gives  positive \hm\, $\rho_C: K_0(C)\to \Aff(T(C)).$
\end{df}

\begin{NN}
{\rm If $A$ is a unital  \CA\, with tracial rank at most one, then we will write $TR(A)\le 1$ (see \cite{Lntr}).}
\end{NN}

\begin{NN}\label{Harp-Scand}
{\rm Let $C$ be a unital \CA. Denote by $U(C)$ the unitary group of $C$ and denote
by $U_0(C)$ the subgroup of $U(C)$ consisting of unitaries which connected to $1_C$ by a continuous path of unitaries.  Denote by
$CU(C)$ be the closure of the normal subgroup generated by commutators of $U_0(C).$ Let $u\in U(C).$
Then ${\bar u}$ is the image of $u$ in $U(C)/CU(C).$


Now suppose that $T(C)\not=\emptyset.$
Let $n\ge 1$ be an integer. Let $u\in U_0(M_n(C)).$ Let $\gamma\in C([0,1], U_0(M_n(C)))$
which is piecewise smooth such that
$\gamma(1)=u$ and $\gamma(0)=1.$ Define
\beq\label{HS-1}
\Delta(\gamma)(\tau)={1\over{2\pi i}}\int_0^1 \tau({d\gamma(t)\over{dt}}\gamma^{-1}(t))dt,
\eneq
where $\tau$ is identified with $\tau\otimes {\rm Tr}$ (in particular,   for $n>1,$ $\tau$ in the above
formula is not the normalized trace).   As in (\cite{Tm}), the de la Harpe-Scandalis determinant
provides a continuous \hm\,
\beq\label{HS-2}
{\bar \Delta}: \cup_{k=1}^{\infty}(U_0(M_k(C))/U_0(M_k(C))\cap  CU_0(M_k(C))\to \Aff(T(C))/\overline{\rho_C(K_0(C))}.
\eneq

We will use $CU(M_{\infty}(C))$ for $\cup_{k=1}^{\infty} CU(M_k(C)).$

Define a metric as follows. If $u, v\in U(M_n(C))$ such that $uv^*\in U_0(M_n(C)),$ define
\beq\label{HS-3}
{\rm dist}({\bar u}, {\bar v})=\|{\bar \Delta}(uv^*)\|.
\eneq
Note that if $u, v\in U_0(M_n(C)),$ then
\beq\nonumber
{\rm dist}({\bar u}, {\bar v})=\|{\bar \Delta}(u)-{\bar \Delta}(v)\|
\eneq
(where the norm is the quotient norm in $\Aff(T(C))/\overline{\rho_C(K_0(C))}$).

Note that if $u\in CU(C),$  then $[u]=0$ in $K_1(C).$ Using de la Harpe-Scandalis determinant, by K. Thomsen (see \cite{Tm}),
one has the following short splitting exact sequence:
\beq\label{HS-4}
0\to \Aff(T(C))/\overline{\rho_C(K_0(C))}\to  U(M_{\infty}(C))/CU(M_{\infty}(C))
\to_{\pi_C} K_1(C)\to 0.
\eneq
We will fix one splitting map $J_C: K_1(C)\to  U(M_{\infty}(C))/CU(M_{\infty}(C))$  such that
$\pi_C\circ J_C={\rm id}_{K_1(C)}.$ For each ${\bar u}\in J_C(K_1(C)),$ select and fix one element $u_c\in \cup_{n=1}^{\infty}M_n(C)$ such that
${\overline{u_c}}={\bar u}.$ Denote this set by $U_c(K_1(C)).$

If $A$ is a unital \CA\, and $\phi: C\to A$ is a unital \hm, then $\phi$ induces a continuous map
$$\phi^{\ddag}:
U(M_{\infty}(C))/CU(M_{\infty}(C))\to  U(M_{\infty}(A))/CU(M_{\infty}(A)).
$$
Denote by $\phi^{\dag}: K_1(C)\to \Aff(T(A))/{\overline{\rho_A(K_0(A))}}$ the map $({\rm id}-J_A\circ \pi_A)\circ \phi^{\ddag}\circ J_C,$
where $J_A: K_1(A)\to U(M_{\infty}(A))/CU(M_{\infty}(A)$ is a fixed splitting map. Note that
$\phi^{\dag}$ depends on the splitting maps, but for each \CA,  splitting maps are fixed.

If  $A$ is a unital simple \CA\, with $TR(A)\le 1,$  by Cor. 3.5 of \cite{Lnhomtr1},
$$
U_0(C)/CU(C)=U_0(M_n(C))/CU(M_n(C))
$$
for all $n\ge 1.$
Since in this case, $K_1(C)=U(C)/U_0(C),$ we have isomorphism between
$U(C)/CU(C)$ and $U(M_{\infty}(C))/CU(M_{\infty}(C)).$
}

\end{NN}

\begin{NN}
{\rm Let $A$ be a unital \CA\, and let $u\in U_0(A).$
Let $\gamma\in C([0,1], U(A))$ such that $\gamma(0)=1$ and $\gamma(1)=u.$ Denote by
${\rm Length}(\{\gamma\})$ the length of the path $\gamma.$
Put
$$
{\rm cel}(u)=\inf\{{\rm Length}\gamma: \gamma\in C([0,1], U(A)), \gamma(0)=1\andeqn \gamma(1)=u\}.
$$
}
\end{NN}

\begin{df}\label{KKdef}
Let $C$ be a \CA\, and let ${\cal P}\subset \underline{K}(C).$
There exists $\dt>0$ and a finite subset ${\cal G}\subset C$ such that, for any $\dt$-${\cal G}$-multiplicative
\morp\, $L: C\to A$ (for any \CA\, $A$),
$[L]|_{\cal P}$ is well defined (see 0.6 of \cite{LnAMST99} and 2.3 of \cite{Lnmem} ). Such a triple $(\dt, {\cal G},{\cal P})$ is called
local $\underline{K}$-triple (see \cite{DE}). If $K_i(C)$ is finitely generated ($i=0,1$) and ${\cal P}$ is large enough,
then $[L]|_{\cal P}$ defines an element in $KK(C, A)$ (see 2.4 of \cite{Lnmem}).  In such cases, we will write
$[L]$ instead of $[L]|_{\cal P},$ and we will call $(\dt, {\cal G}, {\cal P})$  a $KK$-triple and $(\dt, {\cal G})$ a $KK$-pair.
Note that, if $u$ is a unitary then, we write
$\langle L(u)\rangle =L(u)(L(u)^*L(u))^{-1/2}$ when $\|L(u^*)L(u)-1\|<1$ and $\|L(u)L(u^*)-1\|<1.$  In what follows we will always assume
that $\|L(u^*)L(u)-1\|<1$ and $\|L(u)L(u^*)-1\|<1,$ when we
write $\langle L(u) \rangle.$

Suppose that $C$ is a separable \CA\, and $C$ is the closure of
$\cup_{n=1}^{\infty} C_n,$ where each $C_n=\lim_{m\to\infty} (C_{n,m}, \phi_m^{(n)})$ and $K_i(C_{n,m})$ is finitely generated ($i=0,1$). Denote by $\imath_n: C_n\to C$ the embedding and $\phi^{(n)}_{m, \infty}: C_{n,m}\to C_n$ the \hm\, induced by the inductive system $(C_{n,m},\phi_m).$
We say that $(\dt, {\cal G}, {\cal P})$ is a $KL$-triple, if
$[\imath_n\circ \phi^{(n)}_{m,\infty}]({\cal P'})\supset {\cal P}$ for some $n,\,m$ and some finite subset ${\cal P}'\subset \underline{K}(C_{n,m})$ and
if any
$\dt$-${\cal G}$-multiplicative \morp\, $L: C\to A$ (for any $A$) gives a $\dt$-${\cal G'}$-multiplicative \morp\, $L\circ \imath_n\circ\phi^{(n)}_{m,\infty}$ so that
$(\dt, {\cal G}', {\cal P}')$ is a $KK$-triple.
\end{df}

\begin{prop}\label{amlift}
Let $A$ be a (unital) separable amenable \CA\, and $\{I_n\}$ be a sequence of increasing closed two sided ideals of $A.$
Let $I=\overline{\cup_{n=1}^{\infty} I_n}$ and $B=A/I.$ Suppose that ${\cal F}\subset B$ is a finite subset and $\ep>0.$
Then there exists an integer $n\ge 1$ and a (unital) \morp\, $L: B\to A/I_n$ which is $\ep$-${\cal F}$-multiplicative
such that $\pi_n\circ L(b)=b$ for all $b\in B,$ where  $\pi_n: A/I_n\to A/I$ is the quotient map.
\end{prop}

\begin{proof}
By the Efros-Choi Lifting Theorem, there is a (unital) \morp\, $L_0: B\to A$ such that
$\pi\circ L_0={\rm id}_B,$ where $\pi: A\to A/I$ is the quotient map.
For each pair $a, b\in B,$ let $c(a,b)=L_0(ab)-L_0(a)L_0(b).$ Then
$$
\pi(c(a,b))=\pi(L_0(ab))-\pi(L_0(a)L_0(b))=0.
$$
It follows that $c(a,b)\in I.$ Therefore there exists an integer $n\ge 1$ such that
$$
\|c(a, b)-I_n\|<\ep\tforal a, b\in {\cal F}.
$$
Let $\pi_n: A\to A/I_n$ be the quotient map and $L=\pi_n\circ L_0: B\to A/I_n.$
Then
$$
\|L(ab)-L(a)L(b)\|=\|\pi_n(c(a,b)\|<\ep\tforal a, b\in {\cal F}.
$$
\end{proof}

\begin{df}\label{or}
{\rm Let $X$ be a compact metric space, let $x\in X$ and let $r>0.$ Denote by $O(x, r)$ the open ball with center at $x$ and radius $r.$
If $x$ is not specified $O(r)$ is an open ball of radius $r.$}

\end{df}

\begin{df}\label{mu}
Let $X$ be a metric space and $s: C(X)\to \C$ be a state.
Denote by $\mu_s$ the probability Borel measure induced by $s.$
\end{df}

The following could be proved directly but also follows from 4.6 of \cite{Lncd}.

\begin{thm}\label{Uniqq}
Let $X$ be a compact metric space, let $\ep>0$ and let ${\cal F}\subset C(X)$ be a finite
subset. There exists $\eta>0$ satisfying the  following: for any $\sigma>0,$ there exists
$\gamma>0,$ $\dt>0,$ a  finite subset ${\cal G}\subset C(X)$  and a finite
subset ${\cal H}\subset C(X)_{s.a}$ and a finite
subset ${\cal P}\subset \underline{K}(C(X))$ satisfying the following:

For any unital $\dt$-${\cal G}$-multiplicative \morp s $\phi, \psi: C(X)\to M_n$
(for some integer $n\ge 1$) for which
\beq\label{Uniqq-1}
[\phi]|_{\cal P}=[\psi]|_{\cal P},\,\,\,
\mu_{\tau\circ\phi}(O_r)\ge \sigma
\eneq
for all open balls $O_r$ of radius $r\ge \eta$ and
\beq\label{Uniqq-2}
|\tau\circ \phi(a)-\tau\circ \psi(a)|<\gamma\tforal a\in {\cal H},
\eneq
where $\tau$ is the tracial state of $M_n,$
there is a unitary $u\in M_n$ such that
\beq\label{Uniqq-3}
\|\phi(f)-{\rm Ad}\, u\circ \psi(f)\|<\ep
\tforal f\in {\cal F}.
\eneq
\end{thm}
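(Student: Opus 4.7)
The plan is to perturb $\phi,\psi$ to genuine $\ast$-homomorphisms $\phi',\psi': C(X)\to M_n$, view each as a multiset of $n$ points in $X$ (its joint spectrum), and construct a permutation unitary $u\in M_n$ that implements a bijection between the two multisets displacing each point by less than $4\eta$. Since $\mathcal{F}$ is finite and $X$ is compact, first pick $\eta>0$ so that $\dist(x,y)<4\eta$ implies $|f(x)-f(y)|<\epsilon/8$ for every $f\in\mathcal{F}$; this choice depends only on $(\epsilon,\mathcal{F},X)$, as the quantifier order of the statement demands.

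Given $\sigma>0$, fix a finite $\eta/4$-net $\{x_1,\ldots,x_N\}\subset X$ together with non-negative bump functions $h_i\in C(X)$ supported in $O(x_i,\eta/2)$ and equal to $1$ on $O(x_i,\eta/4)$, and set $\mathcal{H}=\{h_i\}$. Choose $\mathcal{P}\subset\underline{K}(C(X))$ large enough to pin down the ranks of the $K$-theoretic projections that arise, pick $\gamma>0$ with $3N\gamma<\sigma/4$, and select $(\delta,\mathcal{G})$ so that it forms a $KL$-triple at $\mathcal{P}$ and so that every unital $\delta$-$\mathcal{G}$-multiplicative map $C(X)\to M_n$ agrees within $\epsilon/8$ on $\mathcal{F}\cup\mathcal{H}$ with a genuine unital $\ast$-homomorphism. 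Such an approximation exists because the almost-commuting spectral projections produced from a $\delta$-$\mathcal{G}$-multiplicative map into the finite-dimensional $M_n$ can be perturbed to exactly commuting projections with small norm error.

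Identifying the perturbed maps $\phi',\psi'$ with multisets $\Lambda_\phi,\Lambda_\psi$ of $n$ points in $X$, testing the trace comparison on each $h_i$ yields $|\#(\Lambda_\phi\cap O(x_i,\eta/2))-\#(\Lambda_\psi\cap O(x_i,\eta/2))|<3\gamma n$ for every $i$. The hypothesis $\mu_{\tau\circ\phi}(O_\eta)\geq\sigma$ forces every $\eta$-ball to contain at least $(\sigma-\epsilon/8)n$ points of $\Lambda_\phi$, so a greedy ball-by-ball matching of $\Lambda_\phi$ to $\Lambda_\psi$ produces a bijection $\pi$ with $\dist(\lambda,\pi(\lambda))<4\eta$ except on a residual set of size at most $O(N\gamma n)$, and the $\sigma$-bound provides the slack needed to reassign that residue locally rather than across $X$. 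Realising $\pi$ by a permutation unitary $u\in M_n$ and invoking the modulus-of-continuity property of $\eta$ gives $\|\phi'(f)-u\psi'(f)u^*\|<3\epsilon/4$ on $\mathcal{F}$, which combined with the two $\epsilon/8$-perturbations proves the theorem.

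The main obstacle is the greedy matching step: without the lower bound $\sigma$, the residual surplus/deficit eigenvalues could be forced to travel a large distance across $X$, destroying the modulus-of-continuity estimate. The condition $\gamma\ll\sigma/N$ is precisely what allows the residue to be absorbed into the plentiful local supply guaranteed by $\sigma$. This combinatorial matching, together with the passage from $\delta$-$\mathcal{G}$-multiplicative maps to genuine homomorphisms, is the content of 4.6 of \cite{Lncd} cited in the statement.
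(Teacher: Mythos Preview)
The paper does not prove this statement; it only remarks that it ``could be proved directly but also follows from 4.6 of \cite{Lncd}.'' Your outline is in the spirit of such a direct proof, but the perturbation step contains a genuine error. You assert that for small enough $(\delta,\mathcal G)$ \emph{every} unital $\delta$-$\mathcal G$-multiplicative \morp\ $C(X)\to M_n$ is $\ep/8$-close on $\mathcal F\cup\mathcal H$ to a genuine $*$-homomorphism. This is false once $\ker\rho_{C(X)}\neq 0$: for $X=S^2$, there are $\delta$-multiplicative maps $C(S^2)\to M_n$ whose induced class on the Bott element is non-zero, and no $*$-homomorphism $C(S^2)\to M_n$ (necessarily a sum of point evaluations) can have that property, so these maps are bounded away from every homomorphism. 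The hypothesis $[\phi]|_{\mathcal P}=[\psi]|_{\mathcal P}$ only says the two maps agree with each other on $\mathcal P$, not that either agrees with a homomorphism; note that Corollary~\ref{Lijram}, which \emph{is} the perturbation statement you want, carries exactly the extra hypothesis $[\phi]|_{\mathcal P}=[H]|_{\mathcal P}$ for some homomorphism $H$, and is derived \emph{from} Theorem~\ref{Uniqq}, not conversely.

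The repair is a $K$-theoretic correction of the kind used in Lemma~\ref{102} (and throughout Section~5): first produce a small auxiliary map $L_0:C(X)\to M_k$ with $k/n$ small killing $[\phi]|_{\ker\rho_{C(X)}}$, apply your matching argument to $\phi\oplus L_0$ and $\psi\oplus L_0$, and then use the $\sigma$-density to absorb the $L_0$ summand. This is the substantive use of the measure hypothesis beyond the eigenvalue matching, and it is exactly what your sketch omits. Two smaller points: your matching paragraph needs more than ``greedy plus reassign the residue''---one must route residual points through chains of nearby eigenvalues, which is where the density $\sigma$ is genuinely spent---and for disconnected $X$ the $K_0$-part of $[\phi]|_{\mathcal P}=[\psi]|_{\mathcal P}$ is what equalises the point counts on components, so the role of $\mathcal P$ should be made explicit there rather than only in the perturbation step.
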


The following is also known and could be derived from the above.

\begin{cor}\label{Add1109-1}
Let $X$ be a compact metric space, let $\ep>0$ and let ${\cal F}\subset C(X)$ be a finite subset, there exists $\eta>0$ satisfying the following:
For any $\sigma>0,$ there exists $\gamma>0,$ a finite subset ${\cal P}\subset K_0(C(X))$ and a finite subset ${\cal H}\subset C(X)_{s.a.}$ satisfying the following:
For any two unital \hm s $\phi, \psi: C(X)\to M_n$ (for some integer $n\ge 1$)  such that
\beq\label{Add1-1}
[\phi]|_{\cal P}=[\psi]|_{\cal P},\,\,\, \mu_{\tau\circ \phi}(O_r)\ge \sigma
\eneq
for all open ball $O_r$ of $X$ with radius $r\ge \eta$ and
\beq\label{Add1-2}
|\tau\circ \phi(a)-\tau\circ \psi(a)|<\gamma\tforal a\in {\cal H},
\eneq
there exists a unitary $u\in M_n$ such that
\beq\label{Add1-3}
\|{\rm Ad}\, u\circ \phi(f)-\psi(f)\|<\ep\tforal f\in {\cal F}.
\eneq
\end{cor}

The following is a variation of Lemma 4.3 of \cite{LnApp}.
\begin{cor}\label{Lijram}
Let $X$ be a compact metric space, $\ep>0$ and ${\cal F}\subset C(X)$ be a finite
subset. There exists $\eta_1>0$ satisfying the  following:
for any $\sigma_1>0$ and any $0<\lambda<1,$ there exists $\eta_2>0$ satisfying the following:
for any $\sigma_2>0,$ there exists
 $\dt>0,$  a finite subset ${\cal G}\subset C(X)$ and a finite
subset ${\cal P}\subset \underline{K}(C(X))$ satisfying the following:

For any unital $\dt$-${\cal G}$-multiplicative \morp\,  $\phi: C(X)\to M_n$
(for some integer $n\ge 1$)  such that
\beq\label{Li-2}
[\phi]|_{\cal P}=[H]|_{\cal P}
\eneq
for some unital \hm\, $H: C(X)\to M_n$ and
such that
\beq\label{Li-1}
\mu_{\tau\circ\phi}(O_r)\ge \sigma_1\tand \mu_{\tau\circ \phi}(O_r)\ge \sigma_2
\eneq
for all open balls $O_r$ of radius $r\ge \eta_1$ and
$r\ge \eta_2,$ respectively,
there is a unital \hm\, $h: C(X)\to  M_n$ such that
\beq\label{Li-3}
\|\phi(f)-h(f)\|<\ep
\tforal f\in {\cal F}.
\eneq
 Moreover,
\beq\label{Li-4}
\mu_{tr\circ h}(O_r)\ge \lambda\sigma_1
\eneq
for all $r\ge 2\eta_1.$
\end{cor}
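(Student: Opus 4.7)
The plan is to derive Corollary \ref{Lijram} from Theorem \ref{Uniqq} by explicitly constructing a unital homomorphism $h$ whose spectral distribution approximates $\mu_{\tau\circ\phi}$ on a sufficiently fine $\eta_2$-net, and then invoking Theorem \ref{Uniqq} to unitarily conjugate $h$ close to $\phi$ on ${\cal F}$. First feed $(\epsilon,{\cal F})$ into Theorem \ref{Uniqq} to extract $\eta>0$, and set $\eta_1:=\eta$. Given $\sigma_1$ and $\lambda\in(0,1)$, apply Theorem \ref{Uniqq} once more with the parameter $\sigma:=\lambda\sigma_1$; this yields $\gamma>0$, $\delta_0>0$, and finite subsets ${\cal G}_0\subset C(X)$, ${\cal H}\subset C(X)_{s.a.}$, and ${\cal P}_0\subset\underline{K}(C(X))$.

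Next choose $\eta_2\in(0,\eta_1/2)$ small enough that (i) every $f\in{\cal H}$ oscillates by less than $\gamma/4$ on any open ball of radius $2\eta_2$, and (ii) the class $[L]|_{{\cal P}_0}$ of any direct sum of point evaluations $L=\bigoplus_i\mathrm{ev}_{x_i}^{m_i}$ is unchanged when each $x_i$ is perturbed within its $\eta_2$-ball; both conditions follow from uniform continuity of the finitely many functions in ${\cal H}$ together with local constancy of the $\underline{K}$-invariants on ${\cal P}_0$ (cf.\ Definition \ref{KKdef}). Given $\sigma_2>0$, enlarge ${\cal G}_0$, ${\cal P}_0$ and shrink $\delta_0$ to obtain $\delta>0$, ${\cal G}\supset{\cal G}_0$, ${\cal P}\supset{\cal P}_0$ guaranteeing that $[\phi]|_{\cal P}$ is well defined and that $\tau\circ\phi(f)$ for $f\in{\cal H}$ is accurately computable from any $\delta$-${\cal G}$-multiplicative map. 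Fix an $\eta_2/2$-net $\{x_1,\ldots,x_K\}\subset X$ together with a Borel partition $X=V_1\sqcup\cdots\sqcup V_K$ with $V_i\subset O(x_i,\eta_2)$.

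To produce $h$, let $m_i$ be the integer nearest $n\cdot\mu_{\tau\circ\phi}(V_i)$, adjusted so that $\sum_i m_i=n$, and set $h_0:=\bigoplus_{i=1}^K\mathrm{ev}_{x_i}^{m_i}\colon C(X)\to M_n$. Condition (i) gives $|tr(h_0(f))-\tau\circ\phi(f)|<\gamma/2$ for every $f\in{\cal H}$. If $[h_0]|_{\cal P}\neq[\phi]|_{\cal P}$, use the given $H$ with $[H]|_{\cal P}=[\phi]|_{\cal P}$ to correct $h_0$: split off a direct summand of $h_0$ of dimension at most $\tfrac{1}{2}(1-\lambda)\sigma_1 n$ and replace it by a dimension-matched direct summand of $H$ chosen so that the total $\underline{K}$-class on ${\cal P}$ becomes $[\phi]|_{\cal P}$. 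Call the result $h$. The correction affects only a small fraction of the dimension, so the trace estimate is preserved up to $\gamma$, while by construction $[h]|_{\cal P}=[\phi]|_{\cal P}$. Theorem \ref{Uniqq} then produces a unitary $u\in M_n$ with $\|\phi(f)-u^*h(f)u\|<\epsilon$ for all $f\in{\cal F}$; replace $h$ by its unitary conjugate.

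For the final tracial bound, fix any open ball $O_r$ with $r\ge 2\eta_1$; the net points $x_i$ lying in the concentric ball of radius $r-\eta_2\ge\eta_1$ contribute mass $\sum m_i/n$ within $K/n$ of $\mu_{\tau\circ\phi}$ of that ball, which is at least $\sigma_1$. After subtracting the at most $\tfrac{1}{2}(1-\lambda)\sigma_1$ of trace mass used by the K-theoretic correction, one obtains $\mu_{tr\circ h}(O_r)\ge\lambda\sigma_1$, provided $\sigma_2$ has been chosen so as to force $n$ large enough that $K/n\le\tfrac{1}{2}(1-\lambda)\sigma_1$. The main obstacle is precisely the K-theoretic correction: controlling it so as to preserve both the trace proximity to $\phi$ on ${\cal H}$ and the final spread estimate is what forces the lemma to introduce the additional parameters $\lambda$, $\eta_2$, and $\sigma_2$, along with the enlargement of the radius from $\eta_1$ to $2\eta_1$.
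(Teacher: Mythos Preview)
Your overall strategy---build $h_0=\bigoplus_i\mathrm{ev}_{x_i}^{m_i}$ with $m_i\approx n\,\mu_{\tau\circ\phi}(V_i)$, repair its $\underline K$-class using the given $H$, then invoke Theorem~\ref{Uniqq}---is the natural one, and since the paper supplies no proof here (it simply refers to Lemma~4.3 of \cite{LnApp}) there is nothing further to compare against. Two of your quantitative steps, however, are not justified.

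First, the estimate $|tr(h_0(f))-\tau\circ\phi(f)|<\gamma/2$ does not follow from condition~(i) alone: decomposing $\int f\,d\mu_{\tau\circ\phi}$ over the partition $\{V_i\}$ produces, besides the oscillation term $\le\gamma/4$, a rounding term of order $K\|f\|_\infty/n$ coming from $\sum_i|m_i/n-\mu(V_i)|$. You try to dispose of this in your last sentence by writing ``provided $\sigma_2$ has been chosen so as to force $n$ large enough'', but this misreads the quantifiers: $\sigma_2$ is \emph{given} (``for any $\sigma_2>0$''), and the hypothesis $\mu_{\tau\circ\phi}(O_{\eta_2})\ge\sigma_2$ does not by itself bound $n$ from below. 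Second, the $K$-theory correction is only asserted: you claim that replacing a summand of rank at most $\tfrac12(1-\lambda)\sigma_1 n$ by points of $H$ realises $[\phi]|_{\cal P}$, yet you give no argument that the defect $[h_0]|_{{\cal P}_0}-[H]|_{{\cal P}_0}$ is that small, nor any mechanism for selecting which points of $H$ to substitute so that the resulting class is exactly $[H]|_{{\cal P}_0}$. Both gaps are repairable---for instance by first partitioning $X$ into the finitely many clopen sets on which $[\mathrm{ev}_x]|_{{\cal P}_0}$ is constant, enlarging ${\cal P}$ to contain their characteristic projections so that the hypothesis $[\phi]|_{\cal P}=[H]|_{\cal P}$ pins down the exact point-count in each region, and treating small $n$ by a separate compactness argument---but the proof as written does not carry this out.
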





\begin{NN}\label{prem0}
{\rm
Let $C$ be a unital \CA\, and let ${\cal P}\subset \underline{K}(C)$
be a finite subset.
There is a finite subset ${\cal F}_{C, {\cal P}, {\bf b}}\subset C$ and a positive number $\dt_{C, {\cal P}, \bf b}>0$ such that
${\rm  Bott}(u,\, h)|_{\cal P}$ (see the definition 2.10 of \cite{Lnmem}) is well defined for any unital \CA\, $A,$ any unital \hm\, $h: C\to A$ and any unitary $u\in A$ for which
\beq\label{prem0-1}
\|[h(f),\, u]\|<\dt_{C,{\cal P},  {\bf b}}\tforal f\in {\cal F}_{C, {\cal P}}.
\eneq
Moreover, by choosing even smaller $\dt_{C, {\cal P}, {\bf b}},$  if $h_1: C\to A$ is another unital \hm\, and
$$
\|h(f)-h_1(f)\|<\dt_{C, {\cal P}, \bf b},
$$
then
${\rm Bott}(u,\,h_1)|_{\cal P}$ is also well defined and
$$
{\rm Bott}(u,\,h)|_{\cal P}={\rm Bott}(u,\,h_1)|_{\cal P}.
$$
As in tradition,
$$
{\rm bott}_1(u,\,h)|_{\cal P}={\rm Bott}(u,\,h)|_{{\cal P}\cap K_1(C)}\andeqn {\rm bott}_0(u,\,h)|_{\cal P}={\rm Bott}(u\, h)|_{{\cal P}\cap K_0(C)}.
$$
If $K_i(C)$ ($i=0,1$) is finitely generated, then, by choosing  ${\cal P}$ large enough, we may assume
that, when (\ref{prem0-1}) holds, ${\rm Bott}(h, \, u)$ is well defined.  Furthermore, we will write $\dt_{C, \bf b}$ instead of $\dt_{C, {\cal P}, \bf b}$ and ${\cal F}_{C, \bf b}$ instead of ${\cal F}_{C, {\cal P}, \bf b}.$

If $C=C(\T),$ let $z\in U(C(\T))$ be the standard unitary generator, one writes that
$$
{\rm bott}_1(u,\,h)={\rm bott}(u,\,h(z)).
$$
Suppose that there is a continuous path of unitaries $u(t): [0,1]\to U_0(A)$ such that
\beq\label{prem0-2}
u(0)=u,\,\,\, u(1)=1_A\andeqn \|[h(f),\, u(t)]\|<\dt_{C, {\cal P}, {\bf b}}\rforal t\in [0,1],
\eneq
then
\beq\label{prem0-3}
{\rm Bott}(u,\,h)|_{\cal P}=0.
\eneq

Now suppose that $C$ is a unital separable amenable \CA\, which is the closure of $\cup_{n=1}^{\infty} C_n,$ where $C_n=\lim_{n\to\infty} (C_{n,m},\phi_m^{(n)})$  and $K_i(C_{n,m})$ is finitely generated
$(i=0,1$). Let $z$ be the standard unitary generator of $C(\T).$ We may view
${\cal P}$ as a subset of $\underline{K}(C\otimes C(\T)).$
Let ${\cal G}_0$ be a finite subset of $C.$ Define
${\cal G}_1=\{g\otimes f: g\in {\cal G}_0\andeqn f\in {\cal S}\},$ where $S=\{1, z, z^*\}.$
Let ${\cal P}_1={\cal P}\cup \boldsymbol{\bt}({\cal P})$ (see
2.10  of \cite{Lnmem} for the definition of
$\boldsymbol{\bt}$).
Let $\dt>0.$  Suppose that $(\dt, {\cal G}_1, {\cal P}_1)$ is a $KL$-triple for $C\otimes C(\T)$ (by selecting large ${\cal G}_0$ to begin with).

By choosing even smaller $\dt_{C, {\cal P}, {\bf b}},$ we may assume that, if there is a unitary $u\in A$ such that (\ref{prem0-1}) holds, and if there is a unital $\dt$-${\cal G}_1$-multiplicative \morp\, $L: C\otimes C(\T)\to A$ such that
\beq
\|L(f\otimes 1)-h(f)\|&<&\dt_{C,{\cal P}, {\bf b}}\tforal f\in {\cal F}_{C,{\cal P}, {\bf b}}\\
\andeqn
\|u-L(1\otimes z)\|&<&\dt_{C, {\cal P}, {\bf b}},
\eneq
then
$$
{\rm Bott}(u, h)|_{\cal P}=[L]|_{\boldsymbol{\bt}({\cal P})}.
$$


}
\end{NN}

The following is a restatement of Theorem 7.4 of \cite{Lnmem}.
\begin{thm}\label{Lmem}
Let $X$ be a compact metric space. For any $\ep>0$ and any finite subset ${\cal F}\subset C(X),$ there exists $\eta>0$
satisfying the following:
For any $\sigma>0,$ there exists $\dt>0,$ a finite subset ${\cal G}\subset C(X)$  and a finite subset
${\cal P}\subset \underline{K}(C(X))$ satisfying the following:
Suppose that $\phi: C(X)\to M_n$ is a unital \hm\, such that
$$
\mu_{tr\circ \phi}(O_r)\ge \sigma
$$
for all open ball $O_r$ with radius $r\ge \eta.$ If $u\in M_n$ is a unitary such that
$$
\|[u, \phi(g)]\|<\dt\tforal g\in {\cal G}\tand {\rm Bott}(h,\, u)|_{\cal P}=0,
$$
then there exists a continuous rectifiable path of unitaries
$\{u_t: t\in [0,1]\}$ of $M_n$ such that
$$
u_0=1, u_1=1_A\tand
\|[h(f),u_t]\|<\ep
$$
for all $f\in {\cal F}$ and $t\in [0,1].$ Moreover,
$$
{\rm Length}(\{u_t\})\le 2\pi+\ep\pi.
$$
\end{thm}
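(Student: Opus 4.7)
The plan is to reduce the statement to the uniqueness theorem Theorem \ref{Uniqq} applied to the space $Y=X\times\mathbb T$. Using the hypothesis $\|[\phi(g),u]\|<\dt$, one first lifts the pair $(\phi,u)$ to a $\dt'$-${\cal G}'$-multiplicative \morp\ $L:C(X)\otimes C(\mathbb T)\to M_n$ with $L(f\otimes 1)\approx\phi(f)$ and $L(1\otimes z)\approx u$. By the last computation of paragraph \ref{prem0}, the vanishing ${\rm Bott}(\phi,u)|_{\mathcal P}=0$ then translates into $[L]|_{\boldsymbol{\bt}(\mathcal P)}=[\phi\circ\pi]|_{\boldsymbol{\bt}(\mathcal P)}$, where $\pi:C(Y)\to C(X)$ is ${\rm id}\otimes{\rm ev}_1$.

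Given $\ep$ and $\mathcal F$, apply Theorem \ref{Uniqq} to $Y$ with $\tilde{\mathcal F}=\{f\otimes z^k:f\in\mathcal F,\, k\in\{0,\pm 1\}\}$ and tolerance $\ep/8$ to obtain a parameter $\eta_Y$; set $\eta=\eta_Y/2$. The obstacle here is that the measure $\mu_{tr\circ L}$ on $Y$ may fail the spectral-distribution hypothesis of Theorem \ref{Uniqq}: although $\mu_{tr\circ\phi}$ is thick on $X$, the spectrum of $u$ may be concentrated on $\mathbb T$. To remedy this, I would construct an auxiliary honest homomorphism $H:C(Y)\to M_n$ with well-distributed mass on $Y$ and matching $KL$-class: choose $\ell$ with $2\pi/\ell<\eta_Y/4$, decompose $M_n\cong M_k\otimes M_\ell$ up to a small corner, set $H=\phi_0\otimes\lambda$ on the main summand where $\phi_0:C(X)\to M_k$ is a finite-rank homomorphism inheriting the measure bound from $\phi$ (using Corollary \ref{Lijram} if necessary) and $\lambda:C(\mathbb T)\to M_\ell$ sends $z$ to the diagonal unitary with $\ell$-th roots of unity as eigenvalues, and extend $H$ on the residual corner to evaluate at a single point of $Y$. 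A product-measure estimate then shows that $\mu_{tr\circ H}$ is bounded below on every $\eta_Y$-ball of $Y$ by a fixed fraction of $\sigma$, and $[H]|_{\boldsymbol{\bt}(\mathcal P)}=[\phi\circ\pi]|_{\boldsymbol{\bt}(\mathcal P)}$ because $\lambda(z)$ is null-homotopic in $U(M_\ell)$.

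Applying Theorem \ref{Uniqq} to the pair $(L,H)$ now yields a unitary $w\in M_n$ with $\|L(g)-{\rm Ad}(w)\circ H(g)\|<\ep/8$ for $g\in\tilde{\mathcal F}$. The required path $\{u_t\}$ is built by concatenation: the diagonal homotopy ${\rm diag}(1,e^{2\pi it/\ell},\ldots,e^{2\pi it(\ell-1)/\ell})$ in $U(M_\ell)$ has length $2\pi(\ell-1)/\ell<2\pi$ and commutes exactly with $\phi_0$, so conjugating by $w$ produces a path in $M_n$ from $1$ to ${\rm Ad}(w)(1\otimes\lambda(z))$ that $\ep/4$-almost commutes with $\phi(\mathcal F)$ throughout; a short endpoint correction handles the unitary $u\cdot {\rm Ad}(w)(1\otimes\lambda(z))^*$, which is within $3\ep/8$ of $1$ and hence equals $e^{ih}$ for a self-adjoint $h$ with $\|h\|<\ep\pi/2$, giving a prepended path of length $<\ep\pi/2$ from ${\rm Ad}(w)(1\otimes\lambda(z))$ to $u$.

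The main difficulty throughout is maintaining $\|[\phi(f),u_t]\|<\ep$ uniformly in $t$, not just at the endpoints. Along the main $2\pi$-segment this is automatic from the exact commutation of the underlying $M_\ell$-path with $\phi_0\otimes 1$, combined with ${\rm Ad}(w)\circ H(f\otimes 1)\approx_{\ep/4}\phi(f)$. Along the short logarithmic correction it must be enforced by choosing $\dt$ small enough that the functional calculus producing $h$ depends continuously on $\phi$; this is the step controlling how the constants $\dt$, $\mathcal G$, and $\mathcal P$ must ultimately be selected relative to $\ep$, $\mathcal F$, and $\sigma$.
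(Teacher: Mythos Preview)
The paper itself gives no proof of Theorem~\ref{Lmem}: it is simply quoted as a restatement of Theorem~7.4 of \cite{Lnmem} and used as a black box. So your proposal is being measured against that memoir, not against anything argued in the present paper.

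Your reduction to $Y=X\times\T$, the lifting of $(\phi,u)$ to an approximately multiplicative $L:C(Y)\to M_n$, and the translation of ${\rm Bott}(\phi,u)|_{\cal P}=0$ into a $KL$-condition on $L$ are all correct first moves. The gap is in the appeal to Theorem~\ref{Uniqq} for the pair $(L,H)$. That theorem carries three hypotheses: the $KL$-matching, a measure lower bound on one of the two maps, and the trace condition $|tr(L(a))-tr(H(a))|<\gamma$ for all $a$ in a prescribed finite subset of $C(Y)_{s.a.}$. You verify the first two but never the third, and it fails in general. Indeed
\[
tr\big(H(1\otimes z)\big)\approx \frac{k\ell}{n}\,tr_{M_\ell}\big(\lambda(z)\big)=0,
\qquad
tr\big(L(1\otimes z)\big)\approx tr(u),
\]
and $tr(u)$ can be any point of the closed unit disk (take $u$ a scalar, which satisfies every hypothesis of the theorem trivially). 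More generally the $\T$-marginal of $\mu_{tr\circ L}$ is the spectral measure of $u$, which is entirely unconstrained by the hypotheses, while the $\T$-marginal of $\mu_{tr\circ H}$ is uniform on the $\ell$-th roots of unity. No product-form $H=\phi_0\otimes\lambda$ repairs this: the discrepancy is already visible on $1\otimes C(\T)$ and is independent of the choice of $\phi_0$.

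This is not a technicality. The tension between ``$H(1\otimes z)$ has spread-out spectrum'' (needed for the measure hypothesis of Theorem~\ref{Uniqq} on $Y$) and ``$H$ is trace-close to $L$'' is genuine, and your use of the Bott hypothesis---only in the $KL$-matching step---does nothing to resolve it. The argument in \cite{Lnmem} does not attempt a single global comparison on $Y$; it instead uses the measure hypothesis on $X$ to exhibit high spectral multiplicity inside $\phi$, then invokes the Bott vanishing to control winding numbers of the resulting blocks of $u$ and assembles the path blockwise.
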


\section{ Almost multiplicative maps from $C(X)$ into interval algebras}

\begin{lem}\label{Bbot}
Let $X$ be a compact metric space.  For any $\ep>0,$ any finite subset ${\cal F}\subset C(X)$ and any finite subset
${\cal P}\subset \underline{K}(C(X)),$
there exists a finite subset $\{s_1, s_2,...,s_{k(X)}\}\subset K_1(C(X))$ and $\eta>0$
satisfying the following:
For  any
$1>\sigma>0,$ there exists $d>0,$ for  any
$\af\in KL(C(X)\otimes C(\T), \C)=Hom_{\Lambda}(\underline{K}(C(X)\otimes C(\T), \underline{K}(\C))$
and
for any unital \hm\, $\phi: C(X)\to M_n$ for some integer $n\ge 1$ for which
\beq\label{Bbot-1}
\mu_{tr\circ \phi}(O_r)\ge \sigma
\eneq
for any open balls $O_r$ with radius $r\ge \eta,$  where
$tr$ is the normalized trace on $M_n,$ and
\beq\label{Bot-1+}
\max\{|\af\circ \bt^{(1)}(s_i)|: 1\le i\le k(X)\}/n<d,
\eneq
there exists a unitary $u\in M_n$ such that
\beq\label{Bbot-3}
\|[\phi(f),\, u]\|<\ep\tforal f\in {\cal F}
\tand {\rm Bott}(\phi, \, u)|_{{\cal P}}=
\af|_{\boldsymbol{\bt}({\cal P})}.
\eneq
\end{lem}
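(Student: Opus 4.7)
My plan is to produce $u$ in three stages: a spectral discretization of $\phi$, an explicit winding construction on the discretized model, and an extension of the Bott identity from the prescribed generators of $G$ to all of $\mathcal{P}$.

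First, I would choose $\eta$ small enough that two points of $X$ within distance $\eta$ give values differing by less than $\epsilon/4$ on every $f\in\mathcal{F}$. With the measure bound $\mu_{tr\circ\phi}(O_r)\geq\sigma$ for every ball of radius $r\geq\eta$ in hand, the spectral-truncation arguments of \cite{Lnmem} (together with Theorem \ref{Uniqq}) allow me to replace $\phi$, up to unitary conjugation and an $\epsilon/4$-perturbation on $\mathcal{F}$, by a finite-spectrum homomorphism $\phi_0=\bigoplus_{i=1}^N m_i\cdot\operatorname{ev}_{x_i}$ whose spectral points form an $\eta$-net in $X$, with multiplicities bounded below by a constant multiple of $\sigma n/N$. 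It then suffices to build $u\in M_n$ with $\|[\phi_0(f),u]\|<\epsilon/2$ for $f\in\mathcal{F}$ and $\operatorname{Bott}(\phi_0,u)|_\mathcal{P}=\alpha|_{\boldsymbol{\beta}(\mathcal{P})}$.

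Second, for each generator $s_j$ of $G$ I would fix a unitary representative $v_j\in M_{k_j}(C(X))$ with $[v_j]=s_j$ and set $n_j:=\alpha(\boldsymbol{\beta}(s_j))\in K_0(\mathbb{C})=\mathbb{Z}$. The bound $|n_j|<dn$ lets me realize $n_j$ as $\operatorname{bott}_1(\phi_0(v_j),u\otimes 1_{k_j})$ via a Berg-type construction: pick a discrete loop $x_{i_1},\dots,x_{i_{L_j}}$ in $X$ along which $v_j$ has holonomy $s_j$, let $P_j$ be the sum of a fixed multiplicity $t_j$ of minimal diagonal projections of $\phi_0$ at those points, and take $u$ to act on $P_j M_n P_j$ as a cyclic shift weighted by a root of unity producing winding exactly $n_j$. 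The condition $d\ll\sigma$ ensures $t_j$ is large enough to accommodate $|n_j|$ and that the subspaces for different generators may be chosen disjointly, so the shifts do not interfere. Since consecutive spectral points of each loop are within $O(\eta)$ of each other, the commutator $\|[\phi_0(f),u]\|$ stays below $\epsilon/2$ on $\mathcal{F}$.

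Finally, to upgrade the matching from the generators of $G$ to the full set $\mathcal{P}$, I would enlarge $G$ at the outset to contain generators for the subgroup of $K_1(C(X))$ detected by $\mathcal{P}$ as well as for the torsion-coefficient pieces $K_1(C(X);\mathbb{Z}/k)$ in $\mathcal{P}$ that arise via Bockstein operations; because $K_1(\mathbb{C})=0$ and the torsion-coefficient $K$-theory of $\mathbb{C}$ is controlled by the Bockstein from $K_0(\mathbb{C})$, $\operatorname{Bott}(\phi_0,u)$ and $\alpha\circ\boldsymbol{\beta}$ automatically agree on the $K_0(C(X))$-part of $\mathcal{P}$. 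The main obstacle is the combinatorial engineering of the winding shifts in the second stage: they must simultaneously realize the prescribed integers for every $s_j$ while preserving the commutator bound on $\mathcal{F}$, which forces a careful choice of $d$ depending on $\sigma$, $\mathcal{F}$, and the chosen representatives $v_j$, in agreement with the quantifier order of the lemma.
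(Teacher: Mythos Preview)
Your approach differs substantially from the paper's and has a real gap in its second stage.

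The paper does not attempt an explicit winding construction. Instead it passes to a finite CW complex $Y_m$ approximating $X$ and invokes Lemma~10.2 of \cite{LnApp}: that abstract existence result produces, for any prescribed $\beta \in KK(C(Y_m)\otimes C(\mathbb{T}), \mathbb{C})$, a $\delta/4$-$\mathcal{G}_1'$-multiplicative map $\Phi: C(Y_m)\otimes C(\mathbb{T}) \to M_L$ realizing $\beta$, with $L$ bounded by $N(\delta/4,\mathcal{G}_1',\mathcal{P}')\cdot k$ where $k=\max|\alpha(s_j)|$. From $\Phi$ one reads off a pair $(\phi_0', u_0)$ in $M_L$ already carrying the desired Bott data on all of $\mathcal{P}$. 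The paper then simply excises $L$ rank-one summands from the point-evaluation form of $\phi$, inserts $\phi_0'$ in their place to get $\phi_1$, and applies Theorem~\ref{Uniqq} to conjugate $\phi_1$ back to $\phi$ within $\epsilon$; the unitary $u$ is $w^*({\rm diag}(1,\dots,1,u_0))w$. The constant $d$ is just $\min\{\sigma/2,\gamma\}/N(\delta/4,\mathcal{G}_1',\mathcal{P}')$, and no hand-built winding is needed.

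Your Berg-type step, by contrast, relies on a notion of ``holonomy of $v_j$ along a discrete loop in $X$,'' and this is not available in general. For $X=S^1$ and $v_j=z$ the picture is clear, but for an arbitrary compact metric space there is no loop structure to exploit, and a class in $K_1(C(X))$ need not arise from any circle map: already for $X=S^3$ the generator of $K_1(C(S^3))\cong\mathbb{Z}$ lives in $M_2(C(S^3))$ and is invisible to $[S^3,S^1]=0$, so no choice of $\eta$-close spectral points arranged in a ``loop'' will detect it. Even after a CW approximation this problem persists for higher-dimensional cells. The paper bypasses the issue entirely by treating the existence of $(\phi_0',u_0)$ as a black box resting on the UCT rather than on any geometric model. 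Your third stage is also too thin: matching $\alpha$ on the torsion-coefficient pieces of $\mathcal{P}$ is not automatic from matching ${\rm bott}_1$ on free generators, and a cyclic-shift $u$ tailored only to the latter gives you no control over $K_*(C(X);\mathbb{Z}/k)$.
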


(Note that, here $\bt^{(1)}: K_1(C(C(X))\to K_0(C(X)\otimes C(\T)))$ and ${\boldsymbol \bt}:
\underline{K}(C(X))\to \underline{K}(C(X)\otimes C(\T))$ are as defined in 2.10 of \cite{Lnmem}.)
\begin{proof}
Let $\ep>0,$  ${\cal F}\subset C(X)$ and ${\cal P}\subset \underline{K}(C(X))$  be  finite subsets.
Let $\ep_1=\min\{\ep/2, \dt_{C(X), {\cal P}, {\bf b}}\}$ and
let ${\cal F}_1={\cal F}\cup {\cal F}_{C(X), {\cal P}, {\bf b}}.$


Let
$\eta'>0$ (in place of $\eta$) be given by \ref{Uniqq} associated with
$\ep/16$ (in place of $\ep$) and ${\cal F}.$
Let $1>\sigma>0.$ Let $\gamma>0,$ $\dt>0,$ ${\cal G},$ ${\cal P}_1\subset \underline{K}(C(X))$ 
(in place of ${\cal P}$)  and ${\cal H}\subset C(X)$ be given by \ref{Uniqq} associated with the above $\ep/16$ (in place of $\ep$),
$\eta_1>0$ (in place of $\eta$) and $\sigma/2.$ We may assume that ${\cal P}\subset {\cal P}_1.$ 

For convenience, we may assume that ${\cal H}\cup {\cal F}\subset {\cal G}.$
We may assume that $\dt<\min\{\ep/2,1/4\},$  $\|g\|\le 1$ if $g\in {\cal G}$ and $1_{C(X)}\in {\cal G}.$

Let ${\cal G}_1=\{g\otimes f: g\in {\cal G}\andeqn f=1, z, z^*\}\subset
C(X)\otimes C(\T),$ where  $z$ is the identity
function on the unit circle.
We may also assume that $(\dt, {\cal G}_1, {\cal P}_1)$ is a $KL$-triple for $C(X)\otimes C(\T).$
Moreover, we may assume that $\dt<\dt_{C(X), {\cal P}, \bf{b}}$ and
${\cal G}\supset {\cal F}_{C(X),\bf{b}}.$

Suppose that $C(X)=\lim_{n\to\infty} C(Y_n),$ where each $Y_n$ is a finite CW complex.  Let $\imath_m: C(Y_m)\to C(X)$ be the unital \hm\, induced by the inductive limit system. We may assume that $\imath_m$ is injective (see \cite{Mar}).
Let $s: X\to Y_m$ be the surjective map such that
$\imath(f)(x)=f(s(x))$ for all $f\in C(Y_m)$ and $x\in X.$
By choosing a large $m,$ we may assume that there is a finite subset ${\cal G}'\subset C(Y_m)$ and there is a finite subset ${\cal P}'\subset \underline{K}(C(Y_m))$ such that $\imath_m({\cal G}')\supset {\cal G} $ and
$[\imath_m]({\cal P}')\supset {\cal P}.$
Let ${\cal G}'_1=\{g\otimes f: g\in {\cal G}'\andeqn f=1, z, z^*\}.$
We may further assume that $(2\dt, {\cal G}')$ is a $KK$-pair for
$C(Y_m)\otimes C(\T).$

Let $\{s_1', s_2',...,s_{k(X)}'\}\subset K_1(C(Y_m))$ be a generating subset.
Let $s_i=({\imath}_m)_{*1}(s_i),$ $i=1,2,...,k(X).$

Suppose that $Y_m$ is the disjoint union of finitely many connected CW complexes $Z_1, Z_2,...,Z_l.$ Without loss of generality, we may assume that there is, for each $i,$ a finite subset ${\cal G}^{(i)}\subset C(Z_i)$ such that $\oplus_{i=1}^l {\cal G}^{(i)}={\cal G}'$ and
there is a finite subset ${\cal P}_i'\subset \underline{K}(C(Z_i))$ such that $\oplus_{i=1}^l {\cal P}_i'={\cal P}'.$
Choose $\xi_i\in Z_i$ such that $\xi_i\in Y_m,$ $i=1,2,...,l.$ We assume also that
each $Z_i$ contains an open ball with radius $r\ge \eta_0>0.$ Put $\eta=\min\{\eta_0/2, \eta'\}.$ 

Let $N(\dt/4, {\cal G}_1^{(i)}, {\cal P}_i')$ be given by  Lemma 10.2 of \cite{LnApp}  for $C(\T\times Z_i).$
Define
$$
N(\dt/4, {\cal G}_1', {\cal P}')=\sum_{i=1}^l N(\dt/4, {\cal G}_1^{(i)}, {\cal P}_i').
$$

Let
$$
d=\min\{\sigma/2, \gamma\}\cdot {1\over{N(\dt/4, {\cal G}_1', {\cal P}')}}.
$$


Let $\af$ be as in the statement and let
$$
k=\max\{|\af\circ \bt^{(1)}(s_i)|: i=1,2,...,k(X)\}.
$$

Now suppose that $\phi: C(X)\to M_n$ for some integer $n\ge 1$
for which
\beq\label{Bbot-6}
\mu_{tr\circ \phi}(O_r)\ge \sigma
\eneq
for all open balls $O_r$ with radius $r\ge\eta,$ where
$tr$ is the normalized tracial state on $M_n$ and
$$
k/n<d.
$$
We may write
that
\beq\label{Bbot-7}
\phi(f)=\sum_{i=1}^n f(\zeta_i)p_i\tforal f\in C(X),
\eneq
where $\{p_1, p_2,...,p_n\}$ is a set of mutually orthogonal rank one
projections and $\zeta_i\in X,$ $i=1,2,...,n.$
Put $L=kN(\dt, {\cal G}_1', {\cal P}').$ Note $n>L.$

Note that, if  $x\in {\rm ker}\rho_{C(Y_m)},$ then $h_{*0}(x)=0$ for any \hm\, $h: C(Y_m)\to M_n$
(for any integer $n\ge 1$).
Define $H: C(Y_m)\to M_L$ by $H(f)=\sum_{i=1}^L f(s(\zeta_i))p_i$ for $f\in C(Y_m).$ 

Choose $\bt\in
Hom_{\Lambda}(\underline{K}(C(Y_m)\otimes C(\T)), \underline{K}(\C))$ defined as
\beq\label{Bbot4-1}
\bt|_{\underline{K}(C(Y_m))}=[H]
\eneq
and
\beq\label{Bbot4-2}
\bt|_{{\boldsymbol{\bt}}(\underline{K}(C(Y_m)))}=
\af\circ [\imath_m]|_{{\boldsymbol{\bt}}(\underline{K}(C(Y_m)))}
\eneq
(see 2.10  of \cite{Lnmem} for the definition of $\boldsymbol{\bt}$).
Let ${\cal G}_1=\{g\otimes 1: g\in {\cal G}\}\cup\{z_1\}\subset
C(Y_m)\otimes C(\T),$ where $z_1=1\otimes z$ and $z$ is the identity
function on the unit circle.

It follows from Lemma 10.2 of \cite{LnApp} (and considering each component of $Y_m$ separately)
 that there exists a unital $\dt/4$-${\cal G}_1'$-multiplicative
\morp\, $\Phi': C(Y_m)\otimes C(\T)\to M_L$
such that
\beq\label{Bbot-5}
[\Phi']=\beta,
\eneq
One obtains a
$\dt/2$-${\cal G}$-multiplicative \morp\, $\Phi: C(X)\otimes C(\T)\to M_L$ such that
\beq\label{Bot-5-1}
\|\Phi(\imath_m(g)\otimes f)-\Phi'(g\otimes f)\|<\dt/2\tforal g\in {\cal G}'' \andeqn f\in \{1, z, z^*\},
\eneq
where ${\cal G}''\subset {\cal G}$ such that $\imath_m(g)\in {\cal G}.$ 
Furthermore
\beq\label{Bot-5+1}
[\Phi]|_{{\cal P}_1}&=&[H_1 ]|_{{\cal P}_1}\,\,\,{\rm (} {\cal P}_1\subset \underline{K}(C(X)) {\rm )}\\\andeqn
[\Phi]|_{\boldsymbol{\bt}({\cal P})}&=&\af|_{\boldsymbol{\bt}({\cal P})},
\eneq
where $H_1: C(X)\to M_L$ is  defined by $H_1(f)=\sum_{i=1}^L f(\zeta_i)p_i$ for $f\in C(X).$ 

Define $\phi_0': C(X)\to M_L$ by $\phi_0'(f)=\Phi(f\otimes 1_{C(\T)})$ for all
$f\in C(X).$
Define
$$
u_0=\Phi(1_{C(X)}\otimes z)(\Phi(1_{C(X)}\otimes z^*)\Phi(1_{C(X)}\otimes z))^{1/2}=\langle \Phi(1_{C(X)}\otimes z)\rangle.
$$
Then
$$
\|u_0-\Phi(1_{C(X)}\otimes z)\|<\dt_{C(X), {\cal P}, {\bf b}}.
$$


%
Define $\phi': C(X)\to M_{n-L}$ defined by
\beq\label{Bbot-8}
\phi'(f)=\sum_{i=L+1}^{n}f(\zeta_i)p_i\tforal f\in C(X).
\eneq
Define $\phi_1: C(X)\to M_n$ by
\beq\label{Bbot-9}
\phi_1(f)=\phi'(f)\oplus \phi_0(f)\tforal f\in C(X).
\eneq
Since $k/n<d\le \gamma (k/L),$ $L/n<\gamma.$ Therefore
one computes that
\beq\label{Bbot-10}
[\phi_1]|_{{\cal P}_1}=[\phi_2]|_{{\cal P}_1}\andeqn\\
|\tau\circ \phi(g)-\tau\circ \phi_1(f)|&<&\gamma
\tforal g\in {\cal H}.
\eneq
Moreover, since  $k/n<d\le (\sigma/2)k/L,$ $L/n<\sigma/2.$
Therefore, by (\ref{Bbot-6}),
$$
\mu_{tr\circ \phi_1}(O_r)\ge \sigma/2\andeqn \mu_{tr\circ \phi}(O_r)\ge \sigma/2
$$
for all $r\ge \eta.$

It follows from \ref{Uniqq} and \ref{Add1109-1} that there is a  unitary $w_1\in M_n$ such that
\beq\label{Bbot-11}
\|{\rm Ad}\, w_1\circ \phi_1(f)-\phi(f)\|<\ep/4
\tforal f\in {\cal F}.
\eneq
Put
\beq\label{Bbot-12}
u=w^*({\rm diag}(\overbrace{1,1,...,1}^{n-L}, u_0))w.
\eneq
One checks that this unitary $u$ meets all the requirements.

\end{proof}

The following is a folklore.

\begin{lem}\label{1keeptrace}
Let $X$ be a compact metric space, let $\eta_i>0$ and $\sigma_i>0$
($i=1,2,...,m$) with $\eta_1>\eta_2>\cdots >\eta_m$ and
$\sigma_1>\sigma_2>\cdots >\sigma_m,$ and let $0<\lambda_1, \lambda_2<1.$
There exists $\dt>0$ and a finite subset ${\cal G}\subset C(X)$ satisfying the following:

Suppose that $A$ is a unital \CA\, with $T(A)\not=\emptyset$ and suppose that $\phi, \psi: C(X)\to A$ are two unital positive linear maps such that
\beq\label{1keep-1-0}
\mu_{\tau\circ \phi}(O_r)\ge \sigma_j
\eneq
for all $r\ge \eta_j,$ $j=1,2,...,m,$ and
\beq\label{1keep-1}
|\tau\circ \phi(g)-\tau\circ \psi(g)|<\dt\tforal g\in {\cal G}.
\eneq
for all $\tau\in T(A).$
Then,
$$
\mu_{\tau\circ \psi}(O_r)\ge \lambda_1\sigma_j
$$
for all $r\ge 2(1+\lambda_2)\eta_j,$ $j=1,2,...,m,$ and for all
$\tau\in T(A).$
\end{lem}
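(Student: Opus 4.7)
The plan is to construct ${\cal G}$ as a finite collection of continuous bump functions that play the role of indicators of balls at each scale $\eta_j,$ and then use the trace-closeness hypothesis (\ref{1keep-1}) to transfer the measure estimate (\ref{1keep-1-0}) from $\mu_{\tau\circ \phi}$ to $\mu_{\tau\circ \psi}.$

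First, for each $j=1,2,\dots,m,$ using compactness of $X,$ I would fix a finite subset $F_j=\{x_1^{(j)},\dots,x_{N_j}^{(j)}\}\subset X$ with $X=\bigcup_{i} O(x_i^{(j)},(1+\lambda_2)\eta_j).$ For each pair $(i,j)$ I would pick, by a Urysohn/distance-function construction, a function $g_{i,j}\in C(X)$ with $0\le g_{i,j}\le 1,$ $g_{i,j}\equiv 1$ on $O(x_i^{(j)},\eta_j),$ and $g_{i,j}\equiv 0$ off $O(x_i^{(j)},(1+\lambda_2)\eta_j).$ I would then take ${\cal G}=\{g_{i,j}\}_{i,j}$ and $\delta=(1-\lambda_1)\sigma_m.$

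Given $\tau\in T(A),$ an index $j,$ and an open ball $O(y,r)$ with $r\ge 2(1+\lambda_2)\eta_j,$ the covering property of $F_j$ yields an index $i$ with $d(x_i^{(j)},y)<(1+\lambda_2)\eta_j;$ for any $z\in O(x_i^{(j)},(1+\lambda_2)\eta_j)$ the triangle inequality gives $d(z,y)<2(1+\lambda_2)\eta_j\le r,$ so $O(x_i^{(j)},(1+\lambda_2)\eta_j)\subset O(y,r).$ The sandwich inequalities $\chi_{O(x_i^{(j)},\eta_j)}\le g_{i,j}\le \chi_{O(y,r)}$ combined with (\ref{1keep-1-0}) and (\ref{1keep-1}) then give
$$
\mu_{\tau\circ\psi}(O(y,r))\ge \tau\circ\psi(g_{i,j})\ge \tau\circ\phi(g_{i,j})-\delta\ge \sigma_j-\delta\ge \lambda_1\sigma_j,
$$
as required, where the penultimate step uses $\sigma_m\le \sigma_j.$

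The argument is routine; the one point of care is that the finite set ${\cal G}$ must be chosen once and for all, independent of the ball $O(y,r),$ which is precisely the reason for introducing the finite $(1+\lambda_2)\eta_j$-dense net rather than bump functions centered at $y,$ and is the source of the enlarged conclusion radius $2(1+\lambda_2)\eta_j$ versus the hypothesis radius $\eta_j.$
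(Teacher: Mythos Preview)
Your proof is correct and follows essentially the same bump-function argument as the paper. The only cosmetic differences are that the paper first reduces to the case $m=1$ (and then takes the minimum of the $\delta$'s and the union of the ${\cal G}$'s, which amounts to your choice $\delta=(1-\lambda_1)\sigma_m$), and that the paper uses an $\eta_j$-net rather than a $(1+\lambda_2)\eta_j$-net; both nets give the needed inclusion $O(x_i^{(j)},(1+\lambda_2)\eta_j)\subset O(y,r)$ when $r\ge 2(1+\lambda_2)\eta_j$.
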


\begin{proof}
To simplify the proof, without loss of generality, we will prove only for the case that $m=1.$ The general case follows by taking minimum of $m$ $\dt$'s and  the union
of $m$ ${\cal G}'s.$

There are $x_1,x_2,...,x_K\in X$ such that
$$
\cup_{k=1}^K O(x_k,{\eta})\supset X.
$$
There are $f_1,f_2,...,f_K\in C(X)$ with $0<f_k\le 1$ such that
$f_k(x)=1$ if $x\in O(x_k,{\eta})$ and
$f_k(x)=0$ if ${\rm dist}(x, x_k)>(1+\lambda_2)\eta.$
Choose $\dt=(1-\lambda_1)\sigma_1$ and ${\cal G}=\{f_1,f_2,...,f_K\}.$

Now suppose that $\phi, \psi: C(X)\to A$ are two unital positive linear maps which satisfy the assumption  (\ref{1keep-1-0}) and (\ref{1keep-1}).

Let $x\in X$ and consider $O(x,r)$ for some $r\ge 2(1+\lambda_2)\eta.$
Then there exists $x_k$ such that
${\rm dist}(x, x_k)<\eta.$ This implies that
$$
O(x_k,(1+\lambda_2)\eta)\subset O(x,r).
$$
Thus
\beq
\mu_{\tau\circ \psi}(O(x,r))
&\ge &\tau\circ \psi(f_k)> \tau\circ \phi(f_k)-(1-\lambda_1)\sigma_1\\
&\ge &  \mu_{\tau\circ \phi}(O(x_k,\eta))-(1-\lambda_1)\sigma_1\\
&\ge & \lambda_1\sigma_1.
\eneq
for all $\tau\in T(A).$

\end{proof}

\begin{rem}
{\rm Note that in the above lemma, we insist that $\dt$ and ${\cal G}$ do not depend on $\phi.$
Otherwise one can have better estimates.}
\end{rem}

\begin{lem}\label{keeptrace}
Let $X$ be a compact metric space, let $\Delta: (0,1)\to (0,1)$ be a nondecreasing function, let $\eta>0$ and let $0<\lambda_1, \lambda_2<1.$
There exists $\dt>0$ and a finite subset ${\cal G}\subset C(X)$ satisfying the following:

Suppose that $A$ is a unital \CA\, with $T(A)\not=\emptyset$ and suppose that $\phi, \psi: C(X)\to A$ are two unital positive linear maps such that
\beq\label{Keep-1}
\mu_{\tau\circ \phi}(O_r)\ge \Delta(r)
\eneq
for all $r\ge \eta$ and
\beq\label{Keep-2}
|\tau\circ \phi(g)-\tau\circ \psi(g)|<\dt\tforal g\in {\cal G}.
\eneq
Then,
$$
\mu_{\tau\circ \psi}(O_r)\ge \lambda_1\Delta(r/2(1+\lambda_2))
$$
for all $r\ge 2(1+\lambda_2)\eta.$
\end{lem}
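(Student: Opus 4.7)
The strategy is to reduce to Lemma \ref{1keeptrace} by discretizing the nondecreasing function $\Delta$ along a geometric grid of radii chosen so that the gap between successive scales converts a $(1+\lambda_2^\ast)$-expansion into a $(1+\lambda_2)$-expansion.

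First, choose $\lambda_2^\ast\in(0,\lambda_2)$ and set $\rho=(1+\lambda_2)/(1+\lambda_2^\ast)>1$. Let $D={\rm diam}(X)<\infty$ (since $X$ is compact), pick the least integer $m\ge 1$ with $\rho^{m-1}\eta\ge D$, and define $\eta_j=\rho^{m-j}\eta$ for $j=1,\dots,m$, so that $\eta_1\ge D$, $\eta_m=\eta$, and $\eta_j=\rho\,\eta_{j+1}$. Set $\sigma_j=\Delta(\eta_j)$, and (after a standard downward perturbation to enforce strict monotonicity) apply Lemma \ref{1keeptrace} to these pairs $\{(\eta_j,\sigma_j)\}$ with parameters $\lambda_1$ and $\lambda_2^\ast$ (in place of $\lambda_2$) to produce the desired $\delta>0$ and finite ${\cal G}\subset C(X)$.

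Now suppose $\phi,\psi$ satisfy the hypotheses of the current lemma. Since $\Delta$ is nondecreasing, for every $r\ge\eta_j$ we have $\mu_{\tau\circ\phi}(O_r)\ge\Delta(r)\ge\Delta(\eta_j)=\sigma_j$, which is exactly the hypothesis needed by Lemma \ref{1keeptrace}. Its conclusion therefore gives
\[
\mu_{\tau\circ\psi}(O_r)\ge\lambda_1\sigma_j\quad\text{for all } r\ge 2(1+\lambda_2^\ast)\eta_j,\ j=1,\dots,m.
\]
Given an arbitrary $r\ge 2(1+\lambda_2)\eta$, set $s=r/2(1+\lambda_2)\ge\eta$. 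If $s\ge\eta_1\ge D$, then $O_r\supseteq X$ and $\mu_{\tau\circ\psi}(O_r)=1\ge\lambda_1\Delta(s)$ trivially, since $\Delta<1$. Otherwise let $j$ be the largest index with $\eta_j\ge s$; such $j$ exists because $s\ge\eta=\eta_m$. If $j<m$ then $s>\eta_{j+1}$, so the geometric spacing yields
\[
r=2(1+\lambda_2)s>2(1+\lambda_2)\eta_{j+1}=2(1+\lambda_2^\ast)\eta_j,
\]
while if $j=m$ one has directly $r=2(1+\lambda_2)\eta>2(1+\lambda_2^\ast)\eta_m$. In either case,
\[
\mu_{\tau\circ\psi}(O_r)\ge\lambda_1\sigma_j=\lambda_1\Delta(\eta_j)\ge\lambda_1\Delta(s),
\]
using $\eta_j\ge s$ and monotonicity of $\Delta$.

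The crux is the choice $\rho=(1+\lambda_2)/(1+\lambda_2^\ast)$, which is precisely what converts the coarser factor $(1+\lambda_2^\ast)$ appearing in the conclusion of Lemma \ref{1keeptrace} into the sharper $(1+\lambda_2)$ demanded here at radii between grid points. The only minor technical wrinkle is the strict ordering $\sigma_1>\cdots>\sigma_m$ required by Lemma \ref{1keeptrace}; this is easily arranged by a small downward perturbation of the $\sigma_j$ (absorbed into $\lambda_1$ by an arbitrarily small loss), and so does not pose a real obstacle.
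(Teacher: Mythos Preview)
Your proof is correct and follows essentially the same approach as the paper: discretize the radii along a geometric grid, apply Lemma~\ref{1keeptrace} with a slightly smaller parameter $\lambda_2^\ast$ (the paper calls it $\lambda_0$), and use the geometric spacing $\rho=(1+\lambda_2)/(1+\lambda_2^\ast)$ to convert the conclusion back to the desired $(1+\lambda_2)$ factor. Your version is in fact a bit more careful than the paper's, since you explicitly handle the large-radius case via the diameter and flag the strict-monotonicity requirement on the $\sigma_j$.
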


\begin{proof}
Let $\eta>0,$ $\Delta$ and  $0<\lambda_1, \lambda_2<1$ be given.
Choose $\lambda_0>0$ such that $0<\lambda_0<\lambda_2.$
Let $1>r_1>r_2>\cdots >r_N>0$ such that $\eta>r_N$ and
$$
r_{i+1}/r_i>{1+\lambda_0\over{1+\lambda_2}}, \,\,\,i=1,2,....,N-1.
$$
Put $\eta_j=r_j$ and $\sigma_j=\Delta(\eta_j),$ $j=1,2,...,N-1.$

Let $\dt>0$ and ${\cal G}$  be required by \ref{1keeptrace}
for $\eta_j$ and $\sigma_j$ ($j=1,2,...,N$),$ \lambda_1$ and $\lambda_2.$

Now suppose that
$\phi, \psi$ satisfy (\ref{Keep-1}) and (\ref{Keep-2}).
By applying \ref{1keeptrace}, we conclude that
$$
\mu_{\tau\circ \psi}(O_r)\ge \lambda_1\sigma_j
$$
for all $\tau\in T(A)$ and all $r\ge 2(1+\lambda_0)\eta_j,$
$j=1,2,...,N.$

Now suppose that $r\ge 2(1+\lambda_2)\eta>2(1+\lambda_0)\eta.$
Then
$$
{r\over{2(1+\lambda_0)}}>\eta.
$$
We may assume that, for some $j,$
$$
\eta_j>{r\over{2(1+\lambda_0)}}>\eta_{j+1}.
$$
Then
\beq
\mu_{\tau\circ \psi}(O_r) &>& \lambda_1\sigma_{j+1}=\lambda_1\Delta(\eta_{j+1})\\
&\ge & \lambda_1\Delta(\eta_j({1+\lambda_0\over{1+\lambda_2}}))\\
&\ge & \lambda_1\Delta({r\over{2(1+\lambda_2)}})
\eneq
for all $\tau\in T(A).$

\end{proof}

\begin{lem}\label{easy}
Let $u\in CU(M_n(C([0,1]))$ be a unitary such that
\beq\label{Le-1}
\|u(0)u(t)^*-1\|<1\tforal t\in [0,1].
\eneq
Suppose that $u(0)u(1)^*=\exp(\sqrt{-1}h)$ with
$\|h\|<2\arcsin (1/2).$
Then
$$
{\rm Tr}(h)=0.
$$
\end{lem}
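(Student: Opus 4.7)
The plan is to combine two observations. First, membership in $CU(M_n(C([0,1])))$ forces the pointwise determinant $t\mapsto \det u(t)$ to be identically $1$, which will pin down $\Tr(h)$ modulo $2\pi\Z$. Second, the hypothesis $\|u(0)u(t)^*-1\|<1$ lets one take a continuous principal logarithm along the entire path $t\mapsto u(0)u(t)^*$, and then a connectedness argument will promote the mod-$2\pi$ statement to the exact equality $\Tr(h)=0$.

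For the first point, I would use that the determinant
$\det\colon U(M_n(C([0,1])))\to U(C([0,1]))=C([0,1],\T)$
is a continuous group homomorphism into an abelian group. Its kernel is therefore a closed normal subgroup containing every commutator, and hence contains $CU(M_n(C([0,1])))$. Consequently $\det u(t)\equiv 1$ on $[0,1]$, and in particular $\det(u(0)u(t)^*)=1$ for all $t\in[0,1]$.

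For the second ingredient, set $v(t):=u(0)u(t)^*$. Since $|e^{i\theta}-1|=2|\sin(\theta/2)|<1$ iff $|\theta|<\pi/3=2\arcsin(1/2)$, the spectrum of the normal matrix $v(t)$ lies in the open arc $\{e^{i\theta}:|\theta|<\pi/3\}$, on which the principal branch of $\log$ is holomorphic. Setting $h(t):=-i\log v(t)$ therefore gives a norm-continuous $M_n$-valued path of selfadjoint matrices with $h(0)=0$, $\|h(t)\|<\pi/3$, and $\exp(ih(t))=v(t)$; uniqueness of the principal logarithm on the selfadjoint ball of radius $<\pi$ forces $h(1)=h$.

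Finally I would combine the two ingredients by taking determinants of $\exp(ih(t))$: the identity $\exp(i\Tr(h(t)))=\det\exp(ih(t))=\det v(t)=1$ places $\Tr(h(t))$ in $2\pi\Z$ for every $t$. The function $t\mapsto\Tr(h(t))$ is continuous and takes values in the discrete set $2\pi\Z$ on the connected interval $[0,1]$, so it is constant; evaluating at $t=0$ gives $\Tr(0)=0$, hence $\Tr(h)=\Tr(h(1))=0$. The only step that warrants a moment of care is the determinant vanishing for $u\in CU$, but that reduces immediately to continuity of $\det$ and to the fact that commutators die in an abelian target.
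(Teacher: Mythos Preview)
Your proof is correct and follows essentially the same route as the paper: use that $u\in CU$ forces determinant $1$ pointwise, take a continuous principal logarithm $h(t)$ of $u(0)u(t)^*$, observe $\Tr(h(t))\in 2\pi\Z$, and conclude by continuity from $h(0)=0$. Your version is slightly more streamlined in that you argue directly with $\det(u(0)u(t)^*)=\det u(0)\cdot\overline{\det u(t)}=1$, whereas the paper first argues that the constant function $u(0)$ and hence $u(0)u^*$ lie in $CU(M_n(C([0,1])))$ before invoking the same trace-in-$2\pi\Z$ step.
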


\begin{proof}
By the assumption (\ref{Le-1}),  there  exists a selfadjoint element $b\in M_n(C([0,1])$ such that
$$
u(0)u(t)^*=\exp(\sqrt{-1}b(t))\andeqn \|b\|<2\arcsin(1/2).
$$
However, $u(0)u(t)^*\in CU(M_n(C([0,1])).$ It follows that
$$
({1\over{2\pi}}){\rm Tr}(b(t))\in \Z.
$$
Therefore
${\rm Tr}(b(t))$ is a constant.
Since $b(0)=0,$
$$
({1\over{2\pi}}){\rm Tr}(b(t))=0\tforal t\in [0,1].
$$
Note that $h=b(1).$ Therefore
$$
{\rm Tr}(h)=0.
$$
\end{proof}

%

\begin{thm}\label{NMT1}
Let $X$ be a compact metric space, let ${\cal F}\subset C(X)$ be a finite subset and let $\ep>0$ be a positive number.
There exists $\eta_1>0$ satisfying the following:
for any $\sigma_1>0,$ there exists $\eta_2>0$ satisfying the following:
for any $\sigma_2>0,$ there exists $\eta_3>0$ satisfying the following:
for any $\sigma_3>0,$ there exists $\eta_4>0$ satisfying the following:
For any  $\sigma_4>0,$ there exists $\gamma_1>0,$ $\gamma_2>0,$
$\dt>0,$ a finite subset ${\cal G}\subset C(X)$  and a finite subset ${\cal P}\subset \underline{K}(C(X))$
 a finite subset ${\cal H}\subset C(X)$ and a finite subset ${\cal U}\subset U_c(K_1(C(X)))$
 for which $[{\cal U}]\subset {\cal P}$ satisfying the following:
For any two unital $\dt$-${\cal G}$-multiplicative \morp s
$\phi, \psi: C(X)\to M_n(C([0,1])$  such that
\beq\label{NT-1-1}
[\phi]|_{\cal P}=[\psi]|_{\cal P}=[h]|_{\cal P}
\eneq
for some unital \hm\, $h: C(X)\to M_n(C([0,1])),$
\beq\label{NT-1}
\mu_{\tau\circ \phi}(O_r)\ge \sigma_i,\,\,\,
\mu_{\tau\circ \psi}(O_r)\ge \sigma_i,
\eneq
for all $\tau\in T(M_n(C([0,1])))$ and for all $r\ge \eta_i,$ $i=1,2,3,$
\beq\label{NT-2}
|\tau\circ \phi(g)-\tau\circ \psi(g)|<\gamma_1
\tforal g\in {\cal H}\tand\\
{\rm dist}(\overline{\langle \phi(u)\rangle},\overline{\langle \psi(u)\rangle})<\gamma_2\tforal u\in {\cal U},
\eneq
there exists a unitary $W\in M_n(C([0,1]))$ such that
\beq\label{NT-3}
\|W\phi(f)W^*-\psi(f)\|<\ep\tforal f\in {\cal F}.
\eneq

\end{thm}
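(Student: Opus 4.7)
\medskip

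\noindent\textbf{Proof sketch.}
The plan is to discretize the interval $[0,1]$ by a fine partition $0 = t_0 < t_1 < \cdots < t_N = 1$, apply the pointwise (matrix-algebra) uniqueness theorem \ref{Uniqq} at each node $t_i$, and then assemble the resulting pointwise unitaries $u_i\in M_n$ into a continuous unitary path $W\in M_n(C([0,1]))$. The four layers of measure control $(\eta_i,\sigma_i)$ correspond to the four preparatory results that will be invoked in turn: \ref{Uniqq}, \ref{Lijram}, \ref{Lmem}, and \ref{Bbot}. At each stage, the measure lower bound must be preserved (up to controlled distortion) for the next stage, which is precisely what Lemma \ref{keeptrace}/\ref{1keeptrace} provides, and the successive $\eta_i$ are chosen so that each lemma's scale requirement is met.

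First I would fix the partition with mesh so small that, by continuity of $\phi$ and $\psi$ on the generators in $\mathcal G$, the evaluations $\phi_i:=\phi(\cdot)(t_i)$ and $\psi_i:=\psi(\cdot)(t_i)$ are $\dt'$-$\mathcal G$-multiplicative \morp s $C(X)\to M_n$ that vary by at most $\ep/100$ in operator norm on $\mathcal F$ across each subinterval $[t_i,t_{i+1}]$. At each node I would then use Corollary \ref{Lijram} (at scale $(\eta_2,\sigma_2)$) to replace $\phi_i,\psi_i$ by genuine unital \hm s $h_i^\phi, h_i^\psi:C(X)\to M_n$ with the same KL-class as the reference \hm\, $h$, while preserving measure lower bounds and trace closeness up to fixed distortion. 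Applying Theorem \ref{Uniqq} at scale $(\eta_1,\sigma_1)$ produces unitaries $u_i\in M_n$ such that $\|u_i h_i^\phi(f) u_i^* - h_i^\psi(f)\|<\ep/100$ for $f\in\mathcal F$.

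The core of the argument is the gluing. Set $v_i := u_i^{*}u_{i+1}\in U(M_n)$. Since both $u_i$ and $u_{i+1}$ conjugate $h_i^\phi$ to essentially the same map $h_i^\psi$ (using mesh fineness), $v_i$ approximately commutes with $h_i^\phi$, so ${\rm Bott}(h_i^\phi, v_i)|_{\mathcal P}$ is well-defined as in \ref{prem0}. The key calculation is to match the cumulative invariants $\sum_i {\rm Bott}(h_i^\phi, v_i)$, evaluated on ${\cal P}\cap K_1(C(X))$ (generated by $\mathcal U$), with the de la Harpe--Skandalis data: for each $u\in\mathcal U$, the unitary $\phi(u)\psi(u)^*\in M_{n k}(C([0,1]))$ is a continuous path of unitaries in $M_{nk}$ whose integrated rotation number (computed via Lemma \ref{easy} along each subinterval) is, up to the pointwise matching $u_i h_i^\phi u_i^{*}\approx h_i^\psi$, essentially $\sum_i {\rm Bott}(h_i^\phi, v_i)([u])$. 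Thus the hypothesis $\dist(\overline{\langle\phi(u)\rangle},\overline{\langle\psi(u)\rangle})<\gamma_2$ bounds the magnitude of the total Bott obstruction by $\gamma_2$. By choosing the mesh in proportion to $\gamma_2$, I can guarantee that the per-subinterval Bott correction needed is below the density threshold $d$ demanded by Lemma \ref{Bbot}.

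Once the $\gamma_2$ budget is reconciled, I would invoke Lemma \ref{Bbot} at scale $(\eta_4,\sigma_4)$ to construct, for each $i$, a unitary $z_i\in M_n$ that $\dt_{C(X),\mathcal P,\mathbf b}$-commutes with $h_i^\phi$ and has ${\rm Bott}(h_i^\phi, z_i)|_{\mathcal P}=-{\rm Bott}(h_i^\phi, v_i)|_{\mathcal P}$. Replacing $u_i$ by the cumulative product $\tilde u_i:=z_{i-1}z_{i-2}\cdots z_0 u_i$ adjusts the transitions so that $\tilde v_i := \tilde u_i^{*}\tilde u_{i+1}$ has vanishing Bott class on $\mathcal P$, while only perturbing the pointwise matching at each $t_i$ by a controlled amount. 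Finally, Theorem \ref{Lmem} at scale $(\eta_3,\sigma_3)$ applied to the pair $(h_i^\phi,\tilde v_i)$ yields, on each $[t_i,t_{i+1}]$, a short continuous path of unitaries from $1_{M_n}$ to $\tilde v_i$ that approximately commutes with $h_i^\phi$ on $\mathcal F$. Concatenating these paths, twisted by the left-multiplications $\tilde u_i$, produces the desired unitary $W\in M_n(C([0,1]))$; the estimate $\|W\phi(f)W^{*}-\psi(f)\|<\ep$ on $\mathcal F$ follows from the pointwise matching at the nodes plus the approximate commutation along each subinterval.

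The main obstacle is the simultaneous budgeting in the gluing step: the density constant $d$ produced by Lemma \ref{Bbot} depends on the test data $\mathcal P$, while the total Bott ``charge'' that must be corrected is $\gamma_2 N$, where $N$ is the number of partition intervals and $N$ must be large (to ensure fineness). Both constants $\gamma_2$ and the mesh must be chosen after $d$ is fixed, so the argument requires careful ordering of the quantifiers — exactly the iterated $(\eta_i,\sigma_i)$ structure the theorem advertises — and rests on the fact that the per-interval Bott correction, not the cumulative one, is what Lemma \ref{Bbot} must supply.
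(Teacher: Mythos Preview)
Your overall architecture is the paper's: partition $[0,1]$, match pointwise via \ref{Uniqq}, replace by genuine \hm s via \ref{Lijram}, correct the transition Bott classes using \ref{Bbot}, and connect via \ref{Lmem}. But the correction step, which is the heart of the argument, has a genuine gap.

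You control only the \emph{total} Bott charge $\sum_i {\rm Bott}(h_i^\phi,v_i)([g_j])$ by $\gamma_2$, and then ask \ref{Bbot} to produce $z_i$ with ${\rm Bott}(h_i^\phi,z_i)=-{\rm Bott}(h_i^\phi,v_i)$. For \ref{Bbot} to fire you need $|{\rm Bott}(h_i^\phi,v_i)(g_j)|/n<d$ for each $i$ individually, and nothing in your argument gives that; a small sum does not force small summands. Your cumulative product $\tilde u_i=z_{i-1}\cdots z_0\,u_i$ does not repair this: each $z_j$ almost commutes with $h_j^\phi$, not with $h_i^\phi$, so for $|i-j|$ large neither $[\tilde u_i,h_i^\phi(\cdot)]$ nor the Bott class of $\tilde v_i$ is under control. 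The paper's fix is nodal rather than interval-based and relies on a telescoping identity you do not reach. At each node $t_i$ one uses the hypothesis ${\rm dist}(\overline{\langle\phi(g_j)\rangle},\overline{\langle\psi(g_j)\rangle})<\gamma_2$ \emph{pointwise}: there is $\omega_j\in CU(M_{nm(X)}(C[0,1]))$ with $\langle\phi(g_j)\rangle^*\langle\psi(g_j)\rangle\approx\omega_j$, hence $h_{i,1}(g_j)^*W_i h_{i,1}(g_j)W_i^*=\omega_j(t_i)\exp(\sqrt{-1}b_{i,j})$ with $\|b_{i,j}\|$ small. Setting $\lambda_{i,j}=(n/2\pi)(t\otimes{\rm Tr})(b_{i,j})$, the Exel trace formula together with \ref{easy} (to kill the $\omega_j(t_{i-1})^*\omega_j(t_i)$ term) yields ${\rm bott}_1(h_{i-1,1}(g_j),w_{i-1}^*w_i)=\lambda_{i,j}-\lambda_{i-1,j}$. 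Each $\lambda_{i,j}/n$ is small (of order $\gamma_2+\ep_1$), so one chooses $z_i$ via \ref{Bbot} with ${\rm Bott}(h_{i,1},z_i)(g_j)=\lambda_{i,j}$, and then $U_i=z_{i-1}w_{i-1}^*w_iz_i^*$ has ${\rm Bott}(h_{i-1,1},U_i)=\lambda_{i-1,j}+(\lambda_{i,j}-\lambda_{i-1,j})-\lambda_{i,j}=0$. The corrections live at adjacent nodes only, so approximate commutation survives.

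A secondary issue is your quantifier ordering. You assign $(\eta_1,\sigma_1)$ to \ref{Uniqq} and $(\eta_4,\sigma_4)$ to \ref{Bbot}; the paper does the reverse (\ref{Lmem} outermost, then \ref{Bbot}, then \ref{Uniqq}/\ref{Lijram}). This matters because the precision $\ep_1$ with which \ref{Uniqq} must match $\phi_{t_i}$ to $\psi_{t_i}$ has to be fine enough that the logarithms $b_{i,j}$ are well-defined and small relative to the threshold $d$ supplied by \ref{Bbot}; thus $\ep_1$ depends on $d$, which depends on $\sigma_2$ in the paper's scheme. Putting \ref{Uniqq} at the outermost layer freezes its precision before $d$ is known.
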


(Note, as stated in \ref{d1}, $\phi$ and $\psi$ in (\ref{NT-2}) are in fact $\phi\otimes {\rm id}_{M_k}$ and
$\psi\otimes {\rm id}_{m_k}$ for some integer $k\ge 1.$ This will be used in the proof below.)

\begin{proof}
Put $B=M_n(C([0,1])).$
We may write $C(X)=\lim_{n\to\infty} (C(Y_n), \imath_n),$ where $Y_n$ is a finite CW complex.
Let $\ep>0$ and a finite subset ${\cal F}\subset C(X)$ be given.  Without loss of generality, we may assume
that ${\cal F}\subset \imath_n(C(Y_n))$ for  some $n.$
Let $\eta_1'>0$ (in place of $\eta$) be required by
\ref{Lmem} for $\ep/32$ (in place of $\ep$) and ${\cal F}.$

Let $\eta_1=\eta_1'/3.$
Let $\sigma_1>0$ and let $\sigma_1'=\sigma_1/2>0.$ Let
 $\dt_1>0$ (in place of $\ep$), ${\cal G}_1\subset C(X)$ (in place of ${\cal G}$) be a finite subset and let ${\cal P}_0\subset \underline{K}(C(X))$ (in place of ${\cal P}$) be a finite subset required by \ref{Lmem}
for $\ep/32$ (in place of $\ep$), ${\cal F},$ $\eta_1'$ and $\sigma_1'.$
We may assume that $\dt_1<\ep/32.$

There exists a finite CW complex $Y,$ a unital \hm\, $\imath: C(Y)\to C(X)$  and a finite subset
${\cal F}'\subset C(Y)$
such that $\imath({\cal F}')={\cal F}$ and $[\imath](\underline{K}(C(Y)))\supset {\cal P}_0$
(by choosing $Y=Y_n$ for some large $n$).

Let $0<\dt_2<\dt_{C(Y), \bf b}$ and ${\cal G}_2'\supset {\cal F}_{C(Y), \bf b}$ such that
$(\dt_2, {\cal G}_2')$ forms a $KK$-pair for $C(Y).$  Let ${\cal P}_0'\subset \underline{K}(C(Y))$ be such that
$\dt_{C(Y), \bf b}=\dt_{C, {\cal P}_0', \bf b}.$ To simplify the notation, without loss of generality,
we may assume that $[\imath]({\cal  P}_0')={\cal P}_0.$  Put ${\cal G}_2=\imath({\cal G}_2').$


Denote by $z\in C(\T)$ the identity function on the unit circle.
We may also assume that, for any $\dt_2$-$\{z,1\}\times {\cal G}_2$-multiplicative
\morp\, $\Lambda: C(\T)\otimes C(Y)\to C$ (for any unital \CA\, $C$
with $T(C)\not=\emptyset$), $[\Lambda]$ is well defined and
$$
\tau([\Lambda(g)])=0
$$
for all $g\in Tor(K_1(C(Y)))$ (which is a finite subgroup).

Furthermore, we may assume that $\dt_2$ is so small  that
if $\|uv-vu\|<3\dt_2,$ then the Exel formula
$$
\tau({\rm bott}_1(u,v))={1\over{2\pi\sqrt{-1}}}(\tau(\log(u^*vuv^*))
$$
holds in any unital \CA\, $C$ with tracial rank zero and any $\tau\in T(C)$ (see Theorem 3.6 of \cite{Lnamj}). Moreover
if $\|v_1-v_2\|<3\dt_2,$ then
$$
{\rm bott}_1(u,v_1)={\rm bott}_1(u,v_2).
$$

Let ${\cal U}=\{g_1, g_2,..., g_{k(X)}\}\subset U_c(K_1(C(X))$ be a finite subset
such that $\{[g_1], [g_2],...,[g_{k(X)}]\}$ forms  a set of generators for the finitely generated subgroup generated by
${\cal P}_0\cap K_1(C(X)).$  We assume that $m(X)\ge 1$ is an integer and
$g_i\in U(M_{m(X)}(C(X)).$
We may further assume that there are $g_j'$ ($j=1,2,...,k(X)$) in $U_c(K_1(C(Y)))$ such that
$\imath(g_j)=g_j',$ $j=1,2,...,k(X)$ (here again we identify a set of unitaries with its image in $U(C(Y))/CU(C(Y)))$).
Furthermore, we may assume that $g_1', g_2',...,g_{k(X)}'$ generate $K_1(C(Y)).$
Let ${\cal U}_0\subset C(X)$ be a finite subset
such that
$$
{\cal U}=\{(a_{i,j}): a_{i,j}\in {\cal U}_0\}.
$$

Let $\dt_u=\min\{1/256m(X)^2, \dt_1/16m(X)^2, \dt_2/16m(X)^2\}$ and
${\cal G}_u={\cal F}\cup{\cal G}_1\cup{\cal G}_2\cup {\cal U}_0.$

Let $\eta_2'>0$ (in place of $\eta$) and $\{s_1, s_2,...,s_{k'}\}\subset K_1(C(Y))$ be required by \ref{Bbot} for
$\dt_u$ (in place of $\ep$) and ${\cal G}_u$ (in place of ${\cal F}$).
Without loss of generality, we may assume that
$\{s_1, s_2,...,s_{k'}\}=\{g_1', g_2',...,g_{k(X)}'\}$ which generates $K_1(C(Y)).$
Put $\eta_2=\eta_2'/3.$

Let $\sigma_2>0$ and let $\sigma_2'=\sigma_2/2.$  Let $1>d>0$ be required by \ref{Bbot} for $\min\{\dt_1/4,\dt_2/4\}$ (in place $\ep$), ${\cal G}_u$ (in place of ${\cal F}$), $\eta_2$ and $\sigma_2'.$

Let $\dt_3>0$ (in place of $\dt$) and let ${\cal G}_3\subset C(\T)\otimes C(X)$ (in place of ${\cal G}$) be required by Lemma 10.3 of \cite{LnApp} for $d/8$ (in place of $\sigma$) and $\T\times X$ (in place of $X$).
Without loss of generality, we may assume
that
$$
{\cal G}_3=\{z\otimes g: g\in {\cal G}_4'\}\cup\{1\otimes g: g\in {\cal G}_4\},
$$
where
${\cal G}_4'\subset C(X)$ is a finite subset (by  choosing a smaller $\dt_3$ and large ${\cal G}_3$).

Let $\ep_1'>0$ (in place of $\dt$) and let ${\cal G}_4''\subset C(X)$ (in place of ${\cal G}$) be a finite subset
required by \ref{1keeptrace} for $\eta_1,\eta_2,$ $\sigma_1, \sigma_2,$ $1/2$ (in place of $\lambda_1$) and
$1/4$ (in place of $\lambda_2$).

Let $\ep_1''=\min\{d/27m(X)^2, \dt_u/2,  \dt_3/2m(X)^2, \ep_1'/2m(X)^2\}$ and
let ${\bar \ep}_1>0$ (in place of $\dt$) and ${\cal G}_5\subset C(X)$ (in place of ${\cal F}_1$) be a finite subset
required by 2.8 of \cite{Lnmem} for $\ep_1''$ (in place of $\ep$) and ${\cal G}_u\cup {\cal G}_4'\cup {\cal G}_4''$
(and $C(X)$ in place of $B$).
Put
$$
\ep_1=\min\{\ep_1', \ep_1'',{\bar \ep_1}\}.
$$

Let $\eta_3'>0$ (in place of $\eta$) be required by \ref{Uniqq} for
$\ep_1/4$ (in place $\ep$) and ${\cal G}_5$ (in place of ${\cal F}$).

Let $\eta_3''>0$ (in place of $\eta_1$) be required by \ref{Lijram}
for $\ep_1/4$ (in place of $\ep$) and ${\cal G}_5$
(in place of ${\cal F}$).
Let $\eta_3=\min\{\eta_3', \eta_3''\}.$
Let $\sigma_3>0.$ Let $\gamma_1>0$ (in place of $\gamma$), $\dt_4>0,$ ${\cal G}_6\subset C(X)$ (in place of ${\cal G}$), ${\cal H}\subset C(X)$ be a finite subset and let ${\cal P}_1\subset \underline{K}(C(X))$ (in place of ${\cal P}$) be required by \ref{Uniqq} for $\ep_1/4$ (in place of $\ep$), ${\cal G}_5$ (in place of ${\cal F}$), $\eta_3$ (in place $\eta$) and $\sigma_3$ (in place
$\sigma$).
Let $\eta_4>0$ (in place of $\eta_2$) be required by \ref{Lijram} for
$\ep_1/4$ (in place of $\ep$), ${\cal G}_5$ (in place of ${\cal F}$),
$\eta_3$ (in place of $\eta_1$), $\sigma_3$ (in place of $\sigma_1$).
Let $\sigma_4>0.$ Let $\dt_5>0,$ ${\cal G}_7\subset C(X)$
(in place of ${\cal G}$),${\cal P}_2\subset \underline{K}(C(X))$ (in place of ${\cal P}$) be required by \ref{Lijram}.

Let $\dt=\min\{\ep_1/4, \dt_4, \dt_5\},$
${\cal G}={\cal G}_5\cup {\cal G}_6\cup {\cal G}_7\cup {\cal H}$ and
${\cal P}={\cal P}_0\cup {\cal P}_1\cup {\cal P}_2.$
Let
$\gamma_2<\min\{d/16m(X)^2, \dt_u/9m(X)^2, 1/256m(X)^2\}.$
We may assume that $(\dt, {\cal G}, {\cal P})$ is a $KL$-triple.

Now suppose that $\phi, \psi: C(X)\to B$
are two unital $\dt$-${\cal G}$ multiplicative \morp s which satisfy the assumption for the above
$\eta_i,$ $\dt_i$ ($i=1,2,3,4$), $\gamma_i$ ($i=1,2$), ${\cal P},$ ${\cal U}$ and ${\cal H}.$

Choose a partition
$$
0=t_0<t_1<\cdots <t_N=1
$$
such that
\beq\label{NT-3-1}
\|\pi_t\circ \phi(g)-\pi_{t_{i-1}}\circ \phi(g)\|<\ep_1/4\andeqn
\|\pi_t\circ \psi(g)-\pi_{t_{i-1}}\circ \psi(g)\|<\ep_1/4
\eneq
for all $g\in {\cal G}$ and
for all $t\in [t_{i-1}, t_i],$ $i=1,2,...,N.$
By applying \ref{Uniqq}, for each $i,$ there exists a unitary
$w_i\in M_n$ such that
\beq\label{NT-3+n}
\|w_i\pi_{t_i}\circ \phi(g)w_i^*-\pi_{t_i}\circ \psi(g)\|<\ep_1/4\tforal g\in {\cal G}_5
\eneq
and, by \ref{Lijram},  there are unital \hm s $h_{i,1}, h_{i, 2}: C(X)\to M_n$ such that
\beq\label{NT-4}
\|\pi_{t_i}\circ \phi(g)-h_{i,1}(g)\|<\ep_1/4\andeqn
\|\pi_{t_i}\circ \psi(g)-h_{i,2}(g)\|<\ep_1/4
\tforal g\in {\cal G}_5,
\eneq
$i=0,1,2,...,N.$
Moreover (by also applying \ref{1keeptrace}),
\beq\label{NT-4+}
\mu_{tr\circ h_{i,j}}(O_r)\ge \sigma_k'
\eneq
for all $r\ge \eta_k',$ $k=1,2,$ $j=1,2$ and $i=1,2,...., N.$
Let  $\omega_j\in M_{m(X)}(B)$ be a unitary
such that $\omega_j\in CU(M_{m(X)}(B)$ and
\beq\label{NT-5}
\|\langle\phi(g_j^*)\rangle \langle \psi(g_j)\rangle  -\omega_j\|<\gamma_2,\,\,\,i=1,2,...,k(X).
\eneq
Write
$$
\omega_j=\exp(\sqrt{-1}a_j)
$$
for some selfadjoint element $a_j\in M_{m(X)}(M_n(C([0,1]))),$ $j=1,2,...,k(X).$
Then
$$
{n(t\otimes {\rm Tr}_{m(X)})(a_j(s))\over{2\pi}}\in \Z
$$
($s\in [0,1]$), where $t$ is the normalized trace on $M_n.$
It follows that
the above is a constant. In particular,
\beq\label{NT-5+1}
n(t\otimes {\rm Tr}_{m(X)})(a_j(t_i))=n(t\otimes {\rm Tr}_{m(X)})(a_j(t_{i-1})),
\eneq
$i=1,2,...,N$ and $j=1,2,...,k(X).$

Let $W_i=w_i\otimes {\rm id}_{M_{m(X)}},$ $i=0,1,....,N.$
Then
\beq\label{NT-5+2}
\|(h_{i, 1}\otimes {\rm id}_{M_{m(X)}})(g_j^*)W_i(h_{i, 1}\otimes {\rm id}_{M_{m(X)}})(g_j)W_i^*-\omega_j(t_i)\|<3m(X)^2\ep_1+2\gamma_2<1/32
\eneq

It follows from (\ref{NT-5}) that there exists selfadjoint elements $b_{i,j}\in M_{nm(X)}$ such that
\beq\label{NT-5+3}
\exp(\sqrt{-1}b_{i,j})=\omega_j(t_i)^*(h_{i, 1}\otimes {\rm id}_{M_{m(X)}})(g_j^*)W_i(h_{i, 1}\otimes {\rm id}_{M_{m(X)}})(g_j)W_i^*
\eneq
such that
\beq\label{NT-5+4}
\|b_{i,j}\|<2\arcsin (3m(X)^2\ep_1/4+2\gamma_2),\,\,\,j=1,2,...,k(X),\,i=0,1,...,N.
\eneq
Note that
\beq\label{NT-5+5}
(h_{i, 1}\otimes {\rm id}_{M_{m(X)}})(g_j^*)W_i(h_{i, 1}\otimes {\rm id}_{M_{m(X)}})(g_j)W_i^*
=\omega_j(t_i)\exp(\sqrt{-1}b_{i,j}),
\eneq
$j=1,2,...,k(X)$ and $i=0,1,...,N.$

Then,
\beq\label{NT-5+5+}
{n\over{2\pi}}(t\otimes {\rm Tr}_{M_{m(X)}})(b_{i,j})\in \Z,,\,\,\,j=1,2,...,k(X),\,i=0,1,...,N.
\eneq

Let
$$
\lambda_{i,j}={n\over{2\pi}}(t\otimes {\rm Tr}_{M_{m(X)}})(b_{i,j})
$$
$j=0,1,2,...,k(X),$ $i=1,2,...,N.$
Note that $\lambda_{i,j}\in \Z.$

Define
$\af_i^{(0,1)}: K_1(C(Y))\to \Z$  by
mapping $g_j'$ to $\lambda_{i,j},$ $j=1,2,...,m(X)$ and $i=0,1,2,...,N.$
We write $K_0(C(\T)\otimes C(Y))=K_0(C(Y))\oplus {\boldsymbol{\bt}}(K_1(C(Y)))$
(see  2.10 of \cite{Lnmem} for the definition
of ${\boldsymbol{\bt}}$).
Define $\af_i: K_*(C(\T)\otimes C(Y))\to K_*(M_n)$ as follows
\beq\label{NT-6}
\af_i|_{K_0(C(\T)\otimes C(Y))}([1])=n,\\
\af_i|_{{\rm ker}\rho_{C(Y)}}=0,\\
\af_i|_{{\boldsymbol{\bt}}(K_1(C(Y)))}=\af_i\circ {\boldsymbol{\bt}}|_{K_1(C(Y))}=\af_i^{(0,1)},\\
\af_i|_{K_1(C(\T)\otimes C(Y))}=0,\\
\eneq
By the Universal Coefficient Theorem (\cite{RS}), there exists an element $\af_i\in KK(C(\T)\otimes C(Y), \C)$ such that
$\af_i|_{K_*(C(\T)\otimes C(Y)}=\af_i$ as defined above, $i=1,2,...,N.$
We estimate that
$$
\|(w_{i-1}^*w_i)h_{i-1,1}(g)-h_{i-1,1}(g)(w_{i-1}^*w_i)\|<\ep_1\tforal g\in {\cal G}_5.
$$
Let $\Lambda_i: C(\T)\otimes C(X)\to M_n$ be a unital \morp\, given
by the pair $w_{i-1}^*w_i$ and $h_{i-1,1},$ $i=1,2,...,N$ (see 2.8 of \cite{Lnmem}).
Denote $V_{i,j}=h_{i,1}\otimes {\rm id}_{M_{m(X)}}(g_j),$ $j=1,2,...,m(X)$ and $i=0,1,2,...,N.$
Note that
\beq\label{NT-6+0}
\|W_{i-1}V_{i-1,j}^*W_{i-1}^* V_{i-1,j}V_{i-1,j}^*W_iV_{i-1,j}W_i^*-1\|<1/16\\
\|W_{i-1}V_{i-1,j}^*W_{i-1}^*V_{i-1,j}V_{i,j}^*
W_i V_{i,j}W_i^*-1\|<1/16
\eneq
and there is a continuous path $Z(t)$
of unitaries  such that $Z(0)=V_{i-1,j}$ and $Z(1)=V_{i,j}.$
We obtain a continuous path
$$
W_{i-1}V_{i-1,j}^*W_{i-1}^*V_{i-1,j}Z(t)^*W_i Z(t)W_i^*
$$
which is in $CU(M_{nm(X)})$
for all $t\in  [0,1].$
It follows that
$$
(1/2\pi\sqrt{-1})(t\otimes {\rm Tr}_{M_{m(X)}})[\log(W_{i-1}V_{i-1,j}^*W_{i-1}^*V_{i-1,j}Z(t)^*W_i Z(t)W_i^*)]
$$
is a constant. In particular,
\beq\label{NT-6-0+}
(1/2\pi\sqrt{-1})(t\otimes {\rm Tr}_{M_{m(X)}})(\log(W_{i-1}V_{i-1,j}^*W_{i-1}^*W_i V_{i-1,j}W_i^*))\\\label{NT-6-0++}
=(1/2\pi\sqrt{-1})(t\otimes {\rm Tr}_{M_{m(X)}})(\log(W_{i-1}V_{i-1,j}^*W_{i-1}^*V_{i-1, j}V_{i,j}^*W_i V_{i,j}W_i^*)).
\eneq
Also
\beq\label{NT-6+1}
&&W_{i-1}V_{i-1,j}^*W_{i-1}^*V_{i-1, j}V_{i,j}^*W_i V_{i,j}W_i^*\\
&&=(\omega_j(t_{i-1})\exp(\sqrt{-1}b_{i-1,j}))^*\omega_j(t_i)
\exp(\sqrt{-1}b_{i,j})\\\label{NT-6+1+}
&&=\exp(-\sqrt{-1}b_{i-1,j})\omega_j(t_{i-1})^*\omega_j(t_i)\exp(\sqrt{-1}b_{ij}).
\eneq
Note that, by (\ref{NT-5}) and (\ref{NT-5+2}),
\beq
\|\omega_j(t_{i-1})^*\omega_j(t_i)-1\|<3(3\ep_1'+2\gamma_2)<3/32,
\eneq
$j=1,2,...,m(X),$ $i=1,2,...,N.$
By \ref{easy}
\beq\label{LeasyApp}
(t\otimes {\rm Tr}_{m(X)})(\log(\omega_j(t_{i-1})^*\omega_j(t_i)))=0.
\eneq

It follows that (by the Exel formula, using (\ref{NT-6-0++}), (\ref{NT-6+1+}) and (\ref{LeasyApp}))
\beq\label{NT-6+2}
&&\hspace{-0.4in}(t\otimes {\rm Tr}_{m(X)})({\rm bott}(V_{i-1,j}, W_{i-1}^*W_i))\\
 &=&
({1\over{2\pi \sqrt{-1}}})(t\otimes {\rm Tr}_{m(X)})(\log(V_{i-1,j}^*W_{i-1}^*W_iV_{i-1,j}W_i^*W_{i-1}))\\
&=&({1\over{2\pi \sqrt{-1}}})(t\otimes {\rm Tr}_{m(X)})(\log(W_{i-1}V_{i-1,j}^*W_{i-1}^*W_iV_{i-1,j}W_i^*))\\
&=&({1\over{2\pi \sqrt{-1}}})(t\otimes {\rm Tr}_{m(X)})(\log(W_{i-1}V_{i-1,j}^*W_{i-1}^*V_{i-1,j}V_{i,j}^*
W_iV_{i,j}W_i^*))\\
&=& ({1\over{2\pi \sqrt{-1}}})(t\otimes {\rm Tr}_{m(X)})(\log(\exp(-\sqrt{-1}b_{i-1,j})\omega_j(t_{i-1})^*\omega_j(t_i)\exp(\sqrt{-1}b_{ij}))\\
&=& ({1\over{2\pi \sqrt{-1}}})[(t\otimes {\rm Tr}_{k(n)})(-\sqrt{-1}b_{i-1,j})+(t\otimes {\rm Tr}_{k(n)})(\log(\omega_j(t_{i-1})^*\omega_j(t_i))\\
&&\hspace{1.4in}+(t\otimes {\rm Tr}_{k(n)})(\sqrt{-1}b_{i,j})]\\
&=&{1\over{2\pi}}(t\otimes {\rm Tr}_{k(n)})(-b_{i-1,j}+b_{i,j}).
\eneq
In other words,
\beq\label{NT-+}
{\rm bott}(V_{i-1,j}, W_{i-1}^*W_i))=-\lambda_{i-1,j}+\lambda_{i,j}
\eneq
$j=1,2,...,m(X)$ and $i=1,2,...,N.$

Define $\bt_0=0,$
$\bt_1=[\Lambda_1]-\af_1+\af_0+\bt_0$ and
\beq\label{NT-+1}
\bt_i=[\Lambda_i]-\af_{i}+\af_{i-1}+\bt_{i-1},
\,\,\,i=1,2,...,N.
\eneq
 Define  $\kappa_0=0$ and $\kappa_i=\af_i+\bt_i,$ $i=1,2,...,N.$
 Note that $\af_i, \bt_i, \kappa_i\in
KK(C(\T)\otimes C(Y)), \C).$
We  compute that
\beq\label{Sch-1}
\bt_1(g_j')&=&[\Lambda_1](g_j')-\lambda_{1,j}+\lambda_{0,j}=0,\\
\bt_2(g_j')&=&[\Lambda_2](g_j')-\lambda_{2,j}+\lambda_{1,j}+\bt_1(g_j')=0
\andeqn\\
\bt_i(g_j')&=&0,\,\,\,i=1,2,...,N\andeqn
\,\,\,j=1,2,...,k(X).
\eneq
It follows that
\beq
|\tau\otimes {\rm Tr}_{m(X)}(\kappa_i([g_i]))|&=&
|\lambda_{i,j}/n| <d/2,
\eneq
$j=1,2,...,N$ and $i=1,2,...,k(X).$
By applying \ref{Bbot}, there is, for each $i=1,2,...,N,$
a unitary $z_i\in M_n$ such that
\beq\label{NT-7}
\|[z_i,\,h_{i,1}(g)]\|<\dt_u\tforal g\in {\cal G}_u\\
\andeqn
{\rm Bott}(z_i, h_{i,1}\circ \imath)=\kappa_i,\,\,\,i=0,1,2,...,N-1.
\eneq
Let $U_i=z_{i-1}w_{i-1}^*w_iz_i^*,$ $i=1,2,...,N.$
Then
\beq\label{NT-8}
\|[U_i, h_{i-1,1}(g)]\|<\min\{\dt_1, \dt_2\}\tforal g\in {\cal G}_u,
\eneq
$i=1,2,...,N.$
Moreover
\beq
{\rm Bott}(U_i, h_{i-1,1}\circ \imath)&=&
{\rm Bott}(z_{i,1},\, h_{i-1,1}\circ \imath)+{\rm Bott}(w_{i-1}^*w_i, h_{i-1,1}\circ \imath)\\
&&+{\rm Bott}(z_i^*,\, h_{i-1,1}\circ\imath)\\
&=& \kappa_{i-1}+[\Lambda_i]-\kappa_i\\
&=& \af_{i-1}+\bt_{i-1}+[\Lambda_i]
-\af_i-\bt_i\\
&=& \af_{i-1}+\bt_{i-1}+[\Lambda_i]
-\af_i-([\Lambda_i]-\af_{i}+\af_{i-1}+\bt_{i-1})\\
&=& 0
\eneq
$i=0,1,2,...,N-1.$
It follows that
$$
{\rm Bott}(U_i,\,h_{i-1,1})|_{\cal P}=0,\,\,\,i=1,2,...,N-1.
$$
By applying \ref{Lmem}, there exists a continuous path of unitaries, $\{U_i(t): t\in [t_{i-1}, t_i]\}$ such that
\beq\label{NT-11}
U_i(t_{i-1})=1,\,\,\, U_i(t_i)=z_{i-1}w_{i-1}^*w_iz_i^*\andeqn\\
\|U(t)h_{i-1,1}(f)U(t)^*-h_{i-1,1}(f)\|<\ep/32
\eneq
for all $f\in {\cal F}$ and for all $t\in [t_{i-1}, t_i],$ $i=1,2,...,N.$
Define
$W\in B$ by
$$
W(t)=w_{i-1}z_{i-1}^*U_i(t)\tforal t\in [t_{i-1}, t_i],\,\,\,i=1,2,...,N.
$$
Note that $W(t_{i-1})=w_{i-1}z_{i-1}^*,$ $i=1,2,...,N,$ and
$W(1)=w_Nz_N^*.$
One checks that, by (\ref{NT-4}), (\ref{NT-11}), (\ref{NT-7}), for $t\in [t_{i-1}, t_i],$
\beq
&&\hspace{-0.4in}\|W(t)\pi_t\circ \phi(f)W(t)^*-\pi_t\circ \psi(f)\| \\
&<&\|W(t)\pi_t\circ \phi(f)W(t)^*-W(t)\pi_{t_{i-1}}\circ \phi(f)W(t)^*\|\\
&&\hspace{0.2in}+
\|W(t)\pi_{t_{i-1}}\circ \phi(f)W(t)^*-W(t)h_{i-1,1}(f)W(t)^*\|\\
&&\hspace{0.4in}+
\|W(t)h_{i-1,1}(f)W(t)^*-W(t_{i-1})h_{i-1,1}(f)W(t_{i-1})^*\|\\
&&\hspace{0.6in}+\|W(t_{i-1})h_{i-1,1}(f)W(t_{i-1})^*-w_{i-1}h_{i-1,1}(f)w_{i-1}^*\|\\
&&\hspace{0.8in} +\|w_{i-1}h_{i-1,1}(f)w_{i-1}^*-w_{i-1}\pi_{t_{i-1}}\circ \phi(f)w_{i-1}^*\|\\
&&\hspace{1in} +\|w_{i-1}\pi_{t_{i-1}}\circ \phi(f)w_{i-1}^*-\pi_{t_{i-1}}\circ \psi(f)\|\\
&&\hspace{1.2in}+\|\pi_{t_{i-1}}\circ \psi(f)-\pi_t\circ \psi(f)\|\\
&<& \ep_1/4+\ep_1/4+\ep/32+\dt_u+\ep_1/4+\ep_1/4+\ep_1/4< \ep
\eneq
for all $f\in {\cal F}.$

\end{proof}

\begin{rem}
{\rm
By an argument used in \ref{MT2}, we can remove the part of the assumption in (\ref{NT-1-1}) that  $[\phi]|_{\cal P}$ is the same as $[h]|_{\cal P}$ for some \hm. At present, we do not use that form of the statement.
}
\end{rem}

\section{Preparation for the proof}

\begin{lem}\label{torsionlength}
There is an integer $K>0$ satisfying the following condition:
Suppose that $u\in M_n(C([0,1])$ for some integer $n\ge K.$ Then,
for any integer $k>0$ and any $L>0,$ if ${\rm cel}(u^k)\le L,$ then
${\rm cel}(u)\le 2\pi/K +L/k+6\pi.$

\end{lem}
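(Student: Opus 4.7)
The plan is to decompose $u = \exp(iH)\,v$ where $H=h\cdot 1_{M_n}$ is a scalar self-adjoint matrix absorbing the de la Harpe--Skandalis determinant of $u$, and $v = u\exp(-iH)\in CU(B)\cap U_0(B)\subset C([0,1],SU(n))$, with $B=M_n(C([0,1]))$. Since $K_1(B)=0$, Thomsen's isomorphism~(\ref{HS-4}) gives
\[
\bar\Delta:\ U(B)/CU(B)\ \xrightarrow{\ \cong\ }\ \Aff(T(B))/\overline{\rho_B(K_0(B))}\ \cong\ C([0,1],\R)/(1/n)\Z.
\]
The underlying scalar warm-up is that, for $u\in U(C([0,1]))$ with a lift $g\in C([0,1],\R)$ of $u^k$ of sup-norm at most $L$, every lift of $u$ has the form $(g-2\pi j)/k$ for some $j\in\Z$ (because $[0,1]$ is connected); choosing the best $j\mod k$ gives ${\rm cel}(u)\le L/k+\pi$.

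Applying this to the matrix case: a length-$(L+\epsilon)$ path from $1$ to $u^k$ produces, via the integral defining $\Delta$, a lift $g\in C([0,1],\R)$ of $\bar\Delta(u^k)=k\bar\Delta(u)$ with $\|g\|_\infty\le L+\epsilon$. Shifting by an element of $\rho_B(K_0(B))$ (which has spacing $1/n$) yields a representative $h$ of $\bar\Delta(u)$ with $\|h\|_\infty\le L/k+1/(2n)\le L/k+2\pi/K$ when $n\ge K$. Setting $H:=h\cdot 1_{M_n}$, the straight-line path $t\mapsto\exp(itH)$ gives ${\rm cel}(\exp(iH))\le\|h\|_\infty\le L/k+2\pi/K$, while $v:=u\exp(-iH)$ satisfies $\bar\Delta(v)=0$ by construction, and hence lies in $CU(B)\cap U_0(B)$, with $v(s)\in SU(n)$ for every $s$. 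It remains to bound ${\rm cel}(v)\le 6\pi$; combining with the triangle inequality then yields
\[
{\rm cel}(u)\ \le\ {\rm cel}(\exp(iH))+{\rm cel}(v)\ \le\ L/k+2\pi/K+6\pi.
\]

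For the bound on ${\rm cel}(v)$ I plan to exploit three features: (i) $v\in C([0,1],SU(n))$, with $SU(n)$ simply connected and of operator-norm diameter $\le\pi$; (ii) $v^k=u^k\exp(-ikH)$ has controlled cel (from the given path for $u^k$ concatenated with the straight-line path for $\exp(-ikH)$); and (iii) the matrix room provided by $n\ge K$ permits a rotation/permutation manipulation in $M_{nk}(C([0,1]))$ relating $v^{\oplus k}$ to $v^k\oplus 1^{\oplus(k-1)}$, whence a null-homotopy in $SU(n)$ can be pulled back down. The main obstacle is precisely this last bound: ${\rm cel}$ is not universally bounded on $C([0,1],SU(n))$ alone, since diagonal loops $s\mapsto\mathrm{diag}(e^{2\pi iNs},e^{-2\pi iNs})\in SU(2)$ have cel growing linearly in $N$, so the argument must simultaneously use the simple connectedness of $SU(n)$, the inherited control on ${\rm cel}(v^k)$, and the matrix room from $n\ge K$, while carefully avoiding the re-introduction of the $L/k$ term already absorbed by the scalar part. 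This threading, producing the clean constant $6\pi$ (naturally decomposing as three overheads of size $\le 2\pi$ from the rotation step, a null-homotopy contraction in $SU(n)$, and a perturbation/gluing correction), is the core technical piece of the proof.
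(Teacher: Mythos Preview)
Your decomposition $u=\exp(iH)v$ with scalar $H$ and $\det v\equiv 1$ is structurally the same as the paper's proof, which produces a self-adjoint correction $b/n+a/k$ (with $a$ supplied by Lemma~3.3(1) of \cite{Ph1}) so that $\exp(i(b/n+a/k))u$ has pointwise determinant~$1$ on $[0,1]$. The scalar-versus-matrix difference in the correction is immaterial; what matters is reaching a unitary $v\in C([0,1],SU(n))$ and then bounding ${\rm cel}(v)$ by an absolute constant.

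That last step is precisely where your argument has a genuine gap. The paper does not attempt to prove ${\rm cel}(v)\le 6\pi$ from scratch; it invokes Theorems~3.1 and~3.4 of Phillips~\cite{Ph1}, which give, for $n$ sufficiently large relative to $\dim X$, a uniform bound on the exponential length of determinant-one unitaries in $M_n(C(X))$. This is where the integer $K$ in the statement actually comes from. Your proposed route via ${\rm cel}(v^k)$ and the rotation trick relating $v^{\oplus k}$ to $v^k\oplus 1^{\oplus(k-1)}$ does not yield an absolute bound: the rotation path has length of order $k$ (at best $\log k$ if done dyadically), and ${\rm cel}(v^k)\le{\rm cel}(u^k)+\|kh\|$ reintroduces a term of order $L$ rather than $L/k$. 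You correctly flag this danger (``carefully avoiding the re-introduction of the $L/k$ term'') but supply no mechanism for avoiding it. Your own $SU(2)$ counterexample already shows that simple connectedness alone cannot do the job; the matrix room $n\ge K$ is essential, and Phillips' results are exactly the tool that converts that room into a length bound. Either cite them, or provide an independent argument---but the latter is a nontrivial theorem in its own right, not a routine manipulation.
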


\begin{proof}
(See the proof of 6.10 of \cite{Lncltr1}.) Let $K=K(1)$ be given  in Lemma 3.4 of \cite{Ph1}.
Let $n\ge K.$ 
It follows from Lemma 3.3 (1) of \cite{Ph1} that there exists a
selfadjoint element $a\in M_n(C([0,1]))$ with $\|a\|\le L$ such that
$$
{\rm det}(\exp(ia) u^k)(t)=1
$$
for every $t\in [0,1].$ 
Fix one of such integer $n.$ So
$$
{\rm det}((\exp(i a/k)u)^k)(t)=1
$$
for all $t\in [0,1].$
This implies that, for each $t\in [0,1],$
\beq\label{torsionl-1}
{\rm det}(\exp(i a(t)/k)u(t))=\exp( 2\pi i l(t)/k)
\eneq
for some integer $l(t)\le k.$ Suppose that $b(t)=-2\pi l(t)/k.$ Then
$b(t)$ is a real valued continuous function on $[0,1],$ whence
it is a constant. Note that
$$\exp(i (b(t)/n)\exp(ia/k)=\exp(i(b(t)/n+a/k)).$$ Then
\beq\label{torsionl-2}
{\rm det}(\exp(i(b(t)/n)\exp(ia/k)u)=1
\eneq
(for all $t\in [0,1]).$ By 3.4 and 3.1 of \cite{Ph1},
\beq\label{torsionl-3}
{\rm cel}(u)\le 2\pi/K+L/k+6\pi.
\eneq

\end{proof}

\begin{thm}\label{Olduniq}
Let $X$ be a compact metric space. Let  $\ep>0$ and let ${\cal F}\subset C(X)$  be a finite subset. Suppose that
$\lambda: U_c(K_1(C(X)))\to \R_+$ is a map.
There exist $\dt>0,$ a finite subset
${\cal G}\subset C(X),$ a finite subset ${\cal P}\subset
\underline{K}(C(X)),$  a finite subset of unitaries
${\cal U}\subset U_c(K_1(C(X)))$ and an integer $L>0$ satisfying the following condition:
if $\phi, \psi: C(X)\to C([0,1], M_n)$ (for some integer $n\ge 1$)
 are two
unital $\dt$-${\cal G}$-multiplicative \morp s such that
\beq\label{Olduni-1}
[\phi]|_{\cal P}=[\psi]|_{\cal P}\tand {\rm dist
}(\overline{\langle\phi(u)\rangle},\overline{\langle\psi(u)\rangle})\le \lambda(u)
\eneq
for all $u\in {\cal U},$
then there is a \hm\, $\Phi: C(X)\to
M_L(M_n(C([0,1])))$ with finite dimensional range and a unitary
$U\in M_{L+1}(M_n(C([0,1], M_n)))$ such that
\beq\label{Olduni-2}
\|U^*{\rm diag}(\phi(f), \Phi(f))U-{\rm diag}(\psi(f),
\Phi(f))\|<\ep
\eneq
for all $f\in {\cal F}.$
\end{thm}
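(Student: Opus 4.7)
My plan is to reduce Theorem \ref{Olduniq} to Theorem \ref{NMT1} by stabilizing $\phi$ and $\psi$ by a common point-evaluation homomorphism $\Phi: C(X)\to M_L(M_n(C([0,1])))$ of finite-dimensional range. The two roles of $\Phi$ are to supply the lower bounds on spectral measures of open balls that \ref{NMT1} requires, and, by taking $L$ large, to dilute $\phi$ and $\psi$ within the normalized traces on $M_{(L+1)n}(C([0,1]))$ so that both the trace-closeness and determinant-closeness hypotheses of \ref{NMT1} are met automatically.

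I first apply \ref{NMT1} with $\ep$ and ${\cal F}$ to produce $\eta_1$; fix a finite $\eta_1/2$-net of cardinality $M_1$ in $X$ and set $\sigma_1=1/(8M_1)$ to produce $\eta_2$; iterate, with successively finer $\eta_i/2$-nets of cardinalities $M_i$ and $\sigma_i=1/(8M_i)$, through $\eta_4, \sigma_4$, to extract $\gamma_1, \gamma_2, \dt, {\cal G}, {\cal P}, {\cal H}, {\cal U}$. These become the output parameters of \ref{Olduniq}. Next I pick $L$ to be a sufficiently large multiple of $4\cdot\text{lcm}(M_1,M_2,M_3,M_4)$ so that $L/(L+1)\ge 1/2$, $2\max_{g\in{\cal H}}\|g\|<(L+1)\gamma_1$, and $\max_{u\in{\cal U}}\lambda(u)<(L+1)\gamma_2$. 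For any integer $n$ I define $\Phi(f)=\sum_{i=1}^{4}\sum_{j=1}^{M_i} f(x_j^{(i)})P_{i,j}$, where $\{P_{i,j}\}$ is an orthogonal family of constant rank-$Ln/(4M_i)$ projections in $M_{Ln}$ summing to $1_{Ln}$.

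For any $\phi,\psi$ satisfying the hypotheses of \ref{Olduniq}, I set $\tilde\phi=\phi\oplus\Phi$ and $\tilde\psi=\psi\oplus\Phi$, unital $\dt$-${\cal G}$-multiplicative maps $C(X)\to M_{(L+1)n}(C([0,1]))$, and verify the hypotheses of \ref{NMT1}: (a) $[\tilde\phi]|_{\cal P}=[\tilde\psi]|_{\cal P}$ from $[\phi]|_{\cal P}=[\psi]|_{\cal P}$, with comparison homomorphism $\tilde h = h_0\oplus\Phi$, where $h_0$ is an honest homomorphism $C(X)\to M_n(C([0,1]))$ with $[h_0]|_{\cal P}=[\phi]|_{\cal P}$; (b) for $r\ge\eta_i$, every ball $O_r$ contains a point of the $\eta_i/2$-net, so $\mu_{\tau\circ\tilde\phi}(O_r)\ge (L/(L+1))\cdot(1/(4M_i))\ge 1/(8M_i)=\sigma_i$; (c) $|\tau\circ\tilde\phi(g)-\tau\circ\tilde\psi(g)|=(L+1)^{-1}|\tau_n\circ\phi(g)-\tau_n\circ\psi(g)|\le 2\|g\|/(L+1)<\gamma_1$ for $g\in{\cal H}$; and (d) by the scaling identity for the de la Harpe--Skandalis determinant under the stabilization $a\mapsto a\oplus 1_{Ln}$, which follows from the restriction relation $\tau'|_{M_n\subset M_{(L+1)n}}=(L+1)^{-1}\tau_n$ together with the vanishing of the identity's contribution to \ref{HS-1}, $\text{dist}(\overline{\langle\tilde\phi(u)\rangle},\overline{\langle\tilde\psi(u)\rangle})=(L+1)^{-1}\text{dist}(\overline{\langle\phi(u)\rangle},\overline{\langle\psi(u)\rangle})\le\lambda(u)/(L+1)<\gamma_2$ for $u\in{\cal U}$. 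Applying \ref{NMT1} to $\tilde\phi,\tilde\psi$ produces the required unitary $U\in M_{(L+1)n}(C([0,1]))$ with $\|U^*\text{diag}(\phi(f),\Phi(f))U-\text{diag}(\psi(f),\Phi(f))\|<\ep$ for $f\in{\cal F}$.

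The chief subtleties are item (a), the existence of $h_0$ realizing $[\phi]|_{\cal P}$, which is not automatic for an arbitrary $\dt$-${\cal G}$-multiplicative $\phi$ whose $KL$-class may lie outside the range of honest homomorphisms into $M_n(C([0,1]))$ and may require refining $\Phi$ by adjoining path-evaluation summands whose $KK$-data compensate the class of $\phi$ so that $[\tilde\phi]|_{\cal P}$ lies in the realizable range for $M_{(L+1)n}(C([0,1]))$, and item (d), the scaling relation for the de la Harpe--Skandalis determinant, which follows from a direct computation with definition \ref{HS-1}.
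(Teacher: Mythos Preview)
Your approach is genuinely different from the paper's. The paper does not reduce Theorem~\ref{Olduniq} to Theorem~\ref{NMT1} at all; instead it observes that $B=M_n(C([0,1]))$ has stable rank one, computable $K_0$-divisible rank, exponential divisible rank bounded via Lemma~\ref{torsionlength}, and $\mathrm{cer}(B)\le 2$, then applies Theorem~3.2 of \cite{GL} (an external stable-uniqueness theorem of Gong--Lin) as a black box, after extending $\lambda$ to a suitable length function $\Lambda$ on all of $U(M_\infty(C(X)))$. So your idea of deriving Theorem~\ref{Olduniq} internally from \ref{NMT1} via stabilization by point evaluations is new relative to the paper.

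The verifications (b), (c), (d) are fine; in particular the $1/(L+1)$ scaling of the determinant distance under $a\mapsto a\oplus 1_{Ln}$ is correct, since the quotient lattice $\overline{\rho_B(K_0(B))}$ scales the same way (from $(1/n)\Z$ to $(1/(L+1)n)\Z$). The real problem is exactly the one you flag in (a), and your proposed fix does not work. Hypothesis~(\ref{NT-1-1}) of \ref{NMT1} demands $[\tilde\phi]|_{\cal P}=[h]|_{\cal P}$ for an honest homomorphism $h$. Any honest homomorphism $h:C(X)\to M_N(C([0,1]))$ has $h_{*0}|_{\ker\rho_{C(X)}}=0$, because $\rho$ is injective on $K_0(M_N(C([0,1])))=\Z$. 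But a $\delta$-${\cal G}$-multiplicative $\phi$ can have $[\phi](s)\neq 0$ for $s\in\ker\rho_{C(X)}$ (Lemma~\ref{smalltrace} only makes $|\tau([\phi](s))|=|[\phi](s)|/n$ small, not zero, and $n$ is unbounded). Since your $\Phi$ is a homomorphism, $[\Phi]|_{\ker\rho_{C(X)}}=0$, so $[\tilde\phi]|_{\ker\rho_{C(X)}}=[\phi]|_{\ker\rho_{C(X)}}$ is unchanged. ``Adjoining path-evaluation summands'' cannot help: any summand that is a homomorphism has the same defect, while any summand that is not a homomorphism would destroy the conclusion that $\Phi$ is a homomorphism with finite-dimensional range.

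There are two legitimate repairs. One is to invoke the strengthened form of \ref{NMT1} mentioned in the paper's Remark following it, which drops the comparison-to-a-homomorphism clause; then your argument goes through verbatim. The other is to imitate the mechanism in Lemma~\ref{MT2}: use Lemma~\ref{102} to find a $\delta$-${\cal G}$-multiplicative $L_0$ (of controlled size, uniformly in $n$ once $\delta$ is small via Lemma~\ref{smalltrace}) with $[\phi\oplus L_0]|_{\cal P}=[H]|_{\cal P}$ for an honest $H$, then use Lemma~\ref{Foldunique} and Lemma~\ref{pointevaluation} to absorb $L_0$ together with a second compensating piece $\bar L_0$ into a genuine point-evaluation homomorphism up to $\ep$; only then do you stabilize and apply \ref{NMT1}. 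Either route closes the gap, but neither is the one-line fix you sketched.
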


\begin{proof}
This follows from Theorem 3.2 of \cite{GL}. One takes
$B=M_n(C([0,1])).$ Note that $B$ has stable rank one and
$K_0$-divisible rank $T,$ where $T: N\times N$ is defined by
$T(k,m)=[m/k]+1.$ Let $K$ be the constant described in Lemma 3.4 of
\cite{Ph1} (for $d=1$).
Pick a point $\xi\in X.$ If $n\ge K,$ we continue the argument below. If $n<K,$
define $\phi_0: C(X)\to M_{K-n}(C([0,1]))$ by $\phi_0(f)=f(\xi){\rm id}_{M_{K-n}}$ for all $f\in C(X).$
Replacing $\phi$ and $\psi$ by $\phi\oplus \phi_0$ and $\psi\oplus \phi_0$ and late absorbing
$\phi_0,$ we see that we may assume that $n\ge K.$

Then, by \ref{torsionlength}, $B$ has exponential divisible rank $E(L, k),$ where
$E(l,k)\le 2\pi/K+L/k+6\pi.$ It is also easy to see that $cer(B)\le
2.$
Then define  $\Lambda: U(M_{\infty}(C(X))\to \R_+$ as follows:

Let $\Pi: U(M_{\infty}(C(X)))\to K_1(C(X))$ be the quotient map and let
$J: K_1(C(X))\to \cup_{n=1}^{\infty} U(M_n(C(X)))/CU(C(X))$ be the splitting map (see \ref{Harp-Scand}). If $v\in U(M_n(C(X)))$ and $\Pi(v)\not=0,$ define $v_0=v(J\circ \Pi(v^*))_c,$ where $J\circ \Pi(v^*)_c\in U_c(K_1(C(X))).$
Define $\Lambda: U(M_{\infty}(C(X)))\to \R_+$ as follows:
\beq
\Lambda(v)&=&2{\rm cel}(v)+1\,\,\, {\rm if}\,\,\, v\in \cup_{n=1}^{\infty} U_0(M_n(C(X)))\andeqn\\
\Lambda(v)&=&\lambda(J\circ \Pi(v)_c)+6\pi+2{\rm cel}(v_0)+1\,\,\,{\rm if}\,\,\, \Pi(v)\not=0,
\eneq
where ${\rm cel}(v)$ and ${\rm cel}(v_0)$ is the exponential length
of $v$ and $v_0$ in $\cup_{n=1}^{\infty} U_0(M_n(C(X))),$ respectively.
%

Note that, for any finite subset ${\cal V}\subset U(M_m(C(X)))$ (for some integer $m\ge 1$), if $\dt$ is sufficiently small and ${\cal G}$ is sufficiently large (depends only on ${\cal V}$),
\beq\label{Old-n1}
{\rm cel}(\phi(v)\psi(v)^*)\le 2{\rm cel}(v)+1/4\le \Lambda(v) \rforal v\in {\cal V}\andeqn \Pi(v)=0.
\eneq
Otherwise, if $\Pi(v)\not=0,$  $v=v_cv_0$ for some $v_c\in U_c(K_1(C(X)))$ and $v_0\in U_0(M_{\infty}(C(X))).$ Thus, if $v_c\in {\cal U},$ $\dt$ is sufficiently small and
${\cal G}$ is sufficiently large (depends only on ${\cal V}$ and ${\cal U}$),
\beq\label{Old-n2}
{\rm cel}(\phi(v)\psi(v^*))&=&{\rm cel}(\phi(v_cv_0)\psi(v_0^*v_c^*))\\
&\le&{\rm cel}(\phi(v)\psi(v)^*)+1/4+{\rm cel}(\phi(v_c)\psi(v_c^*)\\
&\le & 2{\rm cel}(v)+1/4+1/4+ \lambda(v_c)+6\pi\le \Lambda(v).
\eneq

Therefore we can apply Theorem 3.2 of \cite{GL} directly (and the point-evaluation $f\mapsto f(\xi) {\rm id}_{M_k}$ will be absorbed into $\Phi$).

\end{proof}

The following is a folklore. It is a special case of Theorem 3.2 of \cite{GL}. It also follows from \ref{Lijram}.
We state here for the convenience for our proofs.

\begin{lem}\label{Foldunique}
Let $X$ be a compact metric space, let $\ep>0$ and let ${\cal F}\subset C(X)$ be a finite subset.
There exists $\dt>0,$ a finite subset ${\cal G}\subset C(X)$  and a finite subset ${\cal P}\subset \underline{K}(C(X))$ which forms a
$KL$-triple for $C(X)$ and an integer $N$ satisfying the following:
Suppose that $\phi: C(X)\to F$ is a unital $\dt$-${\cal G}$-multiplicative \morp, where $F$ is a finite dimensional
\CA\, such that
$$
[\phi]|_{\cal P}=[H]|_{\cal P}
$$
for some unital \hm\, $H: C(X)\to F.$ Then there exists a unital \hm\, $\Phi: C(X)\to M_N(F)$ and
a unital \hm\, $h: C(X)\to M_{N+1}(F)$ such that
$$
\|\phi(f)\oplus \Phi(f)-h(f)\|<\ep
$$
for all $f\in {\cal F}.$
\end{lem}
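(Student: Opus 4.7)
The lemma is essentially a direct consequence of Theorem 3.2 of \cite{GL} (the same tool deployed in the proof of \ref{Olduniq}), specialized to the finite-dimensional target $F$ and applied with the reference homomorphism $H$ in place of the second map.

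\textbf{Step 1 (reduction).} Since $F$ is finite-dimensional, write $F=\bigoplus_{i=1}^k M_{n_i}$ and decompose $\phi=\bigoplus_i \phi_i$, $H=\bigoplus_i H_i$ along the summands. If we prove the lemma for each $(\phi_i,H_i)$ with uniform constants $\delta,{\cal G},{\cal P},N$, the direct sum of the resulting $\Phi_i$'s and $h_i$'s gives the statement for $F$ itself. So we may assume $F=M_n$, provided the constants produced are independent of $n$.

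\textbf{Step 2 (application).} Apply Theorem 3.2 of \cite{GL} to the source $C(X)$ and target $B=F$, with input $\epsilon,{\cal F}$ and reference map $\psi=H$. All structural hypotheses on $B$ hold uniformly for any finite-dimensional $F$: stable rank one, $cer(F)\le 2$, trivial $K_1$, and divisibility in $K_0$. Because $K_1(F)=0$, the de la Harpe--Skandalis type data (the analog of the map $\lambda$ of \ref{Olduniq}) trivializes, so the only condition to verify is the $KL$-equation $[\phi]|_{\cal P}=[H]|_{\cal P}$, which is precisely the hypothesis of our lemma. The theorem then produces $\delta,{\cal G},{\cal P},N$ depending only on $\epsilon,{\cal F},X$, together with a unital homomorphism $\Phi:C(X)\to M_N(F)$ with finite-dimensional range and a unitary $U\in M_{N+1}(F)$ such that
\[
\|U^*(\phi(f)\oplus\Phi(f))U-(H(f)\oplus\Phi(f))\|<\epsilon\qquad(f\in{\cal F}).
\]
Setting $h:=\operatorname{Ad}(U)\circ(H\oplus\Phi)$ yields a unital homomorphism $C(X)\to M_{N+1}(F)$ with $\|\phi(f)\oplus\Phi(f)-h(f)\|<\epsilon$ for all $f\in{\cal F}$, which is the desired conclusion.

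\textbf{Main obstacle.} The only delicate point is checking that the constants furnished by Theorem 3.2 of \cite{GL} are uniform across all finite-dimensional $F$; this reduces to noting that the proof of that theorem depends on the target only through the structural invariants listed above, all of which are uniform over finite-dimensional algebras. Alternatively, one can avoid the appeal to \cite{GL} altogether: take $\Phi$ to be a point-evaluation homomorphism at an $\eta$-dense finite subset $\{z_1,\ldots,z_N\}\subset X$ (for an $\eta$ chosen small enough relative to $\epsilon,{\cal F}$), so that $\phi\oplus\Phi:C(X)\to M_{N+1}(F)$ has spectral measure bounded below by $1/(N+1)$ on all balls of radius $\ge\eta$, then apply \ref{Lijram} to obtain $h$ close to $\phi\oplus\Phi$. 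This variant is conceptually identical but requires an iterative choice of the constants $(\eta_1,\sigma_1,\eta_2,\sigma_2)$ to match $N$ with the net size --- purely bookkeeping, but somewhat tedious.
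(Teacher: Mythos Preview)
Your proposal is correct and matches the paper's own justification exactly: the paper states (without further detail) that the lemma is a special case of Theorem 3.2 of \cite{GL} and that it also follows from \ref{Lijram}, and you have supplied the details for both of these routes. The only thing to note is that the paper does not spell out the reduction to $F=M_n$ or the verification of the structural hypotheses on $F$, so your write-up is in fact more complete than what appears there.
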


The following is an easy  folklore.

\begin{lem}\label{Folk}
Let $X$ be a compact metric space, $\ep>0$ be a positive number and let ${\cal F}\subset C(X)$ be a finite subset.
Then, there exists a finite subset $\{x_1, x_2,...,x_m\}\subset X$  
satisfying the following:
For any unital \hm\, $\phi: C(X)\to A$ (for any unital \CA\, $A$)  with finite dimensional range and any 
$K\ge m,$ 
there is a unital \hm\, $\psi: C(X)\to M_K$ with finite dimensional range and a unitary $u\in M_{K+1}(A)$ 
such that
$$
\|u^*(\phi(f)\oplus \psi(f))u-H(f)\|<\ep\rforal f\in {\cal F},
$$
where $H(f)=\sum_{i=1}^m f(x_i)p_i$ for all $f\in C(X)$  and 
where $p_1, p_2,...,p_m\in M_{k+1}(A)$ is  a set of mutually orthogonal projections such that
$1_A\lesssim p_i$ (for all $i$) and $\sum_{i=1}^m p_i=1_{M_{K+1}}.$
\end{lem}

\begin{proof}
Choose $\eta>0$ such that
\beq\label{44-1}
|f(x)-f(x')|<\ep/2\rforal f\in {\cal F},
\eneq
provided that ${\rm dist}(x, x')<2\eta.$ 
Choose a finite $\eta/2$ net $\{x_1, x_2,...,x_m\}$ of $X.$ 
Write $\phi(f)=\sum_{i=1}^n f(y_i)e_i$ for all $f\in C(X),$ where 
$\{y_1,y_2,...,y_n\}\subset X$ and $\{e_1, e_2,...,e_n\}\subset A$ is a set of mutually orthogonal projections.
Let $Y_1, Y_2,...,Y_{m}$ be  mutually disjoint subsets of $\{y_1, y_2,...,y_n\}$ such 
that $Y_j\in O(x_j, \eta),$ $j=1,2,...,m.$ Note that $Y_j$ could be empty.
  We rewrite 
\beq\label{44-2}
\phi(f)=\sum_{i=1}^{m} (\sum_{y_j\in Y_i} f(y_j)e_j)
\eneq
Denote $E_i=\sum_{y_j\in Y_i} e_j.$   Note 
that if $Y_i=\emptyset,$  $E_i=0.$ Now let $K\ge m.$ 
Choose mutually orthogonal $p_1', p_2',...,p_{K+1}'$ in $M_{K+1}(A)$  such that each $p_i$ is unitarily equivalent to
$1_A.$  Let $E_i'\le p_i'$ be such that $E_i'$ is unitarily equivalent to $E_i,$ $i=1,2,...,m.$
Define  $p_i=p_i',$ $i=1,2,...,m-1$ and  $p_m=\sum_{j=m}^{K+1} p_j'$ and define 
$q_i=p_i-E_i',$ $i=1,2,...,m.$ 
Define 
\beq\label{44-3}
\psi(f)=\sum_{i=1}^{m} f(x_i)q_i\andeqn\\
H(f)=\sum_{i=1}^m f(x_i)p_i
\eneq
One then ready to verify that $\psi$ and $H$ meet the requirements.
\end{proof}

{\it Added in proof: The author would like to thank Junping Liu who brought the issue of the above lemma.  He 
also benefited from a conversation with Guihua Gong about the same issue.}

\begin{lem}\label{2Olduniq}
Let $X$ be a compact metric space. Let $\lambda: \bigcup_{n=1}^{\infty}U(M_n(C(X)))\to \R_+$ be a map.
For any $\ep>0$ and any finite
subset ${\cal F}\subset C(X),$ there exist $\dt>0$ a finite subset
${\cal G}\subset C(X),$ a finite subset ${\cal P}\subset
\underline{K}(C(X))$ and a finite subset of unitaries ${\cal
U}\subset U_c(C(X)),$  a finite subset
$\{x_1, x_2,...,x_m\}\subset X$ and an integer $L>0$ satisfying the following condition:
if $\phi,\,\psi: C(X)\to A$ (for any  unital separable simple \CA\,$A$  with tracial rank at most one)  are
unital $\dt$-${\cal G}$-multiplicative \morp s such that
\beq\label{2Olduni-1}
[\psi]|_{\cal P}=[\phi]|_{\cal P}\tand\\\label{2Olduniq-2}
 {\rm
dist}(\langle\phi(u)\rangle,\langle \psi(u)\rangle)\le \lambda(u)
\eneq
for all $u\in {\cal U},$
 then, for any set of mutually orthogonal projections
 $p_1, p_2,...,p_m\in M_{K}(A)$ ($K\ge mL$) with
 $[p_i]\ge L[1_A],$ $i=1,2,...,m,$  and $\sum_{i=1}^m p_i=1_{M_K(A)},$ there is
 a unitary
$U\in M_{K+1}(A)$ such that
\beq\label{Olduni-2+}
\|U^*{\rm diag}(\phi(f), H(f))U-{\rm diag}(\psi(f),
H(f))\|<\ep
\eneq
for all $f\in {\cal F},$ where
$H(f)=\sum_{i=1}^m f(x_i)p_i$ for all $f\in C(X).$
\end{lem}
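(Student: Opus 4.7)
The plan is to combine the interval-algebra uniqueness of Theorem \ref{Olduniq} with the defining property of $TR(A)\le 1$: compress $\phi$ and $\psi$ into an interval subalgebra, and use the point-evaluation block $H$ to absorb both the tracially small complement and the auxiliary finite-dimensional-range homomorphism produced by Theorem \ref{Olduniq}. First I would fix an $\eta$-dense finite subset $\{\xi_1,\dots,\xi_m\}$ of $X$ with $\eta$ so small that $|f(x)-f(\xi_i)|<\ep/16$ on ${\cal F}$ whenever $\di(x,\xi_i)<\eta$, and feed $(\ep/4,{\cal F},\lambda)$ into Theorem \ref{Olduniq} to extract parameters $\dt_1,{\cal G}_1,{\cal P}_1,{\cal U}_1,L_1$; after enlargement these supply the ${\cal G},{\cal P},{\cal U}$ of the lemma, and $L$ is taken $\ge L_1$ plus a buffer determined by the $K_0$-comparison below. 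The constant $\dt$ is chosen so that the tracial-rank cutdown described next still yields $\dt_1$-${\cal G}_1$-multiplicative maps with preserved trace and determinant comparisons.

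Next, applying the defining property of $TR(A)\le 1$ to the finite set $\phi({\cal G})\cup\psi({\cal G})$, I would extract a projection $e\in A$ and an interval subalgebra $B\subset eAe$ with $1_B=e$ such that $e$ almost commutes with $\phi({\cal G})\cup\psi({\cal G})$, the compressions $e\phi(\cdot)e$ and $e\psi(\cdot)e$ lie within $\dt_1/4$ of $B$ on ${\cal G}_1$, and $\tau(1-e)$ is uniformly small on $T(A)$. Both hypotheses $[\phi]|_{\cal P}=[\psi]|_{\cal P}$ and the determinant distance bound on ${\cal U}$ transfer to the almost multiplicative cutdowns $\phi_B,\psi_B:C(X)\to B$, with losses controlled by $\dt$ and $\tau(1-e)$. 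Theorem \ref{Olduniq} then yields a finite-dimensional-range homomorphism $\Phi_0:C(X)\to M_{L_1}(B)$ and a unitary $V\in M_{L_1+1}(B)$ conjugating $\phi_B\oplus\Phi_0$ to $\psi_B\oplus\Phi_0$ to within $\ep/4$ on ${\cal F}$.

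Since $\Phi_0$ has finite-dimensional range, $\Phi_0(f)=\sum_j f(y_j)q_j$; replacing each $y_j$ by its nearest $\xi_{i(j)}$ and regrouping costs at most $\ep/16$ on ${\cal F}$ and gives $\Phi_0(f)\approx\sum_{i=1}^m f(\xi_i)\tilde q_i$ with $[\tilde q_i]\le L_1[1_A]$. Simplicity of $A$ together with $TR(A)\le 1$ forces weak unperforation of $K_0(A)$; with $L$ sufficiently large and $\tau(1-e)$ correspondingly small, this yields $[\tilde q_i]+[1-e]\le [p_i]$ for every $i$ simultaneously. Carving subprojections of each $p_i$ matching $[\tilde q_i]$ and an equal share of $[1-e]$, and using inner unitary equivalence in $M_{K+1}(A)$, one rewrites $\phi\oplus H$ as $\phi_B\oplus\Phi_0\oplus\bigl((1-e)\phi(\cdot)(1-e)\bigr)\oplus H''$ to within $\ep/4$ on ${\cal F}$, where $H''$ is the residual point-evaluation in the leftover of the $p_i$'s. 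The same rearrangement applied to $\psi\oplus H$, combined with $V$ on the interval-algebra part, reduces matters to inner unitary equivalence of the two $(1-e)$-compressions after absorption into $H''$; this is handled by applying the tracial-rank approximation a second time to realize $(1-e)A(1-e)$ as a finite-dimensional corner up to small error, then invoking Lemma \ref{Foldunique} to replace the $(1-e)$-compressions by point-evaluations and matching them using that they share the same $K$-class carried by $[1-e]\le [p_i]$.

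The main obstacle is the simultaneous projection-comparison bookkeeping --- forcing $[\tilde q_i]+[1-e]\le [p_i]$ for every $i$ from only the global information that $\tau(1-e)$ is small, while also preserving the de la Harpe-Skandalis distance bound on ${\cal U}$ through the cutdown to $B$. Weak unperforation of $K_0(A)$ and the choice of $L$ (exceeding $L_1$ by a margin dictated by $m$ and the rank of the $K$-classes in ${\cal P}$) handle the comparison; the distance transfer is controlled by taking $\dt$ small enough that the loss from the $(1-e)$-piece stays below the available slack in $\lambda$ on each $u\in{\cal U}$.
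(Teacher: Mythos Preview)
Your approach is genuinely different from the paper's, and considerably more complicated. The paper does not use the tracial-rank approximation at all: it simply observes that a unital simple \CA\ $A$ with $TR(A)\le 1$ already has stable rank one, $K_0$-divisible rank one, exponential rank $1+\ep$ (by \cite{Lnexp}), and exponential divisible rank $E(L,k)\le L/k+8\pi+1$ (by 6.10 of \cite{Lncltr1}), so that Theorem 3.2 of \cite{GL} applies directly with $B=A$ in place of $B=M_n(C([0,1]))$. The point-evaluation block $H$ is then exactly the stabilizing homomorphism supplied by that theorem, just as in the proof of Theorem \ref{Olduniq}. No cutdown, no absorption bookkeeping, no handling of a complementary corner is needed.

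Your reduction to the interval-algebra case, by contrast, creates a real difficulty in the final step. Lemma \ref{Foldunique} requires a \emph{finite-dimensional} target $F$, but a second tracial-rank cutdown of $(1-e)A(1-e)$ produces another interval subalgebra together with yet another small complementary corner; invoking \ref{Foldunique} there is not valid, and iterating the cutdown leads to an infinite regress rather than a termination. To close the argument you would need an independent statement that the two $(1-e)$-compressions become approximately unitarily equivalent after adding a sufficiently large point-evaluation block---but that is precisely the lemma you are proving, only in a hereditary subalgebra. The transfer of $[\phi]|_{\cal P}=[\psi]|_{\cal P}$ and of the determinant bound to the cutdowns $\phi_B,\psi_B$ is also not automatic (it needs something like the last part of Lemma \ref{Dig1} and Lemma \ref{length}), so the ``losses controlled by $\dt$'' clause hides nontrivial work. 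The paper's route avoids all of this by applying \cite{GL} to $A$ itself.
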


\begin{proof}
The proof follows  exactly the same way as that of \ref{Olduniq}. Note that  it follows from
\cite{Lnexp} that $M_j(A)$ has  exponential rank $1+\ep$ for every integer $j\ge 1.$  Also,  by
\cite{Lncltr1}, $A$ has stable rank one,  $K_0$-divisible rank  one, exponential divisible rank $E(L, k)=L/k+8\pi+1$
(see 6.10 of \cite{Lncltr1}, or derive it from \ref{torsionlength} directly). Thus Theorem 3.2 of \cite{GL} 
(together with \ref{Folk}) can also be applied as in the proof of \ref{Olduniq}. 

\end{proof}



\begin{lem}\label{smalltrace}
Let $X$ be a compact metric space  and let
 $s_1,s_2,...,s_m\in {\rm ker}\rho_{C(X)}$ be a finite subset.
 For any $d>0,$ there is $\dt>0$ and ${\cal G}\subset C(X)$ satisfying the following:
For any unital \CA\, $A$ with $T(A)\not=\emptyset$ and any unital $\dt$-${\cal G}$-multiplicative
\morp\, $L: C(X)\to A,$  one has that
\beq\label{smalltr-1}
\tau([L](s_j))<d\tforal \tau\in T(A),\,\,\,j=1,2,...,m.
\eneq

\end{lem}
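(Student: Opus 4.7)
The plan is to reduce the assertion to the elementary observation that every unital positive linear functional on the commutative \CA\, $C(X)$ is automatically a tracial state, so the hypothesis $s_j\in{\rm ker}\,\rho_{C(X)}$ can be applied directly to $\tau\circ L$, even though $L$ itself is only approximately multiplicative.

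First, represent each $s_j\in K_0(C(X))$ as $[p_j]-[q_j]$ with projections $p_j,q_j\in M_{n_j}(C(X))$, and put $N=\max_j n_j$. Choose a finite subset ${\cal G}\subset C(X)$ containing all the matrix entries of the $p_j,q_j$, and $\dt>0$ small, so that $(\dt,{\cal G},\{s_1,\ldots,s_m\})$ forms a local $\underline{K}$-triple in the sense of \ref{KKdef}, and so that for every unital $\dt$-${\cal G}$-multiplicative \morp\, $L:C(X)\to A$ the amplified images $L(p_j),L(q_j)\in M_{n_j}(A)$ lie within operator norm $d/(4N)$ of projections $P_j,Q_j\in M_{n_j}(A)$ representing
$$
[L](s_j)=[P_j]-[Q_j]\in K_0(A).
$$
This is standard functional calculus on near-idempotents once the matrix entries of $p_j,q_j$ are included in ${\cal G}$.

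Now fix such an $L$ and any $\tau\in T(A)$, identified as usual with $\tau\otimes{\rm Tr}$ on matrix amplifications. The composite $\tau\circ L:C(X)\to\C$ is a unital positive linear functional, hence a state on $C(X)$; commutativity of $C(X)$ makes it tracial. The hypothesis $s_j\in{\rm ker}\,\rho_{C(X)}$ applied at this tracial state therefore reads
$$
0=\rho_{C(X)}(s_j)(\tau\circ L)=((\tau\circ L)\otimes{\rm Tr})(p_j)-((\tau\circ L)\otimes{\rm Tr})(q_j)=\tau(L(p_j))-\tau(L(q_j)).
$$

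Combining this vanishing with the projection approximation,
$$
|\tau([L](s_j))|=|\tau(P_j)-\tau(Q_j)|\le|\tau(P_j-L(p_j))|+|\tau(L(q_j)-Q_j)|<2N\cdot\frac{d}{4N}=\frac{d}{2}<d,
$$
where the trace extended to $M_{n_j}(A)$ via $\tau\otimes{\rm Tr}$ is bounded in absolute value by $n_j$ times the operator norm. The only care needed is to balance the two roles of $(\dt,{\cal G})$, making the K-theoretic class $[L](s_j)$ well-defined while simultaneously keeping $L(p_j),L(q_j)$ close enough to $P_j,Q_j$ that the matrix-amplification factor $N$ does not spoil the trace estimate; there is no deeper obstacle.
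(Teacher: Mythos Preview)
Your proof is correct and takes a genuinely different, more elementary route than the paper. The paper argues by contradiction: assuming a sequence of counterexamples $L_n:C(X)\to A_n$, it assembles them into a unital \hm\ from $C(X)$ into $\prod A_n/\oplus A_n$, takes a weak-$*$ limit of the offending traces, and obtains a tracial state on the quotient at which the image of $s_j$ must vanish, contradicting the assumed lower bound. Your argument is direct: the key observation is that $\tau\circ L$ is already a state on the commutative algebra $C(X)$ and hence automatically tracial, so $s_j\in{\rm ker}\,\rho_{C(X)}$ gives $\tau(L(p_j))=\tau(L(q_j))$ immediately, and the near-projection estimate finishes. Your approach is shorter, avoids the sequence/quotient machinery, and yields an explicit dependence of $\dt$ on $d$ and the chosen projection representatives of the $s_j$. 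The paper's method, on the other hand, does not exploit commutativity of the source algebra and would adapt without change if $C(X)$ were replaced by a noncommutative \CA, where your shortcut (every state is tracial) fails.
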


\begin{proof}
There is an integer $m_0\ge 1$ and projections
$p_i, q_i\in M_{m_0}(C(X))$ such that
$$
[p_i]-[q_i]=s_i,\,\,\,i=1,2,...,m.
$$
Note that, for any $\tau\in T(A),$
$$
\tau(p_i)=\tau(q_i),\,\,\,i=1,2,...,m.
$$

Now suppose the lemma is false. Then there is $d_0>0,$ a sequence of unital \CA s $A_n$ with $T(A_n)\not=\emptyset,$ a sequence
of $\dt_n$-${\cal G}_n$-multiplicative \morp s $L_n: C(X)\to A_n$ with
$\sum_{n=1}^{\infty}\dt_n<\infty$ and $\cup_{n=1}^{\infty}{\cal G}_n$ is dense in $C(X)$ such that, for some $\tau_n\in T(A_n)$ and $1\le j\le m,$
\beq\label{smallt-2-2}
|\tau_n(L_n\otimes {\rm id}_{M_{m_0}})(p_j-q_j))|\ge d_0
\eneq
for all $n.$
Define $L: C(X)\to \prod_{n=1}^{\infty} A_n$
by $L(f)=\{L_n(f)\}$ for all $f\in C(X).$ Let $\pi: \prod_{n=1}^{\infty} A_n\to \prod_{n=1}^{\infty} A_n/\oplus_{n=1}^{\infty} A_n$ be the quotient map. Then $\pi\circ L: C(X)\to \prod_{n=1}^{\infty} A_n/\oplus_{n=1}^{\infty} A_n$ is a unital \hm.
Therefore, for any tracial state $t\in T(\prod_{n=1}^{\infty} A_n/\oplus_{n=1}^{\infty} A_n),$
\beq\label{smallt-2-}
t((\pi\circ L)\otimes {\rm id}_{M_{m_0}})(p_j-q_j))=0.
\eneq
Let $T_n: \prod_{n=1}^{\infty}A_n\to \C$ be defined by $T_n(a)=\tau_n(\pi_n(a))$ for all $a\in \prod_{n=1}^{\infty}A_n,$
where $\pi_n: \prod_{n=1}^{\infty} A_n\to A_n$ is the projection to the $n$-coordinate. Then $T_n$ is a tracial state.  Note that, for any $a\in \oplus_{n=1}^{\infty} A_n,$
\beq\label{smallt-2}
\lim_{n\to\infty} T_n(a)=0.
\eneq
Let $T$ be a limit point of $\{T_n\}.$ Then, by (\ref{smallt-2}), $T$ defines a tracial state
on $\prod_{n=1}^{\infty} A_n/\oplus_{n=1}^{\infty} A_n.$
Therefore, by (\ref{smallt-2-}),
$$
T(((\pi\circ L_{*0})\otimes {\rm id}_{M_{m_0}})(p_j-q_j))=0.
$$
It then follows that, for some subsequence $\{n_k\},$
$$
\lim_{k\to\infty} \tau_{n_k}((L_n\otimes {\rm id}_{M_{m_0}})(p_j-q_j))=0.
$$
This contradicts with (\ref{smallt-2-2}). The lemma follows.

\end{proof}

When $K_i(C(X))$ ($i=0,1$) is finitely generated, the following follows from
10.2 of \cite{LnApp}. We make a modification so it also applies to
the case that $K_i(C(X))$ ($i=0,1$)  is not finitely generated.

\begin{lem}\label{102}
Let $X$ be a compact metric space.
For any $\dt>0,$ any finite subset ${\cal G}\subset C(X)$ and any finite subset
${\cal P}\subset \underline{K}(C(X))$ for which the intersection of ${\rm ker}\rho_{C(X)}$ and the subgroup generated by ${\cal P} $ is generated by $g_1, g_2,...,g_k$ such that $(\dt, {\cal G}, {\cal P})$ is a $KL$-triple, there exists an integer
$N(\dt, {\cal G}, {\cal P})$ satisfies the following:

For any unital $\dt$-${\cal G}$-multiplicative \morp\, $L: C(X)\to B,$
where $B=M_n,$ or $B=M_n(C([0,1]))$ (for any integer $n\ge 1$)
with
$K=\max\{|L(g_i)|: i=1,2,...,k\},$
There exists an integer $N(K)\ge 1$ satisfying the following:
for any integer $N\ge N(K)/n,$ there exists a unital $\dt$-${\cal G}$-multiplicative \morp\, $L_0: C(X)\to M_{nN}\subset M_N(B)\subset $ such that
\beq\label{102-1}
{N(K)\over{{\rm max}\{K,1\}}} &\le & N(\dt, {\cal G}, {\cal P})\tand\\
([L]+[L_0])|_{\cal P} &= &[H]|_{\cal P}
\eneq
for some unital \hm\, $H: C(X)\to M_{1+N}(B)$ with finite dimensional range.

\end{lem}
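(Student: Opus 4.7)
My strategy is to reduce to the finitely generated $K$-theory case, namely Lemma 10.2 of \cite{LnApp}, via the AH-presentation of $C(X)$, and then lift the resulting maps back to $C(X)$ using Arveson's extension theorem and the continuous surjection from $X$ onto a finite CW complex.

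\noindent\textbf{Reduction to a finite CW complex.} Write $C(X)=\overline{\bigcup_n \imath_n(C(Y_n))}$, with each $Y_n$ a finite CW complex and $\imath_n:C(Y_n)\to C(X)$ the embedding dual to a continuous surjection $\pi_n:X\to Y_n$. Since $(\dt,{\cal G},{\cal P})$ is a $KL$-triple and ${\cal P}$ is finite, Definition \ref{KKdef} furnishes an integer $m$, a finite subset ${\cal G}'\subset C(Y_m)$ and a finite subset ${\cal P}'\subset\underline{K}(C(Y_m))$ with $(\dt,{\cal G}',{\cal P}')$ a $KK$-triple, $\imath_m({\cal G}')\supset{\cal G}$, and $[\imath_m]({\cal P}')\supset{\cal P}$. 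I lift $g_1,\dots,g_k$ to $g_1',\dots,g_k'\in K_0(C(Y_m))$; since $(\pi_m)_*$ is surjective on tracial state spaces, the $g_i'$ may be chosen in $\ker\rho_{C(Y_m)}$.

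\noindent\textbf{Application of Lemma 10.2 of \cite{LnApp} and lifting back.} The composition $L\circ\imath_m:C(Y_m)\to B$ is unital $\dt$-${\cal G}'$-multiplicative with $|(L\circ\imath_m)(g_i')|=|L(g_i)|\le K$. Since $K_i(C(Y_m))$ is finitely generated, Lemma 10.2 of \cite{LnApp} applies and produces an integer $N(K)$ together with a bound $N(K)/\max\{K,1\}\le N'$, where $N'$ depends only on $(\dt,{\cal G}',{\cal P}')$; I set $N(\dt,{\cal G},{\cal P}):=N'$. It also yields, for any $N\ge N(K)/n$, a $\dt$-${\cal G}'$-multiplicative \morp\ $L_0':C(Y_m)\to M_{nN}$ and a unital \hm\ $H':C(Y_m)\to M_{1+N}(B)$ with finite-dimensional range satisfying $([L\circ\imath_m]+[L_0'])|_{{\cal P}'}=[H']|_{{\cal P}'}$. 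Since $H'$ has finite-dimensional range, $H'(f)=\sum_j f(y_j)q_j$ with $y_j\in Y_m$; picking $x_j\in X$ with $\pi_m(x_j)=y_j$ and setting $H(g)=\sum_j g(x_j)q_j$ gives a unital finite-dimensional-range \hm\ $H:C(X)\to M_{1+N}(B)$ with $H\circ\imath_m=H'$. For $L_0$, since $M_{nN}$ is a finite-dimensional \CA\ (hence injective as an operator system), Arveson's extension theorem extends $L_0'$ from the unital subalgebra $\imath_m(C(Y_m))\subset C(X)$ to a ucp map $L_0:C(X)\to M_{nN}$ with $L_0\circ\imath_m=L_0'$; for $g_1,g_2\in{\cal G}$, writing $g_i=\imath_m(g_i')$ with $g_i'\in{\cal G}'$, one gets $\|L_0(g_1g_2)-L_0(g_1)L_0(g_2)\|=\|L_0'(g_1'g_2')-L_0'(g_1')L_0'(g_2')\|<\dt$, so $L_0$ inherits $\dt$-${\cal G}$-multiplicativity.

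\noindent\textbf{Verification and main obstacle.} For any $\kappa\in{\cal P}$ write $\kappa=[\imath_m](\kappa')$ with $\kappa'\in{\cal P}'$; then using $L_0\circ\imath_m=L_0'$ and $H\circ\imath_m=H'$,
$$
([L]+[L_0])(\kappa)=([L\circ\imath_m]+[L_0'])(\kappa')=[H'](\kappa')=[H](\kappa),
$$
and the bound $N(K)/\max\{K,1\}\le N(\dt,{\cal G},{\cal P})$ is inherited from the finitely generated case. The main technical point is the reduction step itself: one must choose canonical auxiliary data $(m,{\cal G}',{\cal P}')$ from the $KL$-triple condition so that the resulting constant $N'$ really depends only on $(\dt,{\cal G},{\cal P})$, and one must check that the Arveson-extended lift $L_0$ preserves the relevant multiplicativity on ${\cal G}$, which works here precisely because ${\cal G}\subset\imath_m({\cal G}')$.
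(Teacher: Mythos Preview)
Your proposal is correct and takes essentially the same approach as the paper: reduce to a finite CW complex $Y_m$ via the inductive-limit presentation of $C(X)$, apply Lemma~10.2 of \cite{LnApp} there, and lift the correction map $L_0'$ and the point-evaluation $H'$ back to $C(X)$. The only notable difference is that you arrange the projections $\pi_m:X\to Y_m$ to be surjective (legitimate, via the standard presentation of a compact metric space as an inverse limit of polyhedra), which cleanly gives both that the lifts $g_i'$ lie in $\ker\rho_{C(Y_m)}$ and that $L_0'$ is already defined on the subalgebra $\imath_m(C(Y_m))\subset C(X)$ for the Arveson step; the paper instead allows $\imath_m$ to have a kernel and compensates by decomposing $Y_m$ into its connected components $Z_i$, choosing base points $\xi_i\in Z_i\cap Y$, and applying 10.2 of \cite{LnApp} componentwise.
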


\begin{proof}
Write
$C(X)=\lim_{n\to\infty}C(Y_n),$ where each $Y_n$ is a finite CW complex.  We use $\imath_m: C(Y_m)\to C(X)$ for the \hm\, given by the inductive limit system. Without loss of generality, we may assume that
${\cal G}\subset \imath_m(C(Y_m))$ for some $m\ge 1.$ Let ${\cal G}'\subset C(Y_m)$ be a finite subset such that
$\imath_m({\cal G}')={\cal G}.$
We may further assume that ${\cal P}\subset [\imath_m](\underline{K}(C(Y_m)))$ and ${\cal P}'\subset \underline{K}(C(Y_m))$ is a finite subset such that
 $[\imath_m]({\cal P}')={\cal P}.$ As defined, we also assume that
 $(\dt, {\cal G}')$ is a $KK$-triple for $C(Y_m).$

Let $\imath_m(C(Y_m))\cong C(Y),$ where $Y$ is a compact subset of $Y_m.$ Note that $\imath_m$ induces an embedding $\imath: C(Y)\to C(X).$
Denote by $s: X\to Y$ the surjective continuous map given by
$\imath,$ i.e, $\imath(f)(y)=f(s(y))$ for all $f\in C(Y).$

Suppose that $Y_m$ is a finite disjoint union of connected finite CW complexes $Z_1, Z_2,...,Z_l.$  One can choose $\xi_i\in Z_i$ such that
$\xi_i\in Y,$ $i=1,2,...,l.$  There are $s_1,s_2,...,s_k\in
\cup_{i=1}^l K_0(C(Z_i\setminus \{\xi_i\}))$ such that
$[\imath_m](s_i)=g_i,$ $i=1,2,...,k.$
Write $K_0(C(Z_i\setminus \{\xi_i\}))=\Z^{k(i)}\oplus G_i,$ where
$G_i$ is the torsion subgroup.
Since $K_0(\C)=\Z$ and $K_1(\C)=\{0\},$ any \hm\, from $K_i(C(Y_m))$ into $K_i(\C)$ vanishes on $Tor(K_i(C(Y_m))),$ $i=0,1.$
To simplify the notation, without loss of generality, we may assume
that $s_1, s_2,...,s_k$ are the standard generators
for $\oplus_{i=1}^l\Z^{(k(i)}.$
We may assume that ${\cal G}_i'\subset C(Z_i)$ is a finite subset
such that $\oplus_{i=1}^l {\cal G}_i'={\cal P}'$ and
${\cal P}_i'\subset \underline{K}(C(Y_m))$ is a finite subset such that
$\oplus_{i=1}^l {\cal P}_i'={\cal P}'.$

Applying 10.2 of \cite{LnApp} to each component $Z_i,$
we obtain an integer $N_i(\dt, {\cal G}_i', {\cal P}_i')$ given by
10.2 of \cite{LnApp}.
Let $N(\dt, {\cal G}, {\cal P})=\sum_{i=1}^l N_i(\dt, {\cal G}_i', {\cal P}_i').$

Now let $L: C(X)\to B$ be a $\dt$-${\cal G}$-multiplicative \morp\,.
 Put $L'=L\circ \imath_m: C(Y_m)\to M_n.$
 Let $\kappa\in Hom_{\Lambda}(\underline{K}(C(Y_m)), \underline{K}(B))$
be given by $L'.$
Let $\kappa_1\in KK(B, \C)$ be given by a point-evaluation, if $B=M_n(C([0,1]),$ or $\kappa_1$ is given by the identity, if $B=M_n.$
In either cases, one may view $\kappa_1$ is an identity on $\underline{K}(B)=\underline{K}(\C).$
Put
$$
K=\max\{|\kappa(s_j)|: j=1,2,...,k\}.
$$
It follows from 10.2 of \cite{LnApp} that there exists an integer $N(K)\ge 1$ and
unital $\dt$-${\cal G}'$-multiplicative \morp\, $L_0': C(Y_m)\to M_{K(n)}$ such that
\beq\label{102-n1}
{N(K)\over{{\rm max}\{K, 1\}}} &\le &  N(\dt, {\cal G}, {\cal P})
\andeqn\\
{[L_0']}|_{\underline{K}(C_0(Z_i\setminus\{\xi_i\}))} &= &
-(\kappa_1\times \kappa)|_{\underline{K}(C_0(Z_i\setminus \{\xi_i\}))},\,\,\,i=1,2,...,l.
\eneq
 If $N\ge N(K)/n,$ by adding some point-evaluation, if necessary,
we may assume that $L_0'$ maps $C(Y_m)$ into a \SCA\,  $D\cong M_{nN}$ and $D$ is a \SCA\, of $M_N(B)$ with $1_D=1_{M_N(B)}.$
 Then, viewing $L_0'$ maps $C(Y_m)$ into $M_N(B),$
 \beq\label{102-n2}
 \kappa+ [L_0']|_{\underline{K}(C_0(Z_i\setminus\{\xi_i\}))}=0.
 \eneq
There is a point-evaluation $h_0: C(Y_m)\to M_{n+N}(B)$ at  $\{\xi_1, \xi_2,...,\xi_l\}$  such that
\beq\label{102-n3}
[L'\oplus L_0']=[h_0].
\eneq
We may write
$$
h_0(f)=\sum_{i=1}^l f(\xi_i)p_i\tforal f\in C(Y_m),
$$
where $p_1, p_2,...,p_l$ are mutually orthogonal projections in $M_{N+1}(B).$
There is a unital \morp\, $L_0: C(X)\to M_N(B)$ such that
$$
L_0\circ \imath_m|_{C(Y_m)}=L_0'.
$$
Note that $L_0$ is $\dt$-${\cal G}$-multiplicative.
Define $H: C(X)\to M_{N+1}(B)$ by
$$
H(f)=\sum_{i=1}^l f(s(\xi_i))p_i\tforal f\in C(X).
$$
Then
$$
[L\oplus L_0]|_{\cal P}=[H]|_{\cal P}.
$$

\end{proof}

\begin{lem}\label{pointevaluation}
Let $X$ be a compact metric space, let $\ep>0,$ let ${\cal F}\subset
C(X)$ be a finite subset. There exists a finite subset
$\{x_1,x_2,...,x_m\}\subset X$  ($m\ge 1$) satisfying the following:
 for any unital \hm\,
$h_0:C(X)\to C([0,1], M_n)$  with finite dimensional range,
\beq\label{pointev-1}
\|(h_0\oplus h_1)(f)-\sum_{i=1}^{m}f(x_i)p_i\|<\ep\tforal f\in {\cal
F},
\eneq
where $h_1: C(X)\to C([0,1], M_{(m-1)n})$ is a unital \hm\, with finite
dimensional range and $\{p_1,p_2,...,p_{m}\}$ is a set of mutually orthogonal rank $n$ projections.

\end{lem}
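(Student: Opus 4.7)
\bigskip
\noindent\textbf{Proof proposal for Lemma~\ref{pointevaluation}.}

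The plan is to choose the points $\{x_1,\dots,x_m\}$ first using only uniform continuity of the test functions, and then, given any $h_0$, to build $h_1$ by a rank-balancing argument that exploits the fact that $h_0$ has finite-dimensional (hence commutative) range. First, by uniform continuity of each $f\in\mathcal{F}$ on the compact set $X$, pick $\delta>0$ so that $\operatorname{dist}(x,y)<\delta$ implies $|f(x)-f(y)|<\epsilon$ for every $f\in\mathcal{F}$, and then pick a $\delta$-net $\{x_1,\dots,x_m\}\subset X$, i.e.\ $X=\bigcup_{i=1}^m O(x_i,\delta)$. This choice depends only on $\mathcal{F}$ and $\epsilon$, as required.

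Now let $h_0\colon C(X)\to C([0,1],M_n)$ be any unital homomorphism with finite-dimensional range. Since $C(X)$ is commutative, the image of $h_0$ is a finite-dimensional commutative unital $C^*$-subalgebra of $C([0,1],M_n)$, hence of the form $\mathbb{C}\{e_1,\dots,e_k\}$ for mutually orthogonal projections $e_1,\dots,e_k\in C([0,1],M_n)$ with $\sum_j e_j=1$. Because $[0,1]$ is connected and the rank of a projection in $M_n$ takes integer values continuously, each $e_j$ has constant rank $r_j$, with $\sum_j r_j=n$. The characters of this image pull back along $h_0$ to point evaluations at certain $y_1,\dots,y_k\in X$, giving the normal form
$$
h_0(f)=\sum_{j=1}^{k}f(y_j)\,e_j,\qquad f\in C(X).
$$
For each $j$ choose $i(j)\in\{1,\dots,m\}$ with $\operatorname{dist}(y_j,x_{i(j)})<\delta$, and set $n_i=\sum_{j:\,i(j)=i}r_j$, so that $\sum_{i=1}^m n_i=n$.

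Since $\sum_{i=1}^m(n-n_i)=mn-n=(m-1)n$, we may fix mutually orthogonal (scalar) projections $f_1,\dots,f_m\in M_{(m-1)n}$ with $\operatorname{rank}(f_i)=n-n_i$ and $\sum_i f_i=1_{M_{(m-1)n}}$, and define the unital homomorphism with finite-dimensional range
$$
h_1\colon C(X)\to C([0,1],M_{(m-1)n}),\qquad h_1(f)=\sum_{i=1}^m f(x_i)f_i.
$$
Setting $\widetilde e_i=\sum_{j:\,i(j)=i}e_j$ (a rank-$n_i$ projection in $C([0,1],M_n)$) and $p_i=\widetilde e_i\oplus f_i$ inside $C([0,1],M_{mn})$, the $p_i$ are mutually orthogonal, of constant rank $n$, and sum to $1$. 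Then
$$
(h_0\oplus h_1)(f)=\sum_{j}f(y_j)\,e_j+\sum_{i}f(x_i)f_i,\qquad
\sum_{i=1}^m f(x_i)p_i=\sum_j f(x_{i(j)})\,e_j+\sum_i f(x_i)f_i,
$$
so their difference is $\sum_j(f(y_j)-f(x_{i(j)}))e_j$, which has norm less than $\epsilon$ for $f\in\mathcal{F}$ by the choice of $\delta$. There is no real obstacle here; the only point requiring care is the rank-accounting, which works out precisely because $h_1$ is allowed to land in $M_{(m-1)n}$, exactly the slack needed to complete each block to rank $n$.
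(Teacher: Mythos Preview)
Your proof is correct and follows essentially the same approach as the paper: choose a $\delta$-net using uniform continuity, write $h_0$ in diagonal form, group the evaluation points by nearest $x_i$, and define $h_1$ to balance the ranks so that each $p_i$ has rank $n$. The only cosmetic difference is that the paper first splits each $e_j$ into rank-one subprojections (using that vector bundles over $[0,1]$ are trivial) before grouping, whereas you work directly with the possibly higher-rank $e_j$; your version is slightly cleaner in this respect.
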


\begin{proof}

 Let $\eta>0$
such that
$$
|f(x)-f(x')|<\ep/4\rforal f\in {\cal F},
$$
provided that ${\rm dist}(x, x')<\eta.$ Let $\{x_1,
x_2,...,x_m\}$ be  an $\eta$-dense subset of $X.$
Suppose that $h_0: C(X)\to C([0,1], M_n)$ is a unital \hm\, with finite dimensional range. Then there are $y_1, y_2,...,y_n\in X$ and
mutually orthogonal rank one projections $e_1, e_2,...,e_n$ such that
\beq\label{pointev-2}
h_0(f)=\sum_{i=1}^n f(y_i)e_i\rforal f\in C(X).
\eneq
Divide $\{y_1, y_2,...,y_n\}$ into $N$ disjoint subsets  $Y_1, Y_2,..., Y_N$
with $1\le N\le m$ such that
\beq\label{pointev-3}
{\rm dist}(y_i,x_j)<\eta,
\eneq
if $y_i\in Y_j.$  Let $E_j=\sum_{y_i\in Y_j} e_i$ and denote by $R_j$ the rank of $E_j,$ $j=1,2,...,N.$
Choose mutually orthogonal projections $q_1,q_2,..., q_m\in C([0,1], M_{(m-1)n})$ such that
rank of $q_j=n-R_j,$ $j=1,2,...,N$ and rank $q_j=n$ if $N<j\le n.$ Note $\sum_{j=1}^mq_j=1_{M_{(m-1)n}}.$
Define $h_1: C(X)\to C([0,1], M_{(m-1)n})$ by
$$
h_1(f)=\sum_{j=1}^n f(x_j)q_j\tforal f\in C(X).
$$
Let $p_j=E_j+q_j$ if $1\le j\le N$ and $p_j=q_j$ if $N<j\le m.$  Note that $p_j$ has rank $n$ for $j=1,2,...,m.$
One then checks that
$$
\|(h_0\oplus h_1)(f)-\sum_{k=1}^{m}f(x_k)p_k\|<\ep\rforal f\in {\cal F}.
$$

\end{proof}

\begin{lem}\label{Dig1}
Let $X$
be a compact metric space, let ${\cal P}\subset \underline{K}(C(X))$ be a finite subset and let
$G$ be the subgroup generated by ${\cal P}.$
Suppose $\Delta: (0, 1)\to (0,1)$ is  a nondecreasing function, $\eta>0$
and $\lambda_1, \lambda_2>0$ are given. Suppose that
 $g_1, g_2,...,g_k$ are generators of   $G\cap {\rm ker} \rho_{C(X)}.$

Suppose that $L,\, \Lambda: C(X)\to A$ (for some unital separable
simple \CA\, with tracial rank at most one) are two $ \dt$-${\cal
G}$-multiplicative \morp s for which $[L](g_i)$ and $[\Lambda](g_i)$  are well defined
($i=1,2,...,k$), where $\dt$ is a positive number and ${\cal G}$ is
a finite subset of $C(X),$
\beq\label{small-1}
|\tau([L](g_i))|<\sigma\tand |\tau([\Lambda](g_i))|<\sigma\tforal \tau\in T(A),\,\,\,i=1,2,...,k,
\eneq
for some $1>\sigma>0,$ and
\beq\label{small-1+}
\mu_{\tau\circ L}(O_r)\ge \Delta(r),\,\,\, \mu_{\tau\circ \Lambda}(O_r)\ge \
\Delta(r)\\\label{small-1++}
\eneq
for all $\tau\in T(A)$ and for all $r\ge \eta.$

 Then, for any $\ep>0$ and any finite
subset ${\cal F},$ any mutually orthogonal projections
$e_1,e_2,...,e_N,$
any finite subset ${\cal
H}\subset A$ and $R_0>1,$  there exists a projection $p\in A$ and a unital \SCA\,
$B=\oplus_{j=1}^m C(X_j, M_{r(j)}),$ where $X_j=[0,1],$ or $X_j$ is
a single point, with $1_B=p,$ mutually orthogonal
projections $e_1', e_2',...,e_N'\in B$ and a unital $(\dt+\ep)$-${\cal
G}$-multiplicative \morp s $\psi_1, \psi_2: C(X)\to B$ such that
\beq\label{small-1+1}
&&\|L(f)-[(1-p)L(f)(1-p)+\psi_1(f)]\|<\ep,\\\label{sm-1+2}
&&\|\Lambda(f)-[(1-p)\Lambda(f)(1-p)+\psi_2(f)]\|<\ep
\tforal f\in {\cal F},\\\label{sm-1+3}
&&\tau(1-p)<\eta,\,\,\,\tau(e_i')\ge \min\{(1-\lambda_1)\tau(e_i):\tau\in T(A)\} \tforal \tau\in T(A)\\\label{sm-1+4}
&& r(j)\ge R_0,\,\,\,j=1,2,...,k,\\\label{sm-1+5}
&& \|pe_ip-e_i'\|<\ep, t_{j,x}(e_i')\ge  \min\{(1-\lambda_1)\tau(e_i):\tau\in T(A)\} \\\label{sm-1+6}
&&|t_{j,x}([\psi_1](g_i))|<(1+\lambda_1)\sigma,
|t_{j,x}([\psi_2](g_i))|<(1+\lambda_1)\sigma\\\label{sm-1+7}
&&\,\,\,\,j=1,2,...,k\andeqn x\in X_j\\\label{small-1+10}
&&\hspace{-0.4in}\mu_{t_{j,x}\circ \psi_1}(O_r)\ge (1-\lambda_1)\Delta(r/2(1+\lambda_2)),\,\,\,\,
\mu_{t_{j, x}\circ \psi_2}(O_r)\ge (1-\lambda_1)\Delta(r/2(1+\lambda_2))
\eneq
 for all $r\ge 2(1+\lambda_2)\eta$
(We use  $t_{j,x}$ for $\tau_{j,x}\otimes {\rm Tr}_R$ on $B\otimes M_R,$
where  $t_{j,x}(f)=t\circ f(x)$ for all $f\in C(X_j, M_{r(j)}),$ for all $x\in X_j$ and $t$ is
the normalized trace on $M_{r(j)}$ and
${\rm Tr}_R$ is the standard trace on $M_R.$)

 Moreover, for any $\ep_0>0,$ one may assume that
$$
\|pa-ap\|<\ep_0\tand pap\in_{\ep_0} B\tforal a\in{\cal H}.
$$

If furthermore, $[L]|_{\cal P}=[\Lambda]|_{\cal P}$ (in $KK(C(X), A),$ then, by taking smaller $\dt$ and
larger ${\cal G},$ depending only on ${\cal P},$ one may further require that
$$
[\psi_1]|_{\cal P}=[\psi_2]|_{\cal P}\,\,\,{\rm in}\,\,\, KK(C(X), B).
$$
\end{lem}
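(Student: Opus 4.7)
The plan is to apply the definition of $TR(A)\le 1$ to a suitably rich finite subset of $A$, producing the projection $p$ and the subalgebra $B$ of the required form, and then verify the trace and $K$-theory conditions for the compressions $\psi_i\approx pL(\cdot)p$, $p\Lambda(\cdot)p$ by a trace-comparison argument combined with Lemma~\ref{keeptrace}.

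First I would set ${\cal F}'=L({\cal F})\cup\Lambda({\cal F})\cup\{e_1,\ldots,e_N\}\cup{\cal H}$ and choose $\ep'>0$ much smaller than $\ep$, $\ep_0$, $\lambda_1\sigma$, $(1-\lambda_1)\Delta(\eta)$ and than the tolerance demanded by Lemma~\ref{keeptrace} with data $(\Delta,\eta,\lambda_1,\lambda_2)$. Since $A$ is simple with $T(A)$ compact, pick a positive element $b\in A$ with $\tau(b)<\min\{\eta,(\lambda_1/2)\Delta(\eta)\}$ for every $\tau\in T(A)$. Applying $TR(A)\le 1$ to $({\cal F}',\ep',b)$ yields $p\in A$ and a unital $C^*$-subalgebra $B=\bigoplus_{j=1}^m C(X_j,M_{r(j)})\subset pAp$, with $1_B=p$, satisfying $\|pa-ap\|<\ep'$ and $\dist(pap,B)<\ep'$ for all $a\in{\cal F}'$, and $\tau(1-p)<\eta$ uniformly. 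To arrange $r(j)\ge R_0$ I would absorb small summands into larger ones by unitary conjugation in $A$ (available since $A$ is simple), at a small additive cost in $\ep'$. Define $\psi_i(f)\in B$ within $\ep'$ of the respective compressions on a sufficient finite subset of $C(X)$, and choose projections $e_i'\in B$ within $\ep$ of $pe_ip$ (which are near-projections thanks to $\|pe_i-e_ip\|<\ep'$). Properties \eqref{small-1+1}--\eqref{sm-1+5} then follow.

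For \eqref{sm-1+6} and \eqref{small-1+10} I would use the tracial correspondence between $A$ and $B$ afforded by $p$: since $A$ is simple, the map $\tau\mapsto \tau(p)^{-1}\tau|_{pAp}$ is a bijection $T(A)\to T(pAp)$, and every tracial state $t_{j,x}$ of $B$ arises, up to errors bounded in terms of $\ep'$ and $\eta$, as $\tau(p)^{-1}\tau|_B$ for some $\tau\in T(A)$ (this is where $\dist(pap,B)<\ep'$ is crucial). Combining this with $\tau(1-p)<\eta$ and the hypotheses on $L,\Lambda$ gives
\[
|t_{j,x}([\psi_1](g_i))|\le \tau(p)^{-1}\bigl(|\tau([L](g_i))|+O(\ep')\bigr)<(1+\lambda_1)\sigma,
\]
and similarly for $\psi_2$; and for an open ball $O_r$ with $r\ge 2(1+\lambda_2)\eta$ and a positive $f\in C(X)$ approximating its characteristic function,
\[
t_{j,x}(\psi_1(f))\ge \tau(p)^{-1}\bigl(\tau(L(f))-\tau(1-p)\bigr)-O(\ep'),
\]
whereupon Lemma~\ref{keeptrace} with slacks $(\lambda_1,\lambda_2)$ delivers the desired lower bound $(1-\lambda_1)\Delta(r/2(1+\lambda_2))$.

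Finally, for the concluding $K$-theory assertion, take $\dt$ small and ${\cal G}$ large, depending only on ${\cal P}$, so that $(\dt,{\cal G},{\cal P})$ is a $KL$-triple and $[\psi_i]|_{\cal P}$ is unambiguously defined. Since each $X_j$ is an interval or a point, $K_1(B)=0$ and $K_0(B)$ is torsion-free, so $[\psi_i]|_{\cal P}$ is determined by the ranks $t_{j,x}(\psi_i(e))$ for projections $e\in M_k(C(X))$ representing ${\cal P}\cap K_0(C(X))$. By the tracial correspondence above, these ranks are determined up to $O(\ep')$ by $\tau([L](e))=\tau([\Lambda](e))$, yielding $[\psi_1]|_{\cal P}=[\psi_2]|_{\cal P}$. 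The main obstacle I anticipate is the tracial-correspondence step: rigorously matching each $t_{j,x}\in T(B)$ with a compressed restriction of some $\tau\in T(A)$ and carrying the resulting errors through Lemma~\ref{keeptrace} while preserving the precise radius factor $2(1+\lambda_2)$.
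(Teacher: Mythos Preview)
Your overall structure is right, but the ``tracial correspondence'' step---which you yourself flag as the main obstacle---is not merely hard to make rigorous: it is false as stated. The restriction map $T(pAp)\to T(B)$ is essentially never surjective. For instance, if $A$ has a unique trace $\tau$, then $\tau(p)^{-1}\tau|_B$ is a single point of $T(B)$, while $B=\bigoplus_j C(X_j,M_{r(j)})$ has a continuum of extreme traces $t_{j,x}$; most of them are nowhere near a compressed trace of $A$. So the inequality
\[
|t_{j,x}([\psi_1](g_i))|\le \tau(p)^{-1}\bigl(|\tau([L](g_i))|+O(\ep')\bigr)
\]
has no $\tau\in T(A)$ making it valid, and the same objection kills the measure estimate and the $K_0$-rank argument for the final $KK$ assertion.

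The paper avoids this by working sequentially rather than with a single $(p,B)$. One takes $p_n, B_n$ with the $TR\le 1$ properties improving on all of $A$, sets $\psi_{1,n}=L_n'\circ L$ for a c.p.c.\ approximation $L_n':p_nAp_n\to B_n$, and argues by contradiction: if $|t_{j_k,x_k}([\psi_{1,n_k}](g_i))|\ge (1+\lambda_1)\sigma$ along a subsequence, form the states $T_k(a)=t_{j_k,x_k}(L_{n_k}'(a))$ on $A$. Because the approximate commutation and containment improve with $n$, any weak$^*$ limit $T$ of $\{T_k\}$ is a \emph{genuine} tracial state of $A$, and then $|T([L](g_i))|\ge(1+\lambda_1)\sigma$ contradicts the hypothesis. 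The same contradiction scheme gives the estimates on $t_{j,x}(e_i')$ and, combined with Lemma~\ref{1keeptrace}, the measure lower bounds. For $r(j)\ge R_0$ the paper invokes a structural fact about $TR\le 1$ approximations (large matrix sizes can always be arranged) rather than any ``absorbing'' of summands. Finally, for $[\psi_1]|_{\cal P}=[\psi_2]|_{\cal P}$ the paper does not go through traces at all: it takes the partial isometries in $M_{l+R}(A\otimes C_j)$ witnessing $[L]|_{\cal P}=[\Lambda]|_{\cal P}$, compresses them by $p\otimes\mathrm{id}$ into $B\otimes M_{l+R}(C_j)$ (which works once ${\cal H}$ is chosen large enough), and reads off the required equivalences directly in $K_0(B\otimes C_j)$.
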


\begin{proof}
The proof is a modification  of  that of Lemma 9.7 of \cite{LnApp}.  We repeat many arguments here.  Let $p_j, q_j\in
M_R(C(X))$ such that
$$
[p_j]-[q_j]=g_j,\,\,\,j=1,2,...,k,
$$
for some integer $R\ge 1.$
There exists of a sequence of projections $p_n\in A$ such that
\beq\label{small-4}
\lim_{n\to\infty}\|cp_n-p_nc\|=0\tforal c\in A,
\eneq
and there exists a sequence of \SCA s
$B_n=\oplus_{j=1}^{m(n)}C(X_{j,n}, M_{r(j,n)})$ (where
$X_{j,n}=[0,1]$ or $X$ is a single point) with $1_{B_n}=p_n$ such
that
\beq\label{sm-5}
&&\lim_{n\to\infty}{\rm dist}(p_ncp_n, B_n)=0\andeqn\\
&&\lim_{n\to\infty} \sup_{\tau\in T(A)}\{\tau(1-p_n)\}=0.
\eneq
Moreover, by 3.3 of \cite{Lncltr1}, we may also assume
that $r(j,n)\ge R_0$ for all $j.$
For sufficiently large $n,$ there exists a \morp\, $L_n': p_nAp_n
\to B_n$ such that
$$
\lim_{n\to\infty}\|L_n'(a)-p_nap_n\|=0 \tforal a\in A
$$
(see 2.3.9 of \cite{Lnbk}).
There are (see 2.55 and 2.5.6 of \cite{Lnbk}) mutually orthogonal projections
$e_{i,n}\in B_n$ such that
$$
\lim_{n\to\infty}\|p_ne_ip_n-e_{i,n}'\|=0,\,\,\,i=1,2,...,N.
$$
We have
\beq\label{sm-5+}
\lim_{n\to\infty}\|L(f)-[(1-p_n)L(f)(1-p_n)+L_n'\circ
L(f)]\|&=&0\andeqn\\
\lim_{n\to\infty}\|\Lambda(f)-[(1-p_n)\Lambda(f)(1-p_n)+L_n'\circ
\Lambda(f)]\|&=&0
\rforal f\in C(X).
\eneq
Define  $L_{n,R}': M_R(A)\to M_R(A)$ by $L_n'\otimes {\rm
id}_{M_R}$ and $L_R: M_R(C(X))\to M_R(A)$ by
$L_R=L\otimes {\rm id}_{M_R}.$
Suppose that  there exists a subsequence
$\{n_k\},$ $\{j_k\}$ and $\{x_k\}\in [0,1]$ such that
\beq\label{sm-6}
|t_{j_k,x_k} (L_{n_k,R}'\circ L_R(p_i-q_i))|\ge (1+\lambda_1)\sigma
\eneq
for all $k.$
Define a state $T_k: A\to \C$ by $T_k(a)=t_{j_k, x_k}(a),$
$k=1,2,....$ Let $T$ be a limit point. Note $T_k(1_A)=1.$
Therefore $T$ is a state on $A.$ Then, by (\ref{sm-6}),
\beq\label{TTL-8+}
|T([L](g_i))|\ge (1+\lambda_1)\sigma.
\eneq
However, it is easy to check that $T$ is a tracial state. This
contradicts with (\ref{small-1}).
Put $\psi_1=L'_{n}\circ L$ and $\psi_2=L'_{n}\circ \Lambda$ for some
large $n.$ Then we have shown (for the choice of large $n$)
that (\ref{small-1+1}), (\ref{sm-1+2}) and (\ref{sm-1+7}) hold.

A similar argument shows that, for some sufficiently large $n,$
$$
t_{j,x}(e_{i,n}')\ge \min\{(1-\lambda_1)\tau(e_i): \tau\in T(A)\},
$$
$i=1,2,...,N,$ for all $x\in X_j$ and $j=1,2,...,m(n).$

Moreover,  a similar argument shows that, for any finitely many  $f_1, f_2,....,f_N\in C(X)$
such that $0<f_i\le 1,$ $i=1,2,...,N,$ we may assume (by choosing large $n$) that
\beq
t_{j,x}\circ \psi_1(f_k)\ge (1-\lambda_1/2) \min\{\tau(L(f_k)): \tau\in T(A)\}\andeqn\\
t_{j,x}\circ \psi_2(f_k)\ge (1-\lambda_1/2) \min\{\tau(\Lambda(f_k)): \tau\in T(A)\}
\eneq
for all $x\in X_j,$ $j=1,2,...,m.$ By choosing sufficiently many (but finitely many) $f_j\,'s,$  using the argument
in the proof of \ref{1keeptrace}, we may assume that
\beq
\mu_{t_{j,x}\circ \psi_i}(O_r)\ge (1-\lambda_1)\Delta(\eta/2(1+\lambda_2))
\eneq
for all $r\ge 2(1+\lambda_2)\eta$ and for all $x\in X,$ $j=1,2,...,m$ and $i=1,2.$

So the first part of the lemma follows by choosing
$B$ to be $B_n,$ $p$ to be $p_n$ and $\psi_1$ to be $L_n'\circ L$
and $\psi_2$ to be $L_n'\circ \Lambda$ for
some sufficiently large $n.$ Note, by (\ref{small-4}) and (\ref{sm-5}), for any $\ep_0>0$ and any
 finite subset ${\cal H},$
we can assume that
$$
\|pa-ap\|<\ep_0\andeqn pap\in_{\ep_0} B
$$
for all $a\in {\cal H}.$

To see the last part of the lemma holds, taking a commutative
\CA\, $C$ and considering the maps $L\otimes {\rm id}_{M_l(C)}$ and
$\Lambda\otimes {\rm id}_{M_l(C)}$ from $C(X)\otimes M_l(C)$ into
$A\otimes M_l(C).$
There is $K_0\ge 1$ such that, if $x\in Tor(K_i(C(X)))\cap G,$  then
$K_0x=0.$
Let $C_1, C_2,...,C_{K_0!}$ be unital commutative \CA\, such that
$K_0(C_j)=\Z\oplus \Z/j\Z$ and $K_1(C_j)=\{0\},$ $j=1,2,...,K_0!.$
Let $l\ge 1$ be an integer  such that a set of generators of
$K_0(C(X)\otimes C_j)\cap G$ and $K_1(C(X)\otimes C_j)\cap G$ can be represented by
projections and unitaries in $M_l(C(X)\otimes C_j)=C(X)\otimes C_j\otimes M_l,$
$j=1,2,...,K_0!.$

Choose $0<\ep_1<\ep$ and a finite subset ${\cal F}_1\supset {\cal F}$
(which depends on $K_0$ and $l$ above).
Then one applies the first part of the lemma for this $\ep_1$ and ${\cal F}_1.$
For a finite subset of projections
$E_1, E_2,...,E_K\in C(X)\otimes C_j\otimes M_l,$
if in addition that $[L]|_{\cal P}=[\Lambda]|_{\cal P}$ (with sufficiently small $\dt$ and
sufficiently large ${\cal G}$),
there are partial isometries $W_1, W_2,...,W_K\in A\otimes  C_j\otimes M_{l+R}$ for some integer $R\ge 0$ such that
$$
W_iW_i^*=E_i'\oplus {\rm id}_{M_R(C_j)} \andeqn W_i^*W_i=E_i''\oplus {\rm id}_{M_R(C_j)},
$$
where $E_i'$ and
$E_i''$ are two projections such
that
$$
\|E_i'-L\otimes {\rm id}_{M_l(C_j)}(E_i)\|<1/16\andeqn \|E_i''-\Lambda\otimes {\rm id}_{M_l(C_j)}(E_i)\|<1/16,
$$
$i=1,2,...,K.$

Fix $\ep_0>0.$
One then chooses a large ${\cal H}$ so that
$$
\|pa-ap\|<\ep_0\andeqn pap\in_{\ep_0} B
$$
imply that
\beq
&&\hspace{-0.4in}\|(p\otimes {\rm id}_{M_{l+R}(C_j)})W_i-W_i(p\otimes {\rm id}_{M_R(C_j)})\|<\ep_1,\\
 &&\hspace{-0.4in}\|(p\otimes {\rm id}_{M_{l+R}(C_j)})(E_i'\otimes {\rm id}_{M_R(C_j)})-(E'_i\otimes {\rm id}_{M_R(C_j)})(p\otimes {\rm id}_{M_{l+R}(C_j)})\|<\ep_1\andeqn\\
 &&\hspace{-0.4in}\|(p\otimes {\rm id}_{M_{l+R}(C_j)})(E_i''\otimes {\rm id}_{M_R(C_j)})-(E_i''\otimes {\rm id}_{M_R(C_j)})(p\otimes {\rm id}_{M_{l+R}(C_j)})\|<\ep_1, \\
 &&\hspace{-0.4in}\|(p\otimes {\rm id}_{M_l(C_j)})E_i'-E_i'(p\otimes {\rm id}_{M_l(C_j)})\|<\ep_1\andeqn\\
 &&\hspace{-0.4in}\|(p\otimes {\rm id}_{M_l(C_j)})E_i''-E_i''(p\otimes {\rm id}_{M_l(C_j)})\|<\ep_1
 \eneq
 as well as
 \beq
 &&(p\otimes {\rm id}_{M_l(C_j)})E_i'(p\otimes {\rm id}_{M_l(C_j)})\in_{\ep_1}B\otimes M_l(C_j),\\
  &&(p\otimes {\rm id}_{M_l(C_j)})E_i''(p\otimes {\rm id}_{M_l(C_j)})\in_{\ep_1}B\otimes M_l(C_j),\\
&& (p\otimes {\rm id}_{M_{l+R}(C_j)})W_i(p\otimes {\rm id}_{M_{l+R}(C_i)})(p\otimes {\rm id}_{M_{l+R}(C_j)})\in_{\ep_1} B\otimes M_{l+R}(C_j),\\
 && (p\otimes {\rm id}_{M_{l+R}(C_j)})(E_i' \otimes {\rm id}_{M_R(C_j)} )(p\otimes {\rm id}_{M_{l+R}(C_j)})\in _{\ep_1} B\otimes M_{l+R}(C_j)\andeqn\\
&& (p\otimes {\rm id}_{M_{l+R}(C_j)})(E_i'' \otimes {\rm id}_{M_R(C_j)} )(p\otimes {\rm id}_{M_{l+R}(C_j)})\in _{\ep_1} B\otimes M_{l+R}(C_j).
 \eneq
 It follows that (with small $\ep_1$)
 there are projections $e_i', e_i''\in B\otimes M_l(C_j)$ such that
 $$
 \|e_i'-(p\otimes {\rm id}_{M_l(C_j)})E_i'(p\otimes {\rm id}_{M_l(C_j)})\|<2\ep_1,\\
  \|e_i''-(p\otimes {\rm id}_{M_l(C_j)})E_i''(p\otimes {\rm id}_{M_l(C_j)})\|<2\ep_1\andeqn
  $$
$$
[e_i]=[e_i']\,\,\,{\rm in}\,\,\, K_0(B).
$$
Therefore, one has
$$
[\psi_1]([E_i])=[\psi_2]([E_i]),\,\,\,i=1,2,...,K.
$$
 From this, one concludes that one may require that
 $$
 [\psi_1\otimes {\rm id}_{C_j)}]|_{K_0(C(X)\otimes C_j)\cap G}=[\psi_2\otimes {\rm id}_{C_j}]|_{K_0(C(X)\otimes C_j)\cap G},
 $$
 $j=1,2,...,K_0!.$ A similar argument shows that one may also require that
 $$
 [\psi_1\otimes {\rm id}_{C_j}]|_{K_1(C(X)\otimes C_j)\cap G}=[\psi_2\otimes {\rm id}_{C_j}]|_{K_1(C(X)\otimes C_j)\cap G},
 $$
 $j=1,2,...,K_0!.$ It follows that one may require that
 $$
 [\psi_1]=[\psi_2]\,\,\,{\rm in}\,\,\, KK(C(X), B).
 $$

\end{proof}



\section{The main results}

\begin{lem}\label{MT2}
Let $X$ be a compact metric space, let $\ep>0,$ $\ep_0>0,$ let $\{x_1,
x_2,...,x_m\}\subset X,$ let ${\cal F}\subset C(X)$ be a finite
subset and let $\Delta: (0,1)\to (0,1)$ be an increasing map with
$\lim_{t\to 0} \Delta(t)=0.$
Let ${\cal P}\subset \underline{K}(C(X))$ be a finite subset,
$K\ge 1$ be an integer and let $\eta_0>0.$  Then, there exists $\eta>0,$ $\dt>0,$ a finite subset
${\cal G}\subset C(X)$  satisfying the following: For any unital
$\dt$-${\cal G}$-multiplicative \morp s $L, \Lambda: C(X)\to A$ for some unital separable simple
\CA\, $A$ with tracial rank at most one
for which
\beq\label{MT2-1}
[\Lambda]|_{\cal P}=[L]|_{\cal P}
\tand
\mu_{\tau\circ L}(O_r), \mu_{\tau\circ L}(O_r)\ge \Delta(r)
\eneq
for all open balls $O_r$ with radius $1>r\ge \eta, $ and, for any $\ep_{00}>0$ and any finite subset ${\cal H}\subset A,$
 there exist mutually orthogonal projections\\ $P_1, P_2, P_3,p_1, p_2,...,p_m\in A$ with
 $P_1\oplus P_2\oplus P_3\oplus \sum_{i=1}^m p_i=1_A,$
 \beq\label{MT2-2}
 \tau(P_3)>1-\ep_0\tforal \tau\in T(A)\tand\\\label{MT2-2+}
 K[P_1\oplus P_2]\le  [p_i],\,\,\,\,\,\,i=1,2,...,m,
 \eneq
 and there exists a unital $\ep$-${\cal F}$-multiplicative \morp\,
 $\psi: C(X)\to P_2BP_2$ whose range contained in a finite dimensional \SCA,
 unital $\ep$-${\cal F}$-multiplicative \morp s $H_1, H_2: C(X)\to  P_3BP_3\subset P_3AP_3,$ where $P_2, P_3,$ $ p_1,p_2,...,p_m\in B,$
 $B=\oplus_{j=1}^N B_j,$ and
 $B_j=C(X_j, M_{r(j)})$ ($X_j=[0,1],$ or $X_j$ is a point) with
 $$
 [H_1]|_{\cal P}=[H_2]|_{\cal P}=[h_0]|_{\cal P},
 $$
 for some unital \hm\, $h_0: C(X)\to C,$
 where $C=P_3BP_3,$ $C=\oplus_{j=1}^NC_j$ and $ C_j=C(X_j, M_{r'(j)}),$
 and a unitary $W\in A$ such that
\beq\label{MT2-3}
&&\|L(f)-[(P_1L(f)P_1\oplus H_1(f)\oplus \psi(f)\oplus \sum_{i=1}^m f(x_i)p_i]\|<\ep\\
&&\hspace{-0.4in}\|{\rm Ad}\, W \circ \Lambda(f)-[P_1({\rm Ad}\, W\circ \Lambda)(f)P_1\oplus H_2(f)\oplus \psi(f)\oplus \sum_{i=1}^m f(x_i)p_i]\|<\ep\\
&&\hspace{-1.2in}
\tforal
f\in {\cal F},\,\,\,\,\,\,\hspace{0.4in}\\
&& \mu_{t_{j,x}\circ H_i}(O_r)\ge \Delta(r/3)/2\tand
t(P_2+\sum_{i=1}^mp_i)<\ep_0
\eneq
for all $r\ge \eta_0,$ $x\in X_j,$ where $t_{j,x}$ is the
composition of the point-evaluation at $x$ and the normalized trace on $M_{r'(j)},$ $j=1,2,...,k,$ and for all $t\in T(B),$ and
\beq
\|P_1a-aP_1\|<\ep_{00},\,\, \, (1-P_1)a(1-P_1)\in_{\ep_{00}} B \rforal a\in{\cal H}\cup L( {\cal F})\cup \Lambda({\cal F}),
\eneq
where $1_B=1-P_1,$
Moreover,
\beq\label{MT2-3+1}
[P_1LP_1]|_{\cal P}=[P_1({\rm Ad}\, W\circ \Lambda) P_1]|_{\cal P}.
\eneq
\end{lem}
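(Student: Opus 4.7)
The plan is to decompose both $L$ and $\Lambda$ by cutting out a small corner that absorbs the global $KL$-data, and then to match the remaining ``rigid'' pieces over a common homogeneous subalgebra by a single unitary. First I would apply Lemma~\ref{Dig1} (Dig1) simultaneously to the pair $(L,\Lambda)$, using the hypothesis $[L]|_{\cal P}=[\Lambda]|_{\cal P}$ together with the measure bound \eqref{MT2-1}. Supplying trace tolerance $\ep_0/10$, a very large rank bound $R_0$ (to be fixed in the last step), commutant tolerance $\ep_{00}$ on ${\cal H}\cup L({\cal F})\cup\Lambda({\cal F})$, and a set of generators $g_i$ for the free part of $G\cap\ker\rho_{C(X)}$ (where $G$ is the subgroup generated by ${\cal P}$), I obtain a projection $P_1=1-p$, a subalgebra $B=\bigoplus_{j=1}^N C(X_j,M_{r(j)})\subset A$ with $1_B=p$ and each $r(j)\ge R_0$, and unital $(\dt+\ep/8)$-${\cal G}$-multiplicative maps $\psi_1,\psi_2:C(X)\to B$ with $[\psi_1]|_{\cal P}=[\psi_2]|_{\cal P}$, satisfying $\|L(f)-((1-p)L(f)(1-p)+\psi_1(f))\|<\ep/8$ and likewise for $\Lambda$, with the required commutation $\|P_1a-aP_1\|<\ep_{00}$ and approximate containment $(1-P_1)a(1-P_1)\in_{\ep_{00}}B$, small trace $\tau(P_1)<\ep_0/10$, and the measure lower bound $\mu_{t_{j,x}\circ\psi_i}(O_r)\ge\tfrac12\Delta(r/3)$ for $r\ge\eta_0$ and $x\in X_j$.

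Next I peel off a common point-evaluation piece inside $B$. Choose $\{x_1,\ldots,x_m\}$ an $\eta$-dense subset of $X$ (with $\eta$ provided by Lemma~\ref{pointevaluation} applied to ${\cal F}$ and $\ep/8$). Using Lemma~\ref{102} applied to each summand of $B$, I adjoin a unital absorber so that $[\psi_i]|_{\cal P}$ equals $[h_0]|_{\cal P}$ for some honest unital homomorphism $h_0:C(X)\to B$ with finite-dimensional range summands. Combining this with Lemma~\ref{pointevaluation} on each $C(X_j,M_{r(j)})$, I produce mutually orthogonal projections $p_1,\ldots,p_m\in B$ with $\mathrm{rank}(p_k)\gg K\cdot\mathrm{rank}(P_1)+K\cdot\dim(\psi)$ (possible by $R_0$ large), a finite-dimensional corner $P_2BP_2\subset B$, and a unital homomorphism $\psi:C(X)\to P_2BP_2$ of finite-dimensional range, such that both $\psi_1$ and $\psi_2$, after a unitary adjustment supported in $(P_2+\sum p_k)B(P_2+\sum p_k)$, are $\ep/8$-close to $\psi(f)\oplus\sum_k f(x_k)p_k$.

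Set $P_3=p-P_2-\sum p_k$ and $H_i=\psi_i|_{P_3BP_3}$. These are $\ep/4$-${\cal F}$-multiplicative with $[H_1]|_{\cal P}=[H_2]|_{\cal P}=[h_0|_{P_3BP_3}]|_{\cal P}$, and inherit the measure bound $\mu_{t_{j,x}\circ H_i}(O_r)\ge\tfrac12\Delta(r/3)$ for $r\ge\eta_0$ since $\tau(P_2+\sum p_k)$ has been made negligible on every $t_{j,x}$. On each interval summand of $P_3BP_3$ I apply Theorem~\ref{NMT1} to the pair $H_1,H_2$; on each point summand I apply Theorem~\ref{Uniqq}. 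Both invocations are fed the $KK$-equality, the measure bound, and (after a small further absorption if necessary) the Hausdorffized determinant hypothesis. The resulting summand-wise unitaries assemble to $U\in B$ with $\|H_1(f)-UH_2(f)U^*\|<\ep/2$ on ${\cal F}$, and $W=(1-p)\oplus U\in A$ then yields the claimed approximate decompositions of $L$ and $\mathrm{Ad}\,W\circ\Lambda$. The $KK$-identity $[P_1LP_1]|_{\cal P}=[P_1(\mathrm{Ad}\,W\circ\Lambda)P_1]|_{\cal P}$ follows from $[L]|_{\cal P}=[\Lambda]|_{\cal P}$ by subtracting the matched $H_1\sim H_2$, $\psi$, and point-evaluation contributions, all of which now agree on the nose.

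The main obstacle is parameter coordination. Theorem~\ref{NMT1} requires four nested rounds of $(\eta_i,\sigma_i)$ choices, and each must be supplied to Lemma~\ref{Dig1} so that the post-cut maps $\psi_i$ on each summand of $B$ still satisfy the NMT1 hypotheses simultaneously; the $2(1+\lambda_2)$ radius dilation and $\lambda_1$ density loss in Dig1 must be absorbed by successive applications of Lemma~\ref{keeptrace}. A second delicate point is that NMT1 asks for a Hausdorffized-determinant bound $\mathrm{dist}(\overline{\langle H_1(u)\rangle},\overline{\langle H_2(u)\rangle})<\gamma_2$, which is not guaranteed by the $KK$-equality alone on an infinite-dimensional base; this is handled by absorbing a sufficiently large homomorphism via Lemma~\ref{2Olduniq} so that the residual determinant mismatch is controlled by Lemma~\ref{smalltrace} applied to the generators $g_i$. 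Finally, ensuring $K[P_1\oplus P_2]\le[p_k]$ while keeping $\tau(P_3)>1-\ep_0$ is arranged by taking $R_0$ much larger than $K(1+\dim\psi)$ and $m$ moderate, which Dig1 permits through its freedom in $r(j)$.
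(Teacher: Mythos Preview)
You have misread the role of the unitary $W$ in the conclusion. The statement does \emph{not} assert that $H_1$ and $H_2$ are close after conjugation by $W$; it only asserts that $L$ and ${\rm Ad}\,W\circ\Lambda$ share the \emph{same} pieces $\psi(f)\oplus\sum f(x_i)p_i$, while $H_1$ and $H_2$ are allowed to differ (subject only to $[H_1]|_{\cal P}=[H_2]|_{\cal P}$). In the paper the unitary $W$ is obtained by the much softer step of matching two sets of point-evaluation projections (Lemma~9.6 of \cite{LnApp} applied \emph{before} descending to $B$, then using that $A$ has stable rank one so equivalent projections are unitarily conjugate). The comparison of $H_1$ and $H_2$ by a unitary is deferred to the proof of Theorem~\ref{MT1}, where the extra hypotheses $|\tau\circ L_1(g)-\tau\circ L_2(g)|<\gamma_1$ and ${\rm dist}(\overline{\langle L_1(u)\rangle},\overline{\langle L_2(u)\rangle})<\gamma_2$ are available.

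Your attempted invocation of Theorem~\ref{NMT1} inside the present lemma therefore cannot work: \ref{NMT1} requires both a trace-closeness bound on a finite set ${\cal H}$ and a Hausdorffized-determinant bound on a finite set ${\cal U}$, and neither is among the hypotheses of Lemma~\ref{MT2}. Your proposed remedy (``absorbing a sufficiently large homomorphism via Lemma~\ref{2Olduniq} so that the residual determinant mismatch is controlled by Lemma~\ref{smalltrace}'') does not close the gap: Lemma~\ref{2Olduniq} \emph{consumes} a determinant bound as an input rather than producing one, and Lemma~\ref{smalltrace} only shows that $|\tau([L](g))|$ is small for $g\in\ker\rho_{C(X)}$, which controls neither $|\tau\circ H_1(g)-\tau\circ H_2(g)|$ for general $g\in{\cal H}$ nor ${\rm dist}(\overline{\langle H_1(u)\rangle},\overline{\langle H_2(u)\rangle})$. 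The correct route is the one in the paper: first peel off matched point evaluations in $A$ (Lemma~9.6 of \cite{LnApp}), conjugate $\Lambda$ so the point-evaluation projections agree, then apply Lemma~\ref{Dig1} to the pair, and inside $B$ use Lemma~\ref{102}, Lemma~\ref{Foldunique} and Lemma~\ref{pointevaluation} to build $\psi$, $H_1$, $H_2$ and the $p_i$ without ever needing \ref{NMT1}.
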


\begin{proof}


Let $\ep, \ep_0,$ $\{x_1, x_2,...,x_m\}\subset X,$ a finite subset ${\cal F}\subset X,$ a finite subset ${\cal P}\subset \underline{K}(C(X)),$ $\Delta,$ $K\ge 1$ and $\eta_0>0$ be as described.
We may assume that $(\ep, {\cal F}, {\cal P})$ is a $KL$-triple  for $C(X)$
 and $0<\ep_0, \ep<1/16.$

Let $\dt_1>0$ (in place of $\dt$), ${\cal G}_1\subset C(X)$ be a finite subset (in place of ${\cal G}),$
${\cal P}_1\subset {\underline{K}}(C(X))$ (in place of ${\cal P}$)
be a finite subset
and $K_1$ be an integer
(in place of $L$)
for $\min\{\ep/16, \ep_0/16\}$ and ${\cal F}$
required by \ref{Foldunique}.

We may also assume, without loss of generality, that
 ${\cal P}\subset {\cal P}_1$ and $(\dt_1, {\cal G}_1, {\cal P}_1)$ forms a $KL$-triple.
We may further assume that, if $L', L'': C(X)\to C$ (for any unital \CA\, $C$) are $\ep$-${\cal G}_1$-multiplicative
\morp s and
$$
\|L'(f)-L''(f)\|<\ep\tforal f\in {\cal G}_1,
$$
then
$$
[L']|_{{\cal P}_1}=[L'']|_{{\cal P}_1}.
$$


Let $G$ be the subgroup generated by ${\cal P}_1$ and let
$s_1,s_2,..., s_{k_0}$ be a set of generators of $G\cap {\rm ker}\rho_{C(X)}.$

Let $\ep_2=\min\{\ep/64, \ep_0/64, \dt_1/2, \dt'/2\}$ and
${\cal G}_2={\cal F}\cup {\cal G}_1\cup {\cal G}'.$


Let $N_1=N(\ep_2,{\cal G}_2, {\cal P}_1)$ be as in \ref{102}
where
$\dt$ is replaced by $\ep_2,$ ${\cal G}$ is replace by ${\cal G}_2$ and
${\cal P}$ is replaced by ${\cal P}_1.$

Let $N_2$ (in place $m$) and $\{y_1,y_2,...,y_{N_2}\}$ (in place of
$\{x_1,x_2,...,x_m\}$) be as in \ref{pointevaluation} for
$\ep_2$ (in place $\ep$) and ${\cal G}_2$ (in place of ${\cal F}$). One may assume that $N_2>m$ and
$y_j=x_j,$ $j=1,2,...,m.$

Choose $\eta'>0$ satisfying the following:
$$
|f(x)-f(x')|<\ep_2/16
$$
for all $f\in {\cal G}_2,$ if ${\rm dist}(x, x')<2\eta'.$ Moreover, we may assume
that
$$
O_{4\eta'}(y_j)\cap O_{4\eta'}(y_i)=\emptyset\,\,\, {\rm if}\,\,\, i\not=j.
$$

Choose $\eta''>0$ such that  $\eta''<\eta_0/4$ and
$$
\Delta(\eta'')<{\Delta(\eta_0/4)\over{256N_2}}.
$$
Choose
$$
\eta=\min\{\eta_1/4, \eta_2/4, \eta'/4, \eta''/4\}.
$$

Let $\dt_2>0$ (in place of $\dt$) and let ${\cal G}_3\subset C(X)$ be a finite subset required by Lemma 9.6 of \cite{LnApp}
for $\ep_2/2$ (in place of $\ep$), ${\cal G}_2$ (in place of ${\cal F}$), $\eta$ and $1/256$ (in place of $r$).

Choose ${\bar K}$ to be an integer which is greater than
the integer $K$  given by this lemma. We may assume that $K>4.$
Choose $d>0$
such that
\beq\label{MT2-10}
16d{\bar K}N_1N_2^2(K_1+1)<\ep_0\Delta(\eta)/2^{11}.
\eneq

It follows from \ref{smalltrace}  that there are $\dt_3>0$ and a finite subset ${\cal G}_4\subset C(X)$ such that, for any unital $\dt_3$-${\cal G}_4$-multiplicative \morp\, $\Psi: C(X)\to C$ (for any unital   \CA\, $C$ with $T(C)\not=\emptyset$),
$$
|\tau([\Psi](s_i))|<d/8\tforal \tau\in T(C).
$$


Let $\dt=\min\{\ep_2/4, \dt_2/4, \dt_3/4, \dt_4\}$ and ${\cal G}=\cup_{i=1}^5{\cal G}_i.$

%

Now suppose that $L, \Lambda: C(X)\to A,$ where $A$ is a unital simple \CA\, with tracial rank at most one,
satisfy the assumptions of the theorem for the above $\dt,$  ${\cal G},$  $\eta$ and $\Delta.$

By Theorem 9.6 of \cite{LnApp} and by the choice of $\eta,$ there exists projections $Q_1, Q_2\in A$
and two sets of mutually orthogonal projections
$\{E_1, E_2,...,E_{N_2}\}$ in $(1-Q_1)A(1-Q_1)$ and $\{E_1', E_2',...,E_{N_2}'\}$ in $(1-Q_2)A(1-Q_2)$ such that
$\sum_{i=1}^{N_2}E_i=1-Q_1,$ $\sum_{i=1}^{N_2}E_i'=(1-Q_2),$
\beq\label{MT2-11}
\|L(g)-[Q_1L(g)Q_1\oplus \sum_{i=1}^{N_2}g(y_i)E_i)]\|<\ep_2/2,\\
\|\Lambda(g)-[Q_2\Lambda(g)Q_2\oplus \sum_{i=1}^{N_2}g(y_i)E_i']\|<\ep_2/2
\eneq
for all $g\in {\cal G}_2,$
\beq\label{MT2-12}
&&\Delta(\eta)>\tau(E_i)\ge (1-1/2^{12})\Delta(\eta)\andeqn\\ &&\Delta(\eta)>\tau(E_i')\ge (1-1/2^{12})\Delta(\eta),\,\,\,i=1,2,...,N_2,
\eneq
for all $\tau\in T(A).$
Since $A$ has tracial rank at most one (see Lemma 9.9 of \cite{LnApp}), there is, for each $i,$
a projection $q_i\le E_i$ such that $[q_i]\le [E_i']$ and
$$
\tau(q_i)\ge (1-1/2^{11})\Delta(\eta)\tforal \tau\in T(A).
$$
Let $q_i'\le E_i'$ such that $[q_i]=[q_i'],$ $i=1,2,...,N_2.$
Let $Q_0=1-\sum_{i=1}^{N_2}q_i$ and $Q_0'=1-\sum_{i=1}^{N_2}q_i'.$
Then, we have that
\beq
\|L(g)-[Q_1L(g)Q_1\oplus \sum_{i=1}^{N_2}g(y_i)(E_i-q_i)+\sum_{i=1}^{N_2}g(y_i)q_i]\|<\ep_2/2\\
\|\Lambda(g)-[Q_2\Lambda(g)Q_2\oplus \sum_{i=1}^{N_2}g(y_i)(E_i'-q_i')+\sum_{i=1}^{N_2}g(y_i)q_i'\|<\ep_2/2
\eneq
for all $g\in {\cal G}_2.$
Since $[q_i]=[q_i'],$ $i=1,2,..., N_2,$ there is a unitary $W_1\in A$ such that
$$
W_1^*q_i'W_1=q_i,\,\,\,i=1,2,...,N_2\andeqn W_1^*Q_0'W_1=Q_0.
$$
Define $L_1: C(X)\to Q_0AQ_0$ by
$L_1(f)=Q_1L(f)Q_1\oplus \sum_{i=1}^{N_2}f(y_i)(E_i-q_i)$ for all $f\in C(X).$
Define $\Lambda_1: C(X)\to Q_0AQ_0$ by
$\Lambda_1(f)=W_1^*(Q_2\Lambda(f)Q_1\oplus \sum_{i=1}^{N_2}f(y_i)(E_i'-q_i'))W_1$ for all
$f\in C(X).$ Then $L_1$ and $\Lambda_1$ are $\ep_2$-${\cal G}_4$-multiplicative and
\beq\label{MT2-13}
\|L(g)-[L_1(g)+\sum_{i=1}^{N_2}g(y_i)q_i]\|<\ep_2/2\andeqn\\
\|{\rm Ad}\, W_1\circ \Lambda(g)-[\Lambda_1(g)+\sum_{i=1}^{N_2}g(y_i)q_i]\|<\ep_2/2
\eneq
for all $g\in {\cal G}_2.$
Note that
\beq\label{MT2-15}
[L_1](s_i)=[L](s_i)\andeqn [\Lambda_1](s_i)=[\Lambda](s_i),\,\,\,\,i=1,2,...,k_0.
\eneq
We compute that
$$
\mu_{\tau\circ Q_1LQ_1}(O_r)\ge \Delta(r)-N_2\Delta(\eta)\ge   255\Delta(r)/256
$$
for all $\tau\in T(A)$ and $r\ge \eta_0/4.$  It follows that
\beq\label{nMT2-15}
\mu_{\tau\circ L_1}(O_r)\ge 255\Delta(r)/256
\eneq
for all $\tau\in T(A)$ and $r\ge \eta_0/4.$
Similarly,
\beq\label{nMT2-16}
\mu_{\tau\circ \Lambda_1}(O_r)\ge 255\Delta(r)/256
\eneq
for all $\tau\in T(A)$ and $r\ge \eta_0/4.$


Let $\theta<
{\ep_0\Delta(\eta/2)\over{8N_1N_2{\bar K}}}.$
Let $\ep_{00}>0$ and let ${\cal H}\subset A$ be a finite subset. Define
$$
{\cal H}_1={\cal H}\cup L({\cal G})\cup L_1({\cal G})\cup \Lambda_1({\cal G})\cup {\rm Ad}\, W_1\circ \Lambda({\cal G})\cup\{P_1, q_1,q_2,...,q_{N_2}\}.
$$
Let $0<\dt_0<\min\{\ep_2/2, \dt/4, \ep_{00}\}$ and put
\beq\label{MT2-14--}
L'(f)=L_1(f)\oplus \sum_{i=1}^{N_2}f(y_i)q_i\andeqn\\
\Lambda'(f)=\Lambda_1(f)\oplus \sum_{i=1}^{N_2}f(y_i)q_i
\eneq
for all $f\in C(X).$
Since $A$ has tracial rank at most one, by \ref{Dig1},
there exists a projection
$Q_3\in A$  and $B=\oplus_{j=1}^{N} B_j$ with $1_B=Q_3,$
where $B_j=M_{r(j)}(C(X_j))$ and $X_j=[0,1],$ or $X_j$ is a point, such that
\beq\label{MT2-14}
\|Q_3 a-aQ_3\|<\dt_0,\,\,\, Q_3aQ_3\in_{\dt_0} B\tforal a\in {\cal H}_1,\\\label{MT2-14+1}
\|L'(f)-[(1-Q_3)L'(f)(1-Q_3)\oplus L_3(f)]\|<\ep_2/2N_2,\\
\|\Lambda'(f)-[(1-Q_3)\Lambda(f)(1-Q_3)\oplus \Lambda_3(f)]\|<\ep_2/2N_2\rforal f\in {\cal G},\\\label{MT2-14+1+1}
 \tau(1-Q_3)<\theta\rforal \tau\in T(A),\\
|T_{j,x}([L_3](s_i))|<(1+1/128)(d/8), \,\,\,
|T_{j,x}([\Lambda_3](s_i)|<(1+1/128)(d/8)\\\label{MT2-14+2}
\mu_{T_{j,x}\circ L_3}(O_r)\ge 3\Delta(r/3)/4,\,\,\,\mu_{T_{j,x}\circ \Lambda_3}(O_r)\ge 3\Delta(r/3)/4
\eneq
for all $r\ge \eta_0,$  and
for all $j$ and $x\in X_j,$ where
 $T_{j,x}$ is the normalized trace of $M_{r(j)}$ at $x\in X_j,$ and
\beq\label{MT2-rank}
r(j)>{2^{15}{\bar K}N_1N_2^2(K_1+1)\over{\ep_0 \Delta(\eta/2)}},\,\,\,j=1,2,...,N.
\eneq
 Moreover,
 \beq\label{MT2-16-1}
 [L_3]|_{{\cal P}_1}=[\Lambda_3]|_{{\cal P}_1}\,\,\,{\rm in}\,\,\, KL(C(X), B).
  \eneq
Therefore
\beq\label{MT2-16}
\|Q_3L'(f)Q_3-L_3(f)\|<\ep_2/2N_2\andeqn
\|Q_3\Lambda'(f)Q_3-\Lambda_3(f)\|<\ep_2/2N_2
\eneq
for all $f\in {\cal G}.$
By \ref{Dig1},  we further obtain mutually orthogonal projections
$e_1,e_2,...,e_{N_2}\in B$ such that
\beq\label{MT2-17}
\|L_3(f)-[EL_3(f)E\oplus \sum_{i=1}^{N_2} f(y_i)e_i]\|<\ep_2/2\\
\|\Lambda_3(f)-[E\Lambda_3(f)E\oplus \sum_{i=1}^{N_2}f(y_i)e_i]\|<\ep_2/2
\eneq
for all $f\in {\cal G},$ where $E=1_B-\sum_{i=1}^{N_2}e_i.$ Moreover we require that
\beq\label{MT2-18}
\Delta(\eta)/2>T_{j,x}(e_i)\ge (1-1/250)\Delta(\eta),\,\,\,\Delta(\eta)/2>\tau(e_i)\ge (1-1/250)\Delta(\eta)
\eneq
for all $x\in X_j,$  where $T_{j,x}$ is the normalized trace evaluated at $x,$ and for all $\tau\in T(A),$ $j=1,2,...,N_2.$

Define $L_4=EL_3E$ and $\Lambda_4=E\Lambda_3E.$
We compute, by (\ref{MT2-14+2}) and (\ref{MT2-18})
\beq\label{MT2-18+}
\mu_{T_{j,x}\circ L_4}(O_r) &\ge& 3\Delta(r/3)/4-N_2\Delta(\eta)\ge \Delta(r/3)/2\andeqn \\\label{MT2-18+2}
\mu_{T_{j, x}\circ \Lambda_4}(O_r)&\ge & \Delta(r/3)/2.
\eneq
for all $r\ge \eta_0$
and for all $x\in X_j$ and $j=1,2,...,N.$

We compute that
\beq\label{NMT-02}
[L_4](s_j)=[\Lambda_4](s_j)=[L](s_j)=[\Lambda](s_j),\,\,\,j=1,2,...,k_0.
\eneq
Put $C'_j=E(M_{r(j)}(C(X_j))E=M_{r''(j)}(C(X_j)),$ where $r''(j)\le r(j),$
put $L_{4,j}=\pi_j\circ L_4$ and $\Lambda_{4,j}=\pi_j\circ \Lambda_4,$ where
$\pi_j: EBE\to EB_jE$ is the projection, $j=1,2,...,N.$

Let
$$
d_j=\max\{|T_{j,x}([L_4](s_k))|: k=1,2,...,k_0\},
$$
$j=1,2,...,N.$
Note that $d_j\le d/4,$ $j=1,2,...,N.$

It follows from \ref{102} that
there exist unital $\ep_2$-${\cal G}_2$-multiplicative \morp s
$L_{0,j}, {\bar L_{0,j}}: C(X)\to M_{J_j}(C(X_j))$ whose ranges are contained in  finite dimensional \SCA s, where
\beq\label{MT2-23}
J_j=d_jN_1r(j)\le dN_1 r(j)
\eneq
such that
\beq\label{nMT-01}
[L_{4,j}\oplus L_{0,j}]|_{{\cal P}_1}=[H_{0,j}]|_{{\cal P}_1}\andeqn [L_{0,j}\oplus {\bar L}_{0,j}]|_{{\cal P}_1}=[h_{0,j}]|_{{\cal P}_1}
\eneq
for some unital \hm s $H_{0,j}: C(X)\to M_{r''(j)+J_j}(C(X_j))$ and $h_{0,j}: C(X)\to M_{2J_j}(C(X_j))$
with finite dimensional range.

By applying \ref{Foldunique}, we obtain a unital \hm\, $h_{1,j}: C(X)\to M_{2J_jK_1}(C(X_j))$
with finite dimensional range and
a unital \hm\, $H_{1,j}: C(X)\to M_{2J_j(K_1+1)}(C(X_j))$ with finite dimensional range
such that
\beq\label{nMT-03}
\|(L_{0,j}\oplus {\bar L}_{0,j}\oplus h_{1,j})(f)-H_{1,j}(f)\|<\min\{\ep/16, \ep_0/16\}\tforal g\in {\cal F}.
\eneq
It follows from \ref{pointevaluation} that there are mutually orthogonal rank $2J_j(K_1+1)$
projections $q_{i,j}'\in M_{N_22J_1(K_1+1)}(C(X_j))$  and a unital \hm\, $h_{2, j}: C(X)\to
M_{(N_2-1)(2J_1(K_1+1)}(C_j)$ with finite dimensional range such that
\beq\label{nMT-04-}
\|H_{1,j}(f)\oplus h_{2,j}(f)-\sum_{i=1}^{N_2} f(y_i)q_{i,j}'\|<\ep_2\tforal f\in {\cal F},
\eneq
$j=1,2,...,N.$

There is, for each $i$ and $j,$ by (\ref{MT2-18}) and (\ref{MT2-10}), a projection $p_{i,j}'\le e_i$ such that
\beq\label{MT2-19}
\ep_0/128N_2\ge  T_{j,x}(p_{i,j}')\ge \ep_0/256N_2
\ge 16d{\bar K}N_1N_2(K_1+1)
\eneq
for $x\in X_j,$ $j=1,2,...,N$ and $i=1,2,...,N_2.$

Put $L_0=\sum_{j=1}^NL_{0,j},$ ${\bar L}_0=\sum_{j=1}^N{\bar L}_{0,j},$
$h_i=\sum_{j=1}^N h_{i,j},$ $i=1,2.$
There is a projection $q_{i,j}\le p_{i,j}'$ in $B_j$ such that
$[q_{i,j}]=[q_{i,j}']$ in $K_0(B_j),$ $j=1,2,...,N.$
Put $p_i''=\sum_{j=1}^N q_{i,j},$ $i=1,2,...N_2.$ Then
\beq\label{MT2-19+}
T_{j,x}(p_i'')<2J_j(K_1+1)/r(j)\tforal x\in X_j.
\eneq

Thus we obtain a unitary $W_0\in B$ such that
\beq\label{nMT-04}
\|{\rm Ad}\, W_0\circ (L_0\oplus {\bar L}_0\oplus  h_1\oplus h_2)(f)-\sum_{i=1}^{N_2} f(y_i)p_i''\|<\ep_2+\ep/16
\eneq
for all $f\in {\cal F}.$

Now define
$$H_1(f)=L_4(f)\oplus {\rm Ad}\, W_0\circ L_0\oplus   h_1(f)\oplus h_2(f)\oplus \sum_{k=1}^{m}f(y_k)(e_i-p_i')\oplus \sum_{j=m+1}^{N_2}f(y_j)(e_i-p_i'')$$
for all $f\in C(X),$
$\psi={\rm Ad}\, W_0\circ {\bar L}_0.$
Let $P_1=(1-Q_3),$ $P_2=W_0^*{\bar L}_0(1_{C(X)})W_0,$
$P_3=H_1(1_{C(X)}).$ $p_j=p_j'-p_j'',$ $j=1,2,...,N_2.$
Then we estimate that, by (\ref{MT2-17}) and (\ref{nMT-04}),
\beq
&&\hspace{-0.6in}\|L_3(f)-(H_1\oplus \psi(f)\oplus\sum_{j=1}^mf(x_j)p_j)\|\\
&<&\|L_3(f)-(L_4(f)\oplus \sum_{i=1}^{N_2}f(y_j)e_j)\| +\\
&&\hspace{-0.6in}\|L_4(f)\oplus \sum_{j=1}^{N_2}f(y_j)e_j
-L_4(f)\oplus {\rm Ad}\, W_0(L_0\oplus {\bar L}_0\oplus h_1\oplus h_2)(f)\oplus
\sum_{j=1}^{m}f(y_j)p_j)\|\\\label{nMT2-30}
&<& \ep_2/2+\ep_2+\ep/16=3\ep_2/2+\ep/16
\eneq
for all $f\in {\cal F}.$
It follows from (\ref{MT2-13}), (\ref{MT2-14--}), (\ref{MT2-14+1}) and (\ref{nMT2-30}) that
\beq
&&\|L(f)-[P_1L(f)P_1\oplus\psi(f)\oplus \sum_{j=1}^mf(x_j)p_j\oplus H_1(f)\|\\
&<&\|L(f)-L'(f)\|+\|L'(f)-(1-Q_3)L'(f)(1-Q_3)\oplus L_3(f)\|\\
&&+
\|(1-Q_3)L'(f)(1-Q_3)\oplus L_3(f)-P_1L(f)P_1\oplus L_3(f)\|\\
&&+\|
P_1L(f)P_1\oplus L_3(f)-[P_1L(f)P_1\oplus H_1(f)\oplus \psi(f)\oplus \sum_{j=1}^m f(x_j)p_j] \|\\
&<& \ep_2/2+\ep_2/2+3\ep_2/2+\ep/16<\ep
\eneq
for all $f\in {\cal F}.$
Define
$$
H_2(f)=\Lambda_4\oplus {\rm Ad}\, W_0\circ L_0\oplus  h_1(f)\oplus h_2(f)\oplus \sum_{k=1}^{N_2}f(y_k)(e_i-p_i')\oplus \sum_{j=m+1}^{N_2}f(y_j)(e_j-p_j'').
$$
Similarly, we also have
\beq
\|{\rm Ad}\, W_1\circ \Lambda(f)-
[P_1({\rm Ad}\, W_1\circ \Lambda(f))P_1\oplus \psi(f)\oplus \sum_{j=1}^m f(x_j)p_j\oplus H_2(f)]\|<\ep
\eneq
for all $f\in {\cal F}.$

Note also that, (by (\ref{MT2-10}) and (\ref{MT2-19+}))
\beq\label{MT2-20}
{\ep_0\Delta(\eta)\over{128N_2}}\ge T_{j,x}(p_i)&=&T_{j,x}(p_j'-p_j'')\ge {\ep_0\Delta(\eta)\over{256N_2}}-2J_j(K_1+1)/r(j)\\
&\ge & 14{\bar K}dN_1N_2(K_1+1)
\eneq
for all $\tau\in T(A)$ and $i=1,2,...,N_2.$
Therefore, by (\ref{MT2-14+1+1}),
\beq\label{MT2-21+}
\tau(p_i)\ge (1-\theta)14d{\bar K}N_1N_2(K_1+1)\tforal \tau\in T(A).
\eneq
Note that, by (\ref{MT2-23}),
\beq\label{NMT2-21}
T_{j,x}(P_2)\le d_jN_1\andeqn \tau(P_1)=\tau(1-Q_3)<\theta
\eneq
for all $x\in X_j,$ $j=1,2,...,N$ and for all $\tau\in T(A).$ 
It follows that
\beq\label{NMT2-22}
\tau(p_i)>K\tau(P_2)+K\tau(P_1) \,\tforal \tau\in T(A),\,\,\,i=1,2,...,m.
\eneq
This gives (\ref{MT2-2+}).
To obtain (\ref{MT2-2}), we note, by (\ref{MT2-19}) and (\ref{MT2-10}) that
\beq\label{NMT2-23}
\tau(P_1)+\tau(P_2)+\sum_{i=1}^m \tau(p_i)&<&
\theta+dN_1+{\ep_0\over{128N_2}}m\\
&<& {\ep_0\Delta(\eta/2)\over{4N_1N_2 {\tilde{K}}}}+ {\ep_0\Delta(\eta/2)\over{256{\tilde K}}}+{\ep_0\over{128}}<\ep_0/2
\eneq
for all $\tau\in T(A).$
We also have that
\beq\label{NMT2-24}
T_{j,x}(P_2)+\sum_{i=1}^mT_{j,x}(p_i)<\ep_0/2 \tforal x\in X_j.
\eneq
Thus
$$
t(P_2+\sum_{i=1}^mp_i)<\ep_0/2
$$
for all $t\in T(B).$ This implies (\ref{MT2-2}).
We write $C=P_3BP_3=\oplus_{j=1}^N C_j,$ where $C_j=M_{r'(j)}(C(X_j)),$ $j=1,2,...,N.$
Finally, from (\ref{MT2-18+}) and (\ref{MT2-18+2}),
\beq\label{NMT2-25}
\mu_{t_{j,x}\circ H_i}(O_r)\ge \Delta(r/3)/2
\eneq
for all $r\ge \eta_0$ and $x\in X_j,$ where $t_{j,x}$ is the normalized trace of $M_{r'(j)}$ evaluated at $x\in X_j,$ $j=1,2,...,N$ and $i=1,2.$
The lemma follows.

\end{proof}

\begin{lem}\label{length}
Let $C$ be a separable unital \CA\, with $T(C)\not=\emptyset,$  let ${\cal U}\subset U_c(K_1(C))$ be a finite
subset, ${\cal F}\subset C$ be a finite subset and let $\lambda>0.$
There exists $\dt>0$ and a finite subset ${\cal G}\subset C$ satisfying the following:
Suppose that $L_1, L_2: C\to A$ (for some unital \CA\ $A$) are two  $\dt$-${\cal G}$-multiplicative \morp s
such that
\beq\label{length-1}
{\rm dist}(\overline{\langle L_1(u)\rangle}, \overline{\langle L_2(u)\rangle})\le \Gamma
\eneq
for all $u\in {\cal U}$ and for some $\Gamma>0.$
There exists a finite subset ${\cal H}\subset A$ and $\sigma>0$ such that,
if $p\in A$ is a projection such that
$$
\|pa-ap\|<\sigma, pap\in_\sigma B \tforal a\in {\cal H},
$$
where $1_B=p$ and $B\subset pAp$ is a unital \SCA,
 and $\Lambda_1, \Lambda_2: C\to B$ are two $2\dt$-${\cal G}$-multiplicative \morp s
 such that
 $$
 \|pL_i(g)p-\Lambda_i(g)\|<\sigma\tforal g\in {\cal G},
 $$
then
$$
{\rm dist}(\overline{\langle \Lambda_1(u)\rangle}, \overline{\langle \Lambda_2(u)\rangle})\le \Gamma+\lambda\,\,\,{\rm (}\,in\,\,\, B{\rm)}
$$
for all $u\in {\cal U}.$

Moreover,
\beq\label{length-2}
|\tau\circ \Lambda_1(f)-\tau\circ \Lambda_2(f)|\le \lambda+\max\{|t\circ L_1(f)-\tau\circ L_2(f)|: f\in {\cal F},\,\,
t\in T(A)\}
\eneq
for all $f\in {\cal F}$ and $\tau\in T(B).$
\end{lem}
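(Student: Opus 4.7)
The central idea is the continuity of the de la Harpe--Skandalis determinant and of tracial values under cut-downs by approximately central projections. I will first choose $\dt$ small and ${\cal G}$ large so that, for each $u\in{\cal U}$ (viewed as a unitary in some $M_k(C)$), the normalizations $\langle L_i(u)\rangle\in U(M_k(A))$ are well defined and $\langle L_1(u)\rangle\langle L_2(u)\rangle^*\in U_0(M_k(A))$, as encoded in the hypothesis ${\rm dist}(\overline{\langle L_1(u)\rangle},\overline{\langle L_2(u)\rangle})\le\Gamma$. I then fix a piecewise-smooth path $\gamma_u:[0,1]\to U_0(M_k(A))$ from $1$ to $\langle L_1(u)\rangle\langle L_2(u)\rangle^*$ whose determinant $\overline{\Delta}(\gamma_u)$ has norm at most $\Gamma+\lambda/4$ in $\Aff(T(A))/\overline{\rho_A(K_0(A))}$.

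Enlarging ${\cal H}$ to include the matrix entries of $L_i(u)$ for $u\in{\cal U}$, of $L_i(f)$ for $f\in{\cal F}$, and of $\dot\gamma_u(t)\gamma_u(t)^*$ at a fine mesh in $t$, I take $\sigma$ small enough that, under the hypotheses $\|pa-ap\|<\sigma$ and $pap\in_\sigma B$ for $a\in{\cal H}$, the cut-downs $p\gamma_u(t)p$ approximate a piecewise-smooth path in $U(M_k(pAp))$. After the standard unitary polar correction and the transfer via $pap\in_\sigma B$, one obtains a piecewise-smooth path $\eta_u:[0,1]\to U_0(M_k(B))$ from $1_B$ to $\langle\Lambda_1(u)\rangle\langle\Lambda_2(u)\rangle^*$; the endpoint is close to $p\,\langle L_1(u)\rangle\langle L_2(u)\rangle^*\,p$ by $\|pL_i(g)p-\Lambda_i(g)\|<\sigma$. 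A direct tracial computation, using $\tau_B(\dot\eta_u(t)\eta_u(t)^*)\approx\tau_B(p\,\dot\gamma_u(t)\gamma_u(t)^*\,p)$ together with a Hahn--Banach extension of $\tau_B$ that is approximately tracial on the relevant finite subset of $A$, then bounds $\|\overline{\Delta}(\eta_u)\|$ by $\Gamma+\lambda$, yielding the first conclusion.

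For the moreover estimate, any $\tau\in T(B)$ extends by Hahn--Banach to a state $\tilde\tau$ on $A$; the almost commutation $\|[p,L_i(f)]\|<\sigma$ with $L_i(f)\in{\cal H}$ gives $\tilde\tau(L_i(f))\approx\tilde\tau(pL_i(f)p)\approx\tau(\Lambda_i(f))$ up to $O(\sigma)$. A standard Dixmier-type averaging upgrades $\tilde\tau$ to a genuine $t\in T(A)$ agreeing with it to within $\lambda/2$ on $L_1({\cal F})\cup L_2({\cal F})$, and the triangle inequality delivers the advertised bound. The principal technical obstacle lies in the first part: producing $\eta_u$ from the cut-down of $\gamma_u$ and comparing the two Harpe--Skandalis integrals sharply enough that all cut-down, approximation, and commutation errors are absorbed into the slack $\lambda$; the trace-extension portion is the standard companion.
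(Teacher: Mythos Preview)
Your proof sketch has a genuine gap in the step where you pass from a state on $A$ to a trace on $A$. In both parts you extend $\tau\in T(B)$ to a state $\tilde\tau$ on $A$ via Hahn--Banach and then assert that a ``Dixmier-type averaging'' produces a genuine $t\in T(A)$ agreeing with $\tilde\tau$ to within $\lambda/2$ on the finitely many elements $L_i(f)$ (or on $\dot\gamma_u\gamma_u^*$). This is false in general: averaging a state over unitaries does push it toward the tracial state space, but the limiting trace need not be close to the original state on any prescribed finite set. For instance, if $A$ has a unique trace $\tau_A$, the only candidate is $t=\tau_A$, and there is no reason $\tau_A(L_i(f))$ should approximate $\tilde\tau(L_i(f))$. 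Without this step your integral comparison for $\bar\Delta(\eta_u)$ and your trace estimate both collapse, because the bounds $\|\bar\Delta(\gamma_u)\|\le\Gamma+\lambda/4$ and $|t(L_1(f))-t(L_2(f))|\le M$ are only available for genuine traces $t\in T(A)$, not for arbitrary states.

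The paper avoids this entirely by working on the element side rather than the functional side. For the $CU$ part, the hypothesis ${\rm dist}(\overline{\langle L_1(u)\rangle},\overline{\langle L_2(u)\rangle})\le\Gamma$ is witnessed by an element $w\in CU(A)$ with $\|\langle L_1(u)\rangle\langle L_2(u)\rangle^*-w\|\le\Gamma+\lambda/2$; since $w$ is (close to) a finite product of commutators, one simply puts those commutator entries into ${\cal H}$ and observes that compressing a commutator by an almost-central $p$ and perturbing into $B$ yields a commutator in $B$, hence an element of $CU(B)$. For the trace part, the paper invokes Cuntz--Pedersen: the uniform bound $|t(L_1(f))-t(L_2(f))|\le M$ for all $t\in T(A)$ is equivalent to an approximate relation $L_1(f)\approx\sum x_i^*x_i$, $L_2(f)\approx\sum x_ix_i^*$ up to $M$; compressing the $x_i$ by $p$ and perturbing into $B$ gives the same relation for $\Lambda_1,\Lambda_2$, and now any $\tau\in T(B)$ kills $\sum y_i^*y_i-\sum y_iy_i^*$ directly. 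In both halves the key is that the relevant hypothesis is encoded by finitely many elements of $A$ satisfying algebraic identities preserved under compression, so no map $T(B)\to T(A)$ is ever needed.
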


\begin{proof}
Let ${\cal U}$ and ${\cal F}$ be fixed.
Then there is an integer $k\ge 1$ such that every element in ${\cal U}$ is represented by a unitary in $M_k(C).$
To simplify notation, replacing $A$ by $M_k(A),$ replacing $B$ by $M_k(B),$ and later replacing $L_i$  by
$L_i\otimes {\rm id}_{M_k},$ $i=1,2,$ without loss of generality, we may assume that ${\cal U}$ is actually in $U(A).$
We choose $\dt$ and ${\cal G}$ such that
for any $2\dt$-${\cal G}$-multiplicative \morp s $L$ from $C,$
$
\langle L(u) \rangle
$
is well defined for all $u\in {\cal U}.$

Now let  $L_1$ and $L_2$ be as described (for the above choice of $\dt$ and ${\cal G}$).
Suppose that $ {\cal U}=\{u_1, u_2,...,u_m\}.$ Then there are $w_1, w_2,...,w_m\in CU(A)$ such that
$$
\|\langle  L_1(u_i)\rangle \langle L_2(u_i^*)\rangle-w_i\|\le \Gamma+\lambda/2.
$$
It is clear that, if ${\cal H}$ is sufficiently large (containing at least
$L_1(u)$ and $L_2(u)$ for all $u\in {\cal U}$ and many  other elements in $U(A)$)
and $\sigma$ is sufficiently small, $\langle pL_ip(u_j)\rangle $ are well defined and
$$
\|\langle pL_ip(u_j)\rangle-\langle  \Lambda_i(u_j)\rangle \|<\lambda/16
$$
($j=1,2,...,m$ and $i=1,2$)
and there are unitaries $v_1, v_2,...,v_m\in CU(B)$ such that
$$
\|pw_ip-v_i\|<\lambda/16,\,\,\,i=1,2,...,m.
$$
It follows that
\beq
{\rm dist}(\overline{\langle \Lambda_1(u)\rangle}, \overline{\langle \Lambda_2(u)\rangle})\le \Gamma+\lambda
\eneq
for all $u\in {\cal U}.$

Similarly, for each $f\in {\cal F},$ there are $x_1(f), x_2(f),..., x_{f(m)}(f)\in A$ such that
\beq
&&\|L_1(f)-\sum_{i=1}^{f(m)} x_i(f)^*x_i(f)\|<\lambda/8\andeqn\\
&&\|L_2(f)-\sum_{i=1}^{f(m)} x_i(f)x_i(f)^*\|<M+\lambda/8
\eneq
where
$M=\max\{|\tau\circ L_1(f)-\tau\circ L_2(f)|: f\in {\cal F},\,\,\tau\in T(A)\}$ (see \cite{CP}).
We compute that, with sufficiently large ${\cal H}$ and small $\sigma,$ there are
$y_1(f), y_2(f),...,y_{f(m)}(f)\in B$ such that
\beq
&&\|\Lambda_1(f)-\sum_{i=1}^{f(m)} y_i(f)^*y_i(f)\|<\lambda/4\andeqn\\
&&\|\Lambda_1(f)-\sum_{i=1}^{f(m)} y_i(f)^*y_i(f)\|<M+\lambda/4
\eneq
for all $f\in {\cal F}.$
This implies that
$$
|\tau\circ \Lambda_1(f)-\tau\circ \Lambda_2(f)|<M+\lambda
$$
for all $f\in {\cal F}$ and for all $\tau\in T(B).$

\end{proof}

\begin{thm}\label{MT1}
Let $X$ be a compact metric space and let $\Delta: (0,1)\to (0,1)$ be a non-decreasing  function with $\lim_{t\to 0}\Delta(t)=0.$
Let $\ep>0$ and ${\cal F}\subset C(X)$ be a finite subset.
Then there exists $\eta>0,$ $ \dt>0,$ a finite subset ${\cal G}\subset C(X),$
a finite subset ${\cal H}\subset C(X)_{s.a.},$  a finite subset
 ${\cal P}\subset \underline{K}(C(X)),$ a finite
subset ${\cal U}\subset U_c(K_1(C(X))),$ $\gamma_1>0$ and $\gamma_2>0$ satisfying the following:
Suppose that $L_1, L_2: C(X)\to A$  are two unital
$\dt$-${\cal G}$-multiplicative \morp s for some unital simple
\CA\, $A$ of tracial rank at most one such that
\beq\label{MT1-1}
[L_1]|_{\cal P}&=&[L_2]|_{\cal P},\\
|\tau\circ L_1(h)-\tau\circ L_2(h)| &<&\gamma_1\tforal h\in {\cal H},\\
{\rm dist}(\langle \overline{L_1(u)}\rangle, \langle \overline{L_2(u)}\rangle)&<&\gamma_2\tforal u\in {\cal U}\\
\mu_{\tau\circ L_i}(O_r) &>&\Delta(r)
\eneq
for all $\tau\in T(A)$ and for all $r\ge \eta.$
Then there exists a unitary $W\in A$ such that
\beq\label{MT1-2}
\|{\rm Ad}\, W\circ L_1(f)-L_2(f)\| &<& \ep\tforal f\in {\cal F}.
\eneq

\end{thm}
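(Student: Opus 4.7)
The plan is to reduce \ref{MT1} to the combination of three already-established results: the decomposition \ref{MT2}, the interval-algebra uniqueness \ref{NMT1}, and the uniqueness up to point-evaluation absorption \ref{2Olduniq}. The strategy is to choose the parameters in \ref{MT1} so that they are strong enough to guarantee the hypotheses of each of these three results simultaneously.

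First I would fix, in advance, the constants $L$ and the finite subsets $\{\xi_1,\dots,\xi_m\}\subset X$ produced by \ref{2Olduniq} for $\epsilon/3$, $\mathcal{F}$, and a suitable map $\lambda$, together with the inputs required by \ref{NMT1} for $\epsilon/3$, $\mathcal{F}$ and the measure-distribution function $r\mapsto \Delta(r/3)/2$. Then I would choose the parameters $\eta_0$, $\epsilon_0$, $K$, ${\cal P}$, $\{x_1,\dots,x_m\}$ fed into \ref{MT2} so that (a) $K\ge L$, (b) $\{x_i\}$ contains the $\{\xi_i\}$ from \ref{2Olduniq}, (c) $\eta_0$ is the $\eta_4$-scale parameter required by \ref{NMT1}, and (d) ${\cal P}$ contains the finite $\underline{K}$-subset required by both \ref{NMT1} and \ref{2Olduniq}. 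Finally I would apply \ref{MT2} (with $L=L_1$, $\Lambda=L_2$) to obtain a unitary $W\in A$, projections $P_1,P_2,P_3,p_1,\dots,p_m$ summing to $1_A$, a $C^*$-subalgebra $B=\oplus_{j=1}^N C(X_j,M_{r(j)})\subset (1-P_1)A(1-P_1)$, an $\epsilon/3$-${\cal F}$-multiplicative map $\psi$ with finite-dimensional range, and $\epsilon/3$-${\cal F}$-multiplicative maps $H_1,H_2:C(X)\to P_3BP_3$ with $[H_1]|_{\cal P}=[H_2]|_{\cal P}=[h_0]|_{\cal P}$ for some honest homomorphism $h_0$, together with the estimates
$$L_1(f)\approx_{\epsilon/3} P_1L_1P_1\oplus H_1(f)\oplus\psi(f)\oplus\sum_i f(x_i)p_i,$$
$$\mathrm{Ad}\,W\circ L_2(f)\approx_{\epsilon/3} P_1(\mathrm{Ad}\,W\circ L_2)P_1\oplus H_2(f)\oplus\psi(f)\oplus\sum_i f(x_i)p_i,$$
and with $K[P_1\oplus P_2]\le [p_i]$, $[P_1L_1P_1]|_{\cal P}=[P_1(\mathrm{Ad}\,W\circ L_2)P_1]|_{\cal P}$, and the measure bound $\mu_{t_{j,x}\circ H_i}(O_r)\ge \Delta(r/3)/2$ for $r\ge \eta_0$.

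Next I would handle the two uniqueness subproblems. On the big piece $P_3BP_3$, I would work summand by summand: on each $B_j=C(X_j,M_{r(j)})$ with $X_j=[0,1]$ apply \ref{NMT1}, and on each $B_j$ with $X_j$ a point apply \ref{Uniqq}. To verify the hypotheses of \ref{NMT1} on the $j$-th summand, the measure bound is given by \ref{MT2}, the $KL$-equality $[H_{1,j}]|_{\cal P}=[H_{2,j}]|_{\cal P}=[h_{0,j}]|_{\cal P}$ is given by \ref{MT2}, and the trace-difference and determinant-distance bounds are obtained from the original hypotheses of \ref{MT1} by applying \ref{length} to the cutdown by $1-P_1$ followed by the projection onto $B_j$ (choosing $\gamma_1,\gamma_2$ in \ref{MT1} sufficiently small so that after this cutdown and projection they fall below the thresholds $\gamma_1,\gamma_2$ required by \ref{NMT1}). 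This yields a unitary $V\in P_3BP_3$ such that $\|VH_1V^*-H_2\|_{\cal F}<\epsilon/3$. On the small piece, $P_1L_1P_1$ and $P_1(\mathrm{Ad}\,W\circ L_2)P_1$ are unital almost-multiplicative maps into the simple $C^*$-algebra $P_1AP_1$ of tracial rank at most one, with the same ${\cal P}$-class and (by \ref{length}) with almost-equal $\mathcal{U}$-determinant distances; since $K[P_1]\le[p_i]$ and $K$ was chosen $\ge L$, \ref{2Olduniq} applies with reservoir $\sum_i f(x_i)p_i$ (this is why $\{\xi_i\}\subset \{x_i\}$ was arranged, so the absorbing point-evaluation is exactly the one already present), giving a unitary $U$ such that $P_1L_1P_1\oplus\sum_i f(x_i)p_i$ and $P_1(\mathrm{Ad}\,W\circ L_2)P_1\oplus\sum_i f(x_i)p_i$ are within $\epsilon/3$ on ${\cal F}$ after conjugation by $U$.

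Finally I would assemble the unitaries: set $W':=W\cdot (U\oplus V\oplus 1_{\psi\text{-piece}})$ in the decomposition above, and check by the triangle inequality that $\|\mathrm{Ad}\,W'\circ L_1(f)-L_2(f)\|<\epsilon$ for all $f\in{\cal F}$. The $\psi$-pieces and point-evaluation reservoir pieces cancel exactly between the two sides of the decomposition, the $H_i$-pieces are reconciled by $V$, and the $P_1L_iP_1$-pieces together with the point-evaluation are reconciled by $U$.

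The main obstacle will be the parameter bookkeeping. \ref{MT1} has nine output parameters $\eta,\delta,{\cal G},{\cal H},{\cal P},{\cal U},\gamma_1,\gamma_2$ (besides $\epsilon,{\cal F},\Delta$), and each of \ref{MT2}, \ref{NMT1}, \ref{2Olduniq}, \ref{length} introduces its own collection of inputs and outputs with several nested quantifiers (\ref{NMT1} alone has four nested $\eta_i,\sigma_i$). The delicate point is that the measure bounds supplied by \ref{MT2} are of the form $\Delta(r/3)/2$ rather than $\Delta(r)$, so one must invoke \ref{NMT1} with the shifted function; and that \ref{length} must be applied so that the cutdown trace and determinant-distance estimates supplied to both \ref{NMT1} and \ref{2Olduniq} remain under their respective thresholds. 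Once the order of quantifiers is correctly arranged (choose the parameters for \ref{NMT1} and \ref{2Olduniq} first, then derive the \ref{MT2} inputs, then choose ${\cal G},\mathcal{H},\mathcal{U},\gamma_i,\delta,\eta$ from these), the verification is essentially routine.
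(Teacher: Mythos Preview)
Your proposal is correct and follows essentially the same three-step architecture as the paper's proof: invoke \ref{MT2} to decompose, apply \ref{NMT1} to the interval-algebra piece $P_3BP_3$, and apply \ref{2Olduniq} to absorb the small corner into the point-evaluation reservoir. The only cosmetic differences are that the paper bundles $\psi$ together with $P_1L_iP_1$ when invoking \ref{2Olduniq} (so the target corner is $(P_1+P_2)A(P_1+P_2)$ rather than $P_1AP_1$, using that $\langle\psi(u)\rangle\in CU$), and the paper is explicit about the passage from the $\Phi_i$-bounds on $B$ obtained via \ref{length} to the $H_i$-bounds on $C=P_3BP_3$ needed for \ref{NMT1}, which requires the extra input $t(P_2+\sum p_i)<\epsilon_0$ from \ref{MT2}; you should make that last step explicit, but otherwise your outline matches.
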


\begin{proof}
Fix $\ep>0$ and a finite subset ${\cal F}\subset C(X).$
Let $\eta_1>0$ be as in \ref{NMT1} for $\ep/2$ (in place of $\ep$)
and ${\cal F}.$ Let $\sigma_1=\Delta(\eta_1/3)/3.$
Let $\eta_2>0$ be as in \ref{NMT1} for $\ep/2$ (in place of $\ep$),
${\cal F},$ $\eta_1$ and $\sigma_1.$
Let $\sigma_2=\Delta(\eta_2/3)/3.$ Let $\eta_3>0$ be as in \ref{NMT1} for $\ep/2$ (in place of $\ep$),
${\cal F},$ $\eta_1,$ $\sigma_1,$ $\eta_2$ and $\sigma_2.$
Let $\sigma_3=\Delta(\eta_3/3)/3.$ Let $\eta_4>0$ be as in \ref{NMT1} for $\ep/2$ (in place of $\ep$),
${\cal F},$ $\eta_1,$ $\sigma_1,$ $\eta_2,$ $\sigma_2,$ $\eta_3$ and
$\sigma_3.$ Let $\sigma_4=\Delta(\eta_4/3)/3.$

Let $\gamma_1'>0$ ( in place of $\gamma_1$), $\gamma_2'>0$ (in place of $\gamma_2$), $\dt_1$ (in place of $\dt$), ${\cal G}_1\subset C(X)$
(in place of ${\cal G}$) be a finite subset, ${\cal P}_1\subset \underline{K}(C(X))$ (in place of ${\cal P}$) be a finite subset, ${\cal H}\subset C(X)_{s.a.}$  be a finite subset and ${\cal U}_1\subset U_c(K_1(C(X)))$ (in place of ${\cal U}$) be a finite subset as required by \ref{NMT1} for
$\ep/2,$ ${\cal F},$ $\eta_i$ and $\sigma_i$ ($i=1,2,3,4$).
Let $N\ge 1$ be an integer such that every unitary in ${\cal U}_1$ is in $M_N(U(C(X))).$

Let $\Delta_1=\Delta/2.$
Let $\dt_2>0$ (in place of $\dt$) and ${\cal G}_2\subset C(X)$ (in place of ${\cal G}$) be  required by \ref{keeptrace} for $\Delta_1$ (in place of $\Delta$), ${\cal U}_1$ (in place of ${\cal U}$), $\eta_4/2$ (in place $\eta$), 15/16 (in place of $\lambda_1$)
and $1/32$ (in place of $\lambda_2$).

Let $\dt_3>0$ (in place of $\dt$), ${\cal G}_3\subset C(X)$ (in place of ${\cal G}$) be a finite subset, ${\cal P}_2\subset \underline{K}(C(X))$
(in place of ${\cal P}$) be a finite subset
$\{x_1, x_2,..., x_m\}\subset X,$
${\cal U}_2\subset U_c(K_1(C(X))$ (in place of ${\cal U}$)  and
$K\ge 1$ (in place of $L$) be an integer required by \ref{2Olduniq} for $\ep/2$ (in place of $\ep$),
${\cal F}$ and $\gamma_2'$ (in place of $\lambda$).

Let $\dt_4>0$ (in place of $\dt$), ${\cal G}_4\subset C(X)$ (in place of ${\cal G}$) be a finite subset
 required by Lemma \ref{length} for
$\gamma_2'/8$ (in place of $\lambda$), ${\cal U}_1\cup {\cal U}_2$ (in place of ${\cal U}$) and ${\cal H}$ (in place of ${\cal F}$).

Let $\dt_5=\min\{\ep/4, \dt_i: 1\le i\le 4\}$
${\cal G}_5={\cal F}\cup \cup_{i=1}^4{\cal G}_i,$  ${\cal U}={\cal U}_1\cup {\cal U}_2,$
$\gamma_1=\gamma_1'/8$ and $\gamma_2=\gamma_2'/8.$
Put $\ep_0=\min\{\gamma_1'/8N, \gamma_2'/8N\}$ and $\eta_0=\min\{\eta_i/4: 1\le i\le 4\}.$

Let  $\eta>0,$ $\dt_6>0$ (in place of $\dt$), ${\cal G}_6\subset C(X)$ (in place of ${\cal G}$) be a finite subset
required by \ref{MT2} for $\dt_5$ (in place of $\ep$), $\ep_0,$ $\{x_1, x_2,...,x_m\},$ ${\cal G}_5$ (in place of ${\cal F}$), $\Delta,$ $K$ and $\eta_0.$

Define $\dt=\min\{\dt_6, \dt_5\},$  ${\cal G}={\cal G}_6\cup {\cal G}_5$ and ${\cal P}={\cal P}_1\cup {\cal P}_2.$

Now suppose that $L_1,L_2: C(X)\to A$ are two unital $\dt$-${\cal G}$-multiplicative \morp s, where $A$ is a unital simple \CA\, of tracial rank at most one, which satisfy the assumption
for the above defined $\Delta,$ $\eta,$ ${\cal H},$ ${\cal U},$ $\gamma_1$ and $\gamma_2.$

Let ${\cal H}_1\subset A$ (in place of ${\cal H}$)  be a finite subset  and $\sigma>0$ for $L_1,$ $L_2,$ ${\cal U},$
$\lambda_2'/8$ (in place of $\Gamma$) and
$\min\{\lambda_1'/8,\lambda_2'/8\}$ (in place of $\lambda$) (for $C=C(X)$) be required by \ref{length}.
Let $\ep_{00}=\sigma.$

Let $\dt_7=\min\{\sigma/2, \dt\}.$

By applying \ref{MT2}, for ${\cal H}_1$ (in place of ${\cal H}$), there exist mutually orthogonal projections
$$P_1, P_2, P_3, p_1, p_2,...,p_m\in A$$ with
$P_2, P_3, p_1,p_2,...,p_m\in B,$ $P_1+P_2+P_3+\sum_{i=1}^m p_i=1_A,$
\beq\label{NMT1-10}
\tau(P_3)>1-\ep_0\andeqn K([P_1]+[P_2])\le [p_i],\,\,\,i=1,2,...,m,
\eneq
a unital $\dt_5$-${\cal G}_5$-multiplicative \morp\, $\psi: C(X)\to P_2BP_2$ whose range is contained in a finite dimensional \SCA,
unital $\dt_5$-${\cal G}_5$-multiplicative \morp s $H_1, H_2: C(X)\to P_3BP_3\subset P_3AP_3,$ where
$1_B=1-P_1,$ $B=\oplus_{j=1}^N B_j,$ $B_j=C(X_j, M_{r(j)})$ ($X_j=[0,1],$ or $X_j$  is a point) with
\beq\label{NMT1-11}
[H_1]|_{\cal P}=[H_2]|_{\cal P}=[h_0]|_{\cal P}
\eneq
for some unital \hm\, $h_0: C(X)\to B$ and a unitary $W_0\in A$ such that
\beq\label{NMT1-12}
\|L_1(g)-[P_1L_1(g)P_1\oplus \psi(g)\oplus \sum_{i=1}^m g(x_i)p_i\oplus H_1(g)]\|<\dt_7\andeqn\\\label{NMT1-12+}
\|{\rm Ad}\, W_0\circ L_2(g)-[P_1({\rm Ad}\, W_0\circ L_2(g))P_1\oplus \psi(g)\oplus
\sum_{i=1}^m g(x_i)p_i\oplus H_2(g)]\|<\dt_7
\eneq
for all $g\in {\cal G}_5,$
\beq\label{NMT1-13}
\mu_{t\circ H_i}(O_r)\ge \Delta(r/3)/2
\eneq
for all $r\ge \eta_0$ and for all $t\in T(C),$ where
$C=P_3BP_3,$
\beq\label{NMT1-13+}
T(P_2\oplus \sum_{i=1}^m p_i)<\ep_0
\eneq
for all $T\in T(B),$ and,
\beq\label{NMT1-14}
&&\|P_1a-aP_1\|<\ep_{00}\andeqn\\
 &&(1-P_1)a(1-P_1)\in_{\ep_{00}} B\tforal a\in {\cal H}_1\cup L_1({\cal G}_5\cup L_2({\cal G}_5).
\eneq
Moreover
\beq\label{NMT1-15}
[P_1L_1P_1]|_{\cal P}=[P_1{\rm Ad}\, W_0\circ L_2P_1]|_{\cal P}.
\eneq
Put $\Psi_1=P_1L_1P_1\oplus \psi$ and $\Psi_2=P_1{\rm Ad}\, W_0\circ L_2P_1\oplus \psi.$
By the choice of ${\cal H}_1,$
$$
{\rm dist}(\overline{\langle P_1L_1P_1(u)\rangle}, \overline{\langle P_1{\rm Ad}\, W_0\circ L_2P_1(u)\rangle})<\gamma_2'/4+\gamma_2'/2=\lambda_2
$$
for all $u\in {\cal U}.$  Let $D$ be a finite dimensional \SCA\, of $P_2B P_2$ such that
$\psi(C(X))\subset D.$ Then
\beq\label{NMT1-n1}
\langle \psi(u)\rangle \in CU(P_2BP_2)
\tforal u\in {\cal U}.
\eneq
It follows that
\beq\label{NMT1-16}
{\rm dist}(\overline{\langle \Psi_1(u)\rangle}, \overline{\langle \Psi_2(u)\rangle})<\lambda_2
\eneq
for all $u\in {\cal U}.$ By the choices of $K$ and $\{x_1, x_2,...,x_m\},$ there exists a unitary
$$W_1\in (P_1+P_2+\sum_{i=1}^m p_i)A(P_1+P_2+\sum_{i=1}^m p_i)$$ such that
\beq\label{NMT1-17}
\|W_1^* (\Psi_2(f)\oplus \sum_{i=1}^m f(x_i)p_i)W_1-\Psi_1(f)\oplus \sum_{i=1}^m f(x_i)p_i\|<\ep/2\tforal f\in {\cal F}.
\eneq

Define $\Phi_1: C(X)\to B$ by
$$
\Phi_1(f)=\psi(f)\oplus \sum_{i=1}^m f(x_i)p_i\oplus H_1(f)
$$
for all $f\in C(X)$ and  define
$\Phi_2: C(X)\to B$ by
$$
\Phi_2(f)=\psi(f)\oplus \sum_{i=1}^m f(x_i)p_i\oplus H_2(f)
$$
for all $f\in C(X).$

By the choice of $\dt_4,$ ${\cal G}_4$ and ${\cal H}_1$, and applying \ref{length}, we obtain that
\beq\label{NMT1-18}
{\rm dist}(\overline{\langle \Phi_1(u)\rangle}, \overline{\langle \Phi_2(u)\rangle})<\lambda_2'/8\,\,\,{\rm (in}\,\,\, B{\rm )}
\eneq
for all $u\in {\cal U}$ and
\beq\label{NMT1-19}
|T\circ \Phi_1(g)-T\circ \Phi_2(g)|<\lambda_1'/4
\eneq
for all $g\in {\cal H}$ and for all $T\in T(B).$

Combining with  (\ref{NMT1-13+}), we obtain that
\beq\label{NMT1-20}
|t\circ H_1(f)-t\circ H_2(f)|\le \lambda_1'/4+2\ep_0<\lambda_1'
\eneq
for all $f\in {\cal H}$ and for all $t\in T(C).$
Using the de la Harp-Skandalis determinant, combining
(\ref{NMT1-n1}),
(\ref{NMT1-18}) and (\ref{NMT1-13+}), we compute that
\beq\label{NMT1-21}
{\rm dist}(\overline{\langle H_1(u)\rangle},
\overline{\langle H_2(u)\rangle})<\lambda_2'/4+2N\ep_0<\lambda_2'.
\eneq
for all $u\in {\cal U}.$
Then, by (\ref{NMT1-11}) and by applying \ref{NMT1}, there exists a unitary $W_2\in C$ such that
\beq\label{NMT1-23}
\|{\rm Ad}\, W_2\circ H_2(f)-H_1(f)\|<\ep/2
\eneq
for all $f\in {\cal F}.$
Define $W=W_0(W_1\oplus W_2).$ Then, by (\ref{NMT1-12+}), (\ref{NMT1-17}) and (\ref{NMT1-23}), we finally obtain that
$$
\|{\rm Ad}\, W\circ L_2(f)-L_1(f)\|<\ep
$$
for all $f\in {\cal F}.$

\end{proof}

\begin{df}\label{PMP}
Let $X$ be a compact metric space and $P\in M_r(C(X))$ be a projection, where $r\ge 1$ is an integer. Put $C=PM_r(C(X))P.$
Suppose $\tau\in T(C).$ It is known  that there exists a probability measure $\mu_\tau$ on $X$ such that
$$
\tau(f)=\int_X  t_x(f(x))d\mu_\tau(x)\rforal f\in C
$$
where  $t_x$ is the normalized trace on $P(x)M_rP(x)$ for all $x\in X$ (see 2.17 of \cite{Lncrell}).

Suppose that $Y$ is a finite CW complex, $r\ge 1$ is an integer and $P\in M_r(C(Y))$ is a projection.
Let $X\subset Y$ be a compact subset. Let $\pi: M_r(C(Y))\to M_r(C(X))$ be the quotient map defined
by $\pi(f)=f|_X$ for all $f\in M_r(C(Y)).$
\end{df}

\begin{cor}\label{CPM}
Suppose that $Y$ is a finite CW complex, $r\ge 1$ is an integer and $P\in M_r(C(Y))$ is a non-zero projection.
Define $C=\pi(PM_r(C(Y))P)$ as defined above.
Then Theorem \ref{MT1} holds when $C(X)$ is replaced by $C$ and using the measure defined in \ref{PMP}.
\end{cor}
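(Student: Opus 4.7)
The plan is to deduce this from Theorem \ref{MT1} via a corner-extension argument. Since $Y$ is a finite CW complex, standard $K$-theory of finite CW complexes provides an integer $N\ge 1$ and a projection $P'\in M_N(C(Y))$ such that $P\oplus P'$ is Murray--von Neumann equivalent in $M_{r+N}(C(Y))$ to $1_{M_k(C(Y))}$ for some integer $k$. Applying $\pi$ then realizes $C$ as a full unital corner of $M_k(C(X))$, with complementary corner $C''$ arising from $P'$.

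Given unital $\dt$-${\cal G}$-multiplicative \morp s $L_1,L_2\colon C\to A$ satisfying the hypotheses, I extend each to a unital $\dt'$-${\cal G}'$-multiplicative \morp\ $\tilde L_i\colon M_k(C(X))\to M_k(A)$ by pairing $L_i$ on the $C$-corner with a single auxiliary map $\Phi\colon C''\to M_k(A)$, constructed via a point-evaluation at some $x_0\in X$ and common to both extensions. One verifies that $\tilde L_1$ and $\tilde L_2$ satisfy the hypotheses of Theorem \ref{MT1} for $M_k(C(X))$ and $M_k(A)$ in place of $C(X)$ and $A$ (the latter algebra also has tracial rank at most one): the $KL$-equality and the Hausdorffized-unitary distance transfer directly; the tracial discrepancies on ${\cal H}$ transfer after matrix amplification; and the measure lower bound $\mu_{\tau\circ \tilde L_i}(O_r)\ge \Delta'(r)$ follows from the bound on $\mu_{\tau\circ L_i}$ in Definition \ref{PMP}, provided $P'$ is chosen of sufficiently small relative rank so that the point-mass contributed by $\Phi$ at $x_0$ does not swamp $\Delta$.

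Applying the $M_k(C(X))$-version of Theorem \ref{MT1} (obtained from the stated version by routine amplification of all data: ${\cal F}$, ${\cal U}$, ${\cal H}$, ${\cal P}$) yields a unitary $\tilde W\in M_k(A)$ with $\|{\rm Ad}\,\tilde W\circ \tilde L_1(f)-\tilde L_2(f)\|<\ep$ on the amplified finite set. Since both extensions send $1_C$ to the same projection $e\in M_k(A)$, a Murray--von Neumann adjustment using stable rank one of $M_k(A)$ produces a unitary $\tilde W'$ with $\tilde W'e(\tilde W')^*=e$ and ${\rm Ad}\,\tilde W'\circ \tilde L_1\approx \tilde L_2$ on the amplified set; its restriction to the corner $eM_k(A)e\cong A$ is the required unitary $W$. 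The main technical obstacle I expect is reconciling the measure $\mu_{\tau\circ L_i}$ of Definition \ref{PMP} (defined via local fiber traces on $C$) with the standard spectral measure of $\tau\circ \tilde L_i$ on $M_k(C(X))$; a direct calculation shows they are comparable up to a bounded factor determined by the local rank of $P$, so the lower bound transfers after adjusting $\Delta$ by a constant.
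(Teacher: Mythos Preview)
Your overall plan---embed $C$ as a full corner of $M_k(C(X))$, extend $L_1,L_2$ upward, apply Theorem~\ref{MT1}, and cut back---is \emph{different} from the paper's route. The paper goes in the opposite Morita direction: it finds an integer $d$ and a projection $Q\in M_d(PM_r(C(Y))P)$ with $QM_d(PM_r(C(Y))P)Q\cong M_k(C(Y))$ (so $M_k(C(X))$ sits as a full corner \emph{inside} $M_d(C)$), amplifies $L_i$ to $L_i\otimes{\rm id}_{M_d}$, and \emph{restricts} to that corner. Since amplification and restriction of \morp s are automatic, no extension problem arises; the resulting unitary is then transferred back to $A$ by a second amplify--restrict step, using that $P$ reappears (up to a unitary $W$) as a projection $Q_1$ in a matrix algebra over $M_k(C(Y))$.

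Your proposal has a genuine gap at the extension step. Writing $Q=\pi(P)$ so that $C=QM_k(C(X))Q$ and $C''=(1-Q)M_k(C(X))(1-Q)$, prescribing $\tilde L_i$ to equal $L_i$ on $C$ and $\Phi$ on $C''$ does \emph{not} define $\tilde L_i$ on the off-diagonal bimodule $QM_k(C(X))(1-Q)$; you would need a bimodule map intertwining $L_i$ with $\Phi$, and for an almost-multiplicative $L_i$ paired with a point-evaluation $\Phi$ there is no evident choice. Worse, for the target $M_k(A)$ you would need $\Phi:C''\to M_{k-1}(A)$ to be unital, which forces a unital embedding of $M_{k-n}$ (the fibre of $C''$) into $M_{k-1}(A)$; this can fail outright when $K_0(A)$ lacks the needed divisibility. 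Any clean way to manufacture $\tilde L_i$ (for instance via elements $v_1,\dots,v_m$ with $\sum_j v_jQv_j^*=1$ and $\tilde L_i(a)=[L_i(Qv_j^*av_lQ)]_{j,l}\in M_m(A)$) is exactly the paper's amplify--restrict argument in disguise. Incidentally, your worry about the rank of $P'$ is unnecessary: had the extension been defined, one would have $\mu_{\tau\circ\tilde L_i}\ge (n/k)\,\mu_{\tau\circ L_i}$ on every ball, so the $\Delta$-bound transfers with a fixed constant regardless of $P'$.
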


\begin{proof}
Clearly the corollary holds if $C=M_r(C(X)).$

To prove the general case, we may assume that $Y$ is connected.
Then there is an integer $d\ge 1$ and a projection $Q\in M_d(PM_r(C(Y))P)$ such that
$QM_r(PM_r(C(Y))P)Q\cong M_k(C(Y))$ for some integer $k\ge 1.$  Put $C_1=QM_r(PM_r(C(Y))P)Q.$
Then there exists a projection $Q_1\in M_{k_1}(C_1)$ and a unitary $W\in M_{dk_1}(PM_r(C(Y))P)$ such that
$W^*Q_1W=P.$
 Keep the notation
$\pi$ as in \ref{PMP}.
Note that, for any unital \morp\, $L_i: C\to A,$ we obtain a unital \morp\, $L_i\otimes {\rm id}_{M_d}:
M_r(C)\to M_r(A).$ Put $\psi_{i,1}=(L_1\otimes {\rm id}_{M_r})|_{\pi(C_1)}$ and
$\psi_{i,2}=(\psi_2\otimes {\rm id}_{M_{k_1}})|_{\pi(Q_1)M_{k_1}(C_1)\pi(Q_1)}.$ We see that the corollary follows
by first considering $\psi_{i,1}$ ($i=1,2$) and then $\psi_{i,2}$ ($i=1,2$).
\end{proof}

\begin{df}\label{Full}
{\rm
Let $A$ be a unital \CA\, and let $C$ be another \CA. Let $L: C\to A$ be a positive linear map.
Let $\Theta: C_+\setminus\{0\} \to \N\times \R_+$ be a map.  We write
$\Theta(c)=(N(\Theta(c)), R(\Theta(c)))$ for $c\in C_+\setminus \{0\},$ where $N(\Theta(c))\in \N$ and
$R(\Theta(c))\in R_+.$
Suppose that ${\cal S}\subset C_+$ is a subset. We say the map $L$
is ${\cal S}$-$\Theta$-full, if, for each $s\in {\cal S},$ there are
$x_1, x_2,...,x_{N(\Theta(s))}$ such that $\|x_j\|\le R(\Theta(s)),$ $j=1,2,...,N(\Theta(s))$ and
\beq\label{full-1}
1_A=\sum_{j=1}^{N(\Theta(s))} x_j^*L(s)x_j.
\eneq
}
\end{df}

 The following is known and easy to prove.  Only part (1) is actually used in this paper.  Both hold
 for more general unital simple \CA s.  For example, the class of unital separable simple
 \CA s which satisfy the strict comparison property for positive elements.

\begin{lem}\label{Lfull}
Let $X$ be a compact subset of a finite CW complex $Y,$ let $P\in M_r(C(Y))$ be a projection, where $r\ge 1$ is an integer, and let $\pi: M_r(C(Y))\to M_r(C(X))$ be defined by $\pi(f)=f|_X,$ Put $C=\pi(PM_r(C(Y))P).$

{\rm (1)} Suppose that $\Theta: C_+\setminus \{0\}\to \N\times (\R_+\setminus \{0\})$ is a map. Then there exists
a non-decreasing map $\Delta: (0,1)\to (0,1)$ satisfying the following:
For any $\eta>0,$ there exists a finite subset ${\cal S}\subset C_+\setminus\{0\}$ such that,
if $A$ is a unital separable simple \CA\, with $TR(A)\le 1$ and if $L: C\to A$ is a unital ${\cal S}$-$\Theta$-full positive linear map, then
$$
\mu_{\tau\circ L}(O_r)\ge \Delta(r)\tforal  \tau\in T(A)
$$
for all open balls $O_r$ with radius $r\ge \eta.$

{\rm (2)}  Suppose that $\Delta: (0,1)\to (0,1)$ is a nondecreasing map. Then there exists a map $\Theta: C_+\setminus\{0\}
\to \N\times \R_+\setminus\{0\}$ satisfying the following:
For any finite subset ${\cal S}\subset C_+\setminus\{0\},$ there exists $\eta>0$ such that, if $A$ is a unital
separable simple \CA\, with $TR(A)\le 1$ and
$L: C\to A$ is a unital positive linear map  for which
$$
\mu_{\tau\circ L}(O_r)\ge \Delta(r)\tforal  \tau\in T(A)
$$
for all open balls $O_r$ with radius $r\ge \eta,$ then $L$ is ${\cal S}$-$\Theta$-full.

\end{lem}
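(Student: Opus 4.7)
For part (1), the plan is to construct $\Delta$ and each ${\cal S}_\eta$ from a single dyadic family of bump functions chosen once and for all. I first treat $C = C(X)$ and indicate the reduction of the general case at the end. For every integer $n \geq 1$ fix a $(2^{-n-2})$-dense finite set $X_n \subset X$, and for each $x \in X_n$ a function $g_{x,n} \in C_+$ with $0 \leq g_{x,n} \leq 1$, $g_{x,n} \equiv 1$ on $O(x, 2^{-n-2})$, and $\operatorname{supp}(g_{x,n}) \subset O(x, 2^{-n-1})$. Put ${\cal T}_n = \{g_{x,n} : x \in X_n\}$ and
$$
\delta_n = \min_{g \in {\cal T}_n} \frac{1}{N(\Theta(g))\,R(\Theta(g))^2} > 0.
$$
Since the $\delta_n$ need not be monotone, replace them by $\tilde{\delta}_n = \min_{m \leq n} \delta_m$, which is positive and non-increasing in $n$. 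For $r \in (0,1)$ let $m(r)$ be the smallest integer with $3 \cdot 2^{-m(r)-2} \leq r$; this is non-increasing in $r$. Define
$$
\Delta(r) = \min\bigl(\tfrac{1}{2},\, \tfrac{1}{2}\tilde{\delta}_{m(r)}\bigr),
$$
which is non-decreasing and takes values in $(0, 1)$.

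Given $\eta > 0$, take ${\cal S}_\eta = \bigcup_{m=1}^{m(\eta)} {\cal T}_m$, a finite set. Suppose $L : C \to A$ is unital and ${\cal S}_\eta$-$\Theta$-full; fix $\tau \in T(A)$ and an open ball $O(y, r)$ with $r \geq \eta$. Then $m(r) \leq m(\eta)$, so ${\cal T}_{m(r)} \subset {\cal S}_\eta$, and by density one finds $x \in X_{m(r)}$ with $d(x,y) < 2^{-m(r)-2}$; the triangle inequality then gives $\operatorname{supp}(g_{x,m(r)}) \subset O(y, r)$. Applying $\tau$ to the fullness identity $1_A = \sum_j x_j^* L(g_{x,m(r)}) x_j$ and using the trace inequality $\tau(x^* a x) \leq \|x\|^2 \tau(a)$ produces $\tau(L(g_{x,m(r)})) \geq \delta_{m(r)} \geq \tilde{\delta}_{m(r)} = 2\Delta(r)$. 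Since $g_{x,m(r)} \leq \chi_{O(y,r)}$, integrating against the measure $\mu_{\tau \circ L}$ yields $\mu_{\tau \circ L}(O(y, r)) \geq \tau(L(g_{x,m(r)})) \geq \Delta(r)$, which is the desired bound. For the general $C = \pi(P M_r(C(Y)) P)$, replace each $g_{x,n}$ by $\pi(P \cdot \widetilde{g_{x,n}})$ with $\widetilde{g_{x,n}}$ a Tietze extension to $Y$, and use the measure representation of \ref{PMP}; the only adjustment is a bounded constant factor coming from the local rank of $P$.

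For part (2), given $\Delta$, the plan is to read off $\Theta(s)$ from how $s$ sits inside $C$. For each $s \in C_+ \setminus \{0\}$ fix canonically $c_s > 0$ and a ball $O(x_s, r_s) \subset X$ on which $\|s(x)\| > c_s$ (for instance $c_s = \|s\|/2$ and $r_s$ the supremum of suitable radii). For any finite ${\cal S}$, set $\eta = \min_{s \in {\cal S}} r_s > 0$. If $L$ satisfies $\mu_{\tau \circ L}(O_r) \geq \Delta(r)$ for $r \geq \eta$, then integrating against $\mu_{\tau \circ L}$ (and, in the general case, incorporating the bounded rank of $P$) gives $\tau(L(s)) \geq c'_s \Delta(r_s) > 0$ uniformly in $\tau \in T(A)$, for some $c'_s > 0$ depending only on $s$. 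Strict comparison of positive elements in $A$, which follows from $TR(A) \leq 1$ via \cite{Lncltr1}, together with a standard Cuntz-semigroup argument then yields $N_s \in \mathbb{N}$ and $R_s > 0$ depending only on this lower bound and $\|s\|$ such that $1_A = \sum_{j=1}^{N_s} y_j^* L(s) y_j$ with $\|y_j\| \leq R_s$. Setting $\Theta(s) = (N_s, R_s)$ gives the required fullness. The main technical obstacle is this quantitative lift from a uniform lower trace bound to a bounded fullness decomposition; once established via strict comparison and $sr(A) = 1$, both parts extend to any unital separable simple $A$ with the strict comparison property, which accounts for the paper's remark.
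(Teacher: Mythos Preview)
The paper does not supply a proof here; it states that both parts are ``known and easy to prove,'' notes that only part~(1) is used, and remarks that both hold for any unital separable simple $C^*$-algebra with strict comparison.  Your argument for part~(1) is correct and is the natural one: applying a trace to the fullness identity and using $\tau(x^*ax)=\tau(a^{1/2}xx^*a^{1/2})\le\|x\|^2\tau(a)$ forces $\tau(L(g))\ge\bigl(N(\Theta(g))\,R(\Theta(g))^2\bigr)^{-1}$, and your dyadic scheme of bump functions converts this into the required measure lower bound.  The reduction to general $C$ via \ref{PMP} is handled the same way the paper handles it in \ref{CPM}.

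For part~(2) your outline is sound and you have correctly isolated the one substantive step, namely that a uniform bound $\tau(L(s))\ge\delta>0$ for all $\tau\in T(A)$ yields a decomposition $1_A=\sum_{j=1}^{N}y_j^*L(s)y_j$ with $N$ and $\max_j\|y_j\|$ controlled by $\delta$ and $\|s\|$ alone, independently of $A$.  This does follow from strict comparison together with stable rank one (both consequences of $TR(A)\le 1$): one takes $N>2\|s\|/\delta$ so that $d_\tau\bigl((L(s)-\delta/2)_+^{\oplus N}\bigr)>1$, obtains a projection $q\sim 1_A$ in the hereditary subalgebra of $(L(s)-\delta/2)_+^{\oplus N}$, and then uses functional calculus to extract the $y_j$ with $\|y_j\|\le(2/\delta)^{1/2}$.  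But this is not a one-line consequence, and it deserves either that short argument or a precise reference rather than an appeal to ``a standard Cuntz-semigroup argument.''  Your closing remark about the extension to strict-comparison algebras matches the paper's own comment.
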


\begin{thm}\label{MT4}
Let $C$ be a unital AH-algebra and let $\Theta: C_+\setminus \{0\}\to  \N\times \R_+$ be a map. Let $\ep>0,$  ${\cal F}\subset C$ be a finite subset.  There exists a finite subset ${\cal S}\subset A_+\setminus \{0\},$ $\dt>0,$ $\sigma_1>0,$ $\sigma_2>0,$ a finite subset ${\cal G}\subset C,$ a finite subset ${\cal P}\subset \underline{K}(C),$ a finite subset ${\cal H}\subset A_{s.a.}$ and a finite subset ${\cal U}\subset U_c(K_1(C))$ satisfying the following:
Suppose that $A$ is a unital separable simple \CA\, with $TR(A)\le 1$ and suppose that $\phi, \psi: C\to A$ are two unital
$\dt$-${\cal G}$-multiplicative \morp s such that
$\phi$ and $\psi$ are ${\cal S}$-$\Theta$-full,
\beq\label{MT4-1}
[\phi]|_{\cal P}&=&[\psi]|_{\cal P},\\
|\tau\circ \phi(g)-\tau\circ \psi(g)|&<&\sigma_1\tforal g\in {\cal H},\\
{\rm dist}(\overline{\langle\phi(u)\rangle},
\overline{\langle \psi(u)\rangle})&<&\sigma_2
\tforal u\in {\cal U}.
\eneq
Then there exists a unitary $w\in A$ such that
\beq\label{MT4-2}
\|{\rm Ad}\, w\circ \phi(f)-\psi(f)\|<\ep\tforal f\in {\cal F}.
\eneq
\end{thm}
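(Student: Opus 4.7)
Write the AH-algebra as $C=\overline{\bigcup_n C_n}$ with $C_n=\bigoplus_{i=1}^{\ell_n}C_n^{(i)}$, each $C_n^{(i)}=\pi_{n,i}(P_{n,i}M_{r(n,i)}(C(Y_{n,i}))P_{n,i})$ being a building block of the kind handled by Corollary \ref{CPM}; denote by $\imath_n\colon C_n\to C$ the embedding. Fix $n$ so large that every $f\in{\cal F}$ lies within $\ep/32$ of $\imath_n(C_n)$, producing a finite set ${\cal F}'=\bigsqcup_i{\cal F}'_i$ with ${\cal F}'_i\subset C_n^{(i)}$. For each component, apply Lemma \ref{Lfull}(1) to $\Theta$ restricted to $C_n^{(i)}$ (via $\imath_n$) to obtain a nondecreasing $\Delta^{(i)}\colon(0,1)\to(0,1)$ together with the following ability: given any $\eta>0$, a finite subset ${\cal S}^{(i)}_\eta\subset (C_n^{(i)})_+\setminus\{0\}$ can be selected so that ${\cal S}^{(i)}_\eta$-$\Theta$-fullness of a unital positive map into any $A$ with $TR(A)\le 1$ forces $\mu_{\tau\circ(\cdot)}(O_r)\ge\Delta^{(i)}(r)$ for $r\ge\eta$. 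Now apply Corollary \ref{CPM} to each $C_n^{(i)}$ with data $(\ep/2,{\cal F}'_i,\Delta^{(i)})$ to produce component parameters $\eta^{(i)},\dt^{(i)},{\cal G}^{(i)},{\cal P}^{(i)},{\cal H}^{(i)},{\cal U}^{(i)},\gamma_{1}^{(i)},\gamma_{2}^{(i)}$, and package the outputs of the theorem as ${\cal S}=\imath_n\!\bigl(\bigsqcup_i{\cal S}^{(i)}_{\eta^{(i)}}\bigr)$, taking ${\cal G},{\cal P},{\cal H},{\cal U}$ to be the $\imath_n$-images of the unions of the component data, augmented so that ${\cal P}$ also detects the classes $[1_{C_n^{(i)}}]\in K_0(C)$, with $\dt,\sigma_1,\sigma_2$ the minima over $i$ of the component values, shrunk further to absorb the $\ep/32$ approximation and a routine perturbation making the projections $\phi(1_{C_n^{(i)}})$ and $\psi(1_{C_n^{(i)}})$ honestly mutually orthogonal.

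Given $\phi,\psi$ satisfying the hypotheses, the inclusion of $[1_{C_n^{(i)}}]$ in ${\cal P}$ forces $[\phi(1_{C_n^{(i)}})]=[\psi(1_{C_n^{(i)}})]$ in $K_0(A)$, and since $A$ has stable rank one (from $TR(A)\le 1$) a preliminary unitary conjugation (absorbed into the final $w$) reduces to the case $e_i:=\phi(1_{C_n^{(i)}})=\psi(1_{C_n^{(i)}})$ for every $i$. The restrictions $\phi_i:=\phi|_{C_n^{(i)}}$ and $\psi_i:=\psi|_{C_n^{(i)}}$ are then unital $\dt^{(i)}$-${\cal G}^{(i)}$-multiplicative maps into the unital simple corner $e_iAe_i$, which inherits $TR\le 1$. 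Multiplying the witness identity $1_A=\sum_jx_j^*\phi(s)x_j$ for $s\in{\cal S}^{(i)}_{\eta^{(i)}}$ on both sides by $e_i$, and using $\phi(s)=e_i\phi(s)e_i$, gives $e_i=\sum_j(x_je_i)^*\phi_i(s)(x_je_i)$, so that $\phi_i$ (and similarly $\psi_i$) is ${\cal S}^{(i)}_{\eta^{(i)}}$-$\Theta$-full in $e_iAe_i$; Lemma \ref{Lfull}(1) then delivers the measure lower bound $\Delta^{(i)}$ required by Corollary \ref{CPM}. The $KL$-equality, tracial closeness and de la Harpe--Skandalis determinant closeness hypotheses restrict immediately to the corresponding conditions on ${\cal P}^{(i)},{\cal H}^{(i)},{\cal U}^{(i)}$. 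Corollary \ref{CPM} therefore yields a unitary $w_i\in e_iAe_i$ with $\|\mathrm{Ad}\,w_i\circ\phi_i(f)-\psi_i(f)\|<\ep/2$ for $f\in{\cal F}'_i$. Setting $w=\sum_iw_i\in A$ and combining with the $\ep/32$ approximation of ${\cal F}$ by ${\cal F}'$ gives $\|\mathrm{Ad}\,w\circ\phi(f)-\psi(f)\|<\ep$ for all $f\in{\cal F}$.

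The main obstacle is logistical: the finite data ${\cal S},{\cal G},{\cal P},{\cal H},{\cal U}$ and the thresholds $\dt,\sigma_1,\sigma_2$ must depend only on $(\ep,{\cal F},\Theta)$, so the choice of the index $n$, of the direct-sum decomposition of $C_n$, and of all component parameters must be made up-front, before seeing $A,\phi,\psi$. Lemma \ref{Lfull}(1) is precisely the mechanism needed to trade the qualitative fullness hypothesis for the quantitative measure lower bound demanded by \ref{CPM}, and the direct-sum structure of $C_n$ is handled by the preliminary conjugation equating $\phi(1_{C_n^{(i)}})$ and $\psi(1_{C_n^{(i)}})$. Once these two technical reductions are in place, the uniqueness statement of Corollary \ref{CPM} does all the real work component by component.
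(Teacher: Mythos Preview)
Your plan follows the same route as the paper: pass to a building block $B_n=\phi_{n,\infty}(C_n)$ containing (an approximation of) ${\cal F}$, then combine Corollary~\ref{CPM} with Lemma~\ref{Lfull}(1). The paper's proof is terser still---it applies~\ref{CPM} once to all of $B_n$ (the statement of~\ref{CPM} allows a disconnected base $Y$), rather than summand by summand as you do.

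Your componentwise reduction introduces one step that is not quite ``immediate.'' Passing from $T(A)$ to $T(e_iAe_i)$ rescales traces by $1/\tau(e_i)$, so the bound $|\tau\circ\phi(g)-\tau\circ\psi(g)|<\sigma_1$ only yields $|t\circ\phi_i(h)-t\circ\psi_i(h)|\lesssim\sigma_1/\tau(e_i)$ in the corner, and the de la Harpe--Skandalis distance scales by the same factor. To make these less than $\gamma_1^{(i)},\gamma_2^{(i)}$ you need a uniform lower bound on $\tau(e_i)$ that is independent of $A,\phi,\psi$. The natural fix is to enlarge ${\cal S}$ by the component units $\imath_n(1_{C_n^{(i)}})$: then ${\cal S}$-$\Theta$-fullness forces $\tau(e_i)\ge\bigl(N(\Theta(1_{C_n^{(i)}}))\,R(\Theta(1_{C_n^{(i)}}))^2\bigr)^{-1}$, and you shrink $\sigma_1,\sigma_2$ by this constant. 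With that amendment the argument is complete.
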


\begin{proof}
Let $C=\lim_{n\to\infty} (C_n, \phi_n),$ where $C_n=P_nM_{r(n)}(C(Y_n))P_n,$ $X_n$ is a finite CW complex,
$r(n)\ge 1$ is an integer, $P_n\in M_{r(n)}(C(Y_n))$ is a projection and $\phi_n: C_n\to C_{n+1}$ is a unital \hm.
Let $\phi_{n, \infty}: C_n\to C$ be the unital \hm\, induced by the inductive limit system.
Then, for each $n,$ $\phi_{n, \infty}(C_n)\cong \pi_n(P_n)M_{r(n)}(C(X_n))\pi_n(P_n),$ where $X_n\subset Y_n$
is a compact subset and
$\pi_n: M_{r(n)}(C(Y_n))\to M_{r(n)}(C(X_n))$ is defined by
$\pi_n(f)=f|_{X_n}.$  Let $B_n=\pi_n(P_n)M_{r(n)}(C(X_n))\pi_n(P_n),$ $n=1,2,....$  Note that $B_n\subset B_{n+1},$ $n=1,2,....$
We may write $C=\overline{\cup_{n=1}^{\infty} B_n}.$
Let $\ep>0$ and ${\cal F}\subset C$ be a finite subset. Without loss of generality, we may assume
that ${\cal F}\subset  B_n$ for some integer $n\ge 1.$  From this it is clear that we can reduce the general case to the case
that $C=B_n.$ Then the result follows from \ref{CPM} and \ref{Lfull}.
\end{proof}

\begin{cor}\label{CM1}
Let $C$ be a unital AH-algebra and let $\Theta: C_+\setminus\{0\}: \N\times \R_+$ be a map. For any $\ep>0$ and any finite subset ${\cal F}\subset C,$ there exists $\sigma_1>0,$ $\sigma_2>0,$ a finite subset ${\cal S}\subset A_+\setminus\{0\},$
a finite subset ${\cal P}\subset \underline{K}(C),$
a finite subset ${\cal H}\subset A_{s.a.}$ and a finite subset
${\cal U}\subset U_c(K_1(C))$ satisfying the following:

Suppose that $A$ is a unital separable simple \CA\, with $TR(A)\le 1$ and suppose that $\phi,\,\psi: C\to A$ are two unital monomorphisms
which are ${\cal S}$-$\Theta$-full such that
\beq\label{CM1-1}
[\phi]|_{\cal P}&=&[\psi]|_{\cal P}\\
|\tau\circ \phi(g)-\tau\circ \psi(g)|&<&\sigma_1\tforal g\in {\cal H}\tand \tforal \tau\in T(A),\\
{\rm dist}(\phi^{\ddag}({\bar u}), \psi^{\ddag}(u))&<&\sigma_2\tforal u\in {\cal U}.
\eneq
Then there exists a unitary  $w\in A$ such that
\beq\label{CM1-2}
\|{\rm Ad}\, w\circ \phi(f)-\psi(f)\|<\ep\tforal f\in {\cal F}.
\eneq

\end{cor}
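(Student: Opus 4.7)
The plan is to derive Corollary \ref{CM1} as an essentially immediate consequence of Theorem \ref{MT4}. First I would apply \ref{MT4} to the given $\ep$ and ${\cal F}$ to obtain the finite subset ${\cal S}\subset C_+\setminus\{0\}$, positive numbers $\dt,\sigma_1,\sigma_2$, and finite subsets ${\cal G}\subset C$, ${\cal P}\subset \underline{K}(C)$, ${\cal H}\subset C_{s.a.}$ and ${\cal U}\subset U_c(K_1(C))$. I would then take ${\cal S}, \sigma_1,\sigma_2,{\cal P},{\cal H},{\cal U}$ as the data demanded by \ref{CM1} (discarding $\dt$ and ${\cal G}$, which do not appear in the statement of \ref{CM1}).

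The key point is that when $\phi$ and $\psi$ are genuine unital monomorphisms, rather than merely $\dt$-${\cal G}$-multiplicative maps, they automatically satisfy the multiplicativity hypothesis of \ref{MT4} for any $\dt>0$ and any ${\cal G}$. Moreover, for any $u\in{\cal U}\subset U_c(K_1(C))$ the element $u$ is a unitary in some $U(M_m(C))$, so $\phi(u)^*\phi(u)=\phi(u)\phi(u)^*=1$ and hence $\langle\phi(u)\rangle=\phi(u)$; similarly $\langle\psi(u)\rangle=\psi(u)$. By the definition of $\phi^{\ddag}$ in \ref{Harp-Scand} we have $\phi^{\ddag}(\bar u)=\overline{\phi(u)}=\overline{\langle\phi(u)\rangle}$ in $U(M_{\infty}(A))/CU(M_{\infty}(A))$, and analogously for $\psi$. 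Therefore the condition
$$
{\rm dist}(\phi^{\ddag}(\bar u),\,\psi^{\ddag}(\bar u))<\sigma_2
$$
appearing in the hypothesis of \ref{CM1} is literally identical to the condition
$$
{\rm dist}(\overline{\langle\phi(u)\rangle},\,\overline{\langle\psi(u)\rangle})<\sigma_2
$$
required by \ref{MT4}.

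Combining these observations with the remaining hypotheses of \ref{CM1}, namely that $\phi,\psi$ are ${\cal S}$-$\Theta$-full, $[\phi]|_{\cal P}=[\psi]|_{\cal P}$, and $|\tau\circ\phi(g)-\tau\circ\psi(g)|<\sigma_1$ for $g\in{\cal H}$ and $\tau\in T(A)$, all hypotheses of \ref{MT4} are verified. The conclusion of \ref{MT4} then produces a unitary $w\in A$ with $\|{\rm Ad}\,w\circ\phi(f)-\psi(f)\|<\ep$ for all $f\in{\cal F}$, which is exactly the conclusion of \ref{CM1}. There is no substantial obstacle; the only item that merits minor attention is the identification of $\phi^{\ddag}(\bar u)$ with $\overline{\langle\phi(u)\rangle}$ for an honest homomorphism, which is purely a matter of unwinding the definitions recalled in \ref{Harp-Scand} and \ref{KKdef}.
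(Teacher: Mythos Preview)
Your proposal is correct and is exactly the argument the paper intends: Corollary \ref{CM1} is stated in the paper without proof, as an immediate specialization of Theorem \ref{MT4} to the case of honest unital monomorphisms, and your observation that $\phi^{\ddag}(\bar u)=\overline{\langle\phi(u)\rangle}$ for a genuine homomorphism is precisely the identification needed to match the hypotheses.
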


%
\begin{thm}\label{MT5}
Let $C$ be a unital AH-algebra and let $A$ be a unital separable simple \CA\, with $TR(A)\le 1.$ Suppose that
$\phi, \psi: C\to A$ are two unital monomorphisms. Then
$\phi$ and $\psi$ are approximately unitarily equivalent if and only if \beq\label{MT5-1}
[\phi]&=&[\psi]\,\,\,{\rm in}\,\,\, KL(C,A)\\
\phi_{\sharp}&=&\psi_{\sharp}\tand\\
\phi^{\ddag}&=&\psi^{\ddag}.
\eneq

\end{thm}

\begin{thm}\label{MT5C}
Let $C$ be a unital AH-algebra and let $A$ be a unital separable simple \CA\, with $TR(A)\le 1.$ Suppose that
$\phi, \psi: C\to A$ are two unital monomorphisms. Then
$\phi$ and $\psi$ are approximately unitarily equivalent if and only if \beq\label{MT5-1}
[\phi]&=&[\psi]\,\,\,{\rm in}\,\,\, KL(C,A)\\
\phi_{\sharp}&=&\psi_{\sharp}\tand\\
\phi^{\dag}&=&\psi^{\dag}.
\eneq

\end{thm}

Note that $[\phi]=[\psi],$ $\phi^{\dag}=\psi^{\dag}$ and
$\phi_{\sharp}=\psi_{\sharp}$ imply that
$\phi^{\ddag}=\psi^{\ddag}.$ Thus Theorem \ref{MT5} follows
from \ref{CM1} immediately.

\section{The range}

\begin{df}\label{Delta1}
Let $X$ be a compact metric space and let $C=PM_n(C(X))P,$ where
$P\in M_n(C(X))$ is a projection and $P(x)>0$ for all $x\in X,$  and let $A$ be a unital separable simple  \CA\, with $T(A)\not=\emptyset.$ Let $\gamma: T(A)\to T_{\rm f}(C)$ be a continuous affine map. For any $\tau\in T(A)$ and any non-empty open set $O\subset X,$ define
$$
\mu_{\gamma(\tau)}(O)=\sup\{\gamma(\tau)(f): 0\le f<1\andeqn {\rm supp}f\subset O\}.
$$
Since $\gamma(T(A))$ is compact, we conclude that
$$
\inf_{\tau\in T(A)} \mu_{\gamma(\tau)}(O)>0
$$
for every non-empty open subset $O\subset X.$

Fix $a\in (0,1).$ There are finitely many points
$x_1, x_2,..,x_m\in X$ such that $\cup_{n=1}^m O(x_i, a/2)\supset X.$
Let $O_a$ be an open ball of $X$ with center at a point $x$ and with radius $a.$ Then $O_a\supset O(x_i, a/2)$ for some $i.$
Define
\beq\label{Delta1-1}
\Delta_1(a)=\min_{\{1\le i\le m\}}\inf_{\tau\in T(A)}\mu_{\gamma(\tau)}(O(x_i, a/2))
\eneq
for all $a\in (0,1).$  It follows that
\beq\label{Delta1-2}
\mu_{\tau}(O_a)\ge \Delta_1(a)\tforal a>0.
\eneq
Note that, if $X$ is infinite, $\lim_{a\to 0}\Delta_1(a)=0.$
\end{df}

\begin{lem}\label{measure}
Let $C$ be as in \ref{Delta1} and $A$ be a unital separable simple \CA\, with $T(A)\not=\emptyset.$
Suppose that $\gamma: T(A)\to T_{\rm f}(C)$ is a continuous affine map.
For any $\eta>0,$ $0<\lambda_1, \lambda_2<1,$ there exists a finite subset ${\cal H}\subset C_{s.a.}$ and $\ep>0$ satisfying the following: for any unital positive linear map
$L: C\to A$ such that
\beq\label{measure-1}
|\tau\circ L(g)-\gamma(\tau)(g)|<\ep\tforal g\in {\cal H},
\eneq
then
\beq\label{measure-2}
\mu_{\tau\circ L}(O_r)\ge \lambda_1\Delta_1(a/2)/2(1+\lambda_2)\tforal a\ge \eta.
\eneq
\end{lem}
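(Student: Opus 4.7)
The plan is to reduce to the commutative case treated in Lemma \ref{keeptrace} via the canonical central embedding. Consider the unital $*$-\hm\ $\iota: C(X) \to C$ given by $\iota(f) = f \cdot 1_C$ (i.e., a scalar-valued function acts as multiplication by $fP$ on $PM_n(C(X))P$). For each $\tau \in T(A)$, the state $\tau \circ L$ on $C$ restricts via $\iota$ to a state on $C(X)$, and by construction the associated Borel probability measure on $X$ coincides with $\mu_{\tau\circ L}$ as defined in \ref{PMP} and \ref{Delta1}. Likewise, every $\gamma(\tau) \in T_{\rm f}(C)$ restricts to a state $\gamma(\tau) \circ \iota$ on $C(X)$ whose associated measure is $\mu_{\gamma(\tau)}$, and the construction in \ref{Delta1} (see (\ref{Delta1-2})) yields the uniform lower bound $\mu_{\gamma(\tau)}(O_r) \ge \Delta_1(r)$ for every open ball $O_r$ of radius $r > 0$ and every $\tau \in T(A)$.

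Given $\eta > 0$ and $0 < \lambda_1, \lambda_2 < 1$, apply Lemma \ref{keeptrace} with $\Delta := \Delta_1$ and the same parameters $\eta, \lambda_1, \lambda_2$ to obtain $\dt_0 > 0$ and a finite subset ${\cal G}_0 \subset C(X)$. Set ${\cal H} := \iota({\cal G}_0) \subset C_{s.a.}$ and $\ep := \dt_0$. If $L: C \to A$ is a unital positive linear map satisfying (\ref{measure-1}), then for every $\tau \in T(A)$ and every $g \in {\cal G}_0$,
\[
|(\tau \circ L \circ \iota)(g) - (\gamma(\tau) \circ \iota)(g)| < \dt_0.
\]
The two states $\tau\circ L\circ \iota$ and $\gamma(\tau)\circ \iota$ on $C(X)$ therefore satisfy the hypotheses of \ref{keeptrace} with reference measure bounded below by $\Delta_1$, and its conclusion gives exactly the desired estimate (\ref{measure-2}).

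The one small twist is that \ref{keeptrace} is phrased for two positive linear maps $\phi, \psi: C(X) \to A$, whereas here the ``reference'' datum is the continuous affine map $\gamma$ rather than a \CA ic map. This is cosmetic: the proof of \ref{keeptrace}, which rests on \ref{1keeptrace}, uses only the tracial functionals $\tau\circ\phi$, $\tau\circ\psi$ and a uniform lower bound on the measure associated to one of them, so replacing $\tau\circ\phi$ by $\gamma(\tau)\circ\iota$ throughout requires no change to the argument. I expect no substantive obstacle here; the main content of the lemma is the identification of the correct central embedding and the verification that the measure defined in \ref{Delta1} matches the one associated to the restricted state on $C(X)$, after which \ref{keeptrace} applies verbatim at finitely many scales between $\eta$ and $1$.
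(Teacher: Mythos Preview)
Your proposal is correct and is essentially the paper's approach: the paper simply remarks that the proof is ``almost identical to that of \ref{keeptrace}'' and omits it, and your reduction via the central embedding $\iota: C(X)\to C$, $f\mapsto fP$, makes this identification explicit. The observation that the argument of \ref{1keeptrace}/\ref{keeptrace} only uses the family of states $\tau\circ\phi$ (so that $\gamma(\tau)\circ\iota$ may play that role) is exactly what is needed.
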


The proof of this is almost identical to that of \ref{keeptrace}. We omit it.
%
%

\begin{lem}\label{RanL1}
Let $X$ be a finite CW complex and let $A$ be an infinite dimensional unital simple \CA\, with $TR(A)\le 1.$
Let $C=PM_r(C(X))P$ ($r\ge 1$), where $P\in M_r(C(X))$ is a projection.
Suppose that $e\in A$ is a non-zero projection. Then, there exists a non-zero projection $e_0\le e$ and
a unital monomorphism $h: C\to e_0Ae_0.$
\end{lem}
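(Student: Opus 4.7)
\emph{Plan.} The strategy is to embed $C$ first into a unital AF \CA\ via the Hausdorff--Alexandrov theorem, and then to embed that AF \CA\ unitally into a corner of $A$.

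First, replacing $A$ by $eAe$ (still unital, simple, infinite-dimensional, and of tracial rank at most one), one may assume $e=1_A$. Since $X$ is compact metric, there is a continuous surjection $\sigma\colon K\to X$ from the Cantor set $K$. Pulling back induces a unital $*$-embedding $\sigma^*\colon C(X)\hookrightarrow C(K)$ and hence a unital $*$-embedding $M_r(C(X))\hookrightarrow M_r(C(K))$. Setting $P':=\sigma^*(P)$ and $D:=P'M_r(C(K))P'$, restriction gives a unital embedding $j\colon C\hookrightarrow D$; the algebra $D$ is a unital AF \CA\ because $C(K)$ is AF.

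The remaining task is to embed $D$ unitally into a corner $f_\infty A f_\infty$ of $A$. Write $D=\overline{\bigcup_n D_n}$ as the closure of an increasing union of finite-dimensional unital \SCA s. By a standard intertwining argument, it suffices to construct a decreasing sequence of non-zero projections $f_n\in A$ with $\inf_{\tau}\tau(f_n)$ bounded below by some $\delta>0$, together with compatible unital $*$-embeddings $\iota_n\colon D_n\to f_nAf_n$. Two consequences of the hypotheses on $A$ are used at each stage: (i) the approximate divisibility supplied by $\tr(A)\le 1$, which allows one to match the $K_0$-ranks required by the inclusion $D_n\hookrightarrow D_{n+1}$; and (ii) strict comparison of positive elements by traces, which produces sub-projections realising those ranks. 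Passing to the limit yields a unital $*$-embedding $\iota\colon D\to f_\infty A f_\infty$ for some non-zero projection $f_\infty\in A$.

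Finally, set $e_0:=\iota(P')$ and $h:=\iota\circ j$. Then $e_0\le f_\infty\le 1_A=e$ is a non-zero projection in $A$, and $h\colon C\to e_0Ae_0$ is the desired unital $*$-monomorphism. The principal obstacle is the inductive construction of $\iota$: one must arrange compatibility of the finite-dimensional embeddings from stage to stage and ensure the limiting projection $f_\infty$ remains non-zero. This is where simplicity and infinite-dimensionality of $A$, together with $\tr(A)\le 1$, are essential.
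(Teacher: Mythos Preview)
Your approach differs substantially from the paper's. The paper's proof is essentially three lines: assuming $X$ connected, one finds $r$ mutually orthogonal, mutually equivalent non-zero projections $e_1,\dots,e_r\le e$ (possible since $eAe$ has $TR\le 1$ and hence contains interval subalgebras of arbitrarily large matrix size), cites a known result (9.5 of \cite{Lncltr1}) to obtain a unital monomorphism $h_0\colon C(X)\to e_1Ae_1$, amplifies to $h_1\colon M_r(C(X))\to e'Ae'$ with $e'=\sum e_i$, and sets $e_0=h_1(P)$, $h=h_1|_C$.

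Your route through the Cantor set is a reasonable way to avoid citing that embedding result, but the inductive step is not coherently described. If the embeddings $\iota_n\colon D_n\to f_nAf_n$ are unital and \emph{compatible} in the sense $\iota_{n+1}|_{D_n}=\iota_n$, then $f_{n+1}=\iota_{n+1}(1_D)=\iota_n(1_D)=f_n$; a strictly decreasing sequence is impossible and the trace bound $\inf_\tau\tau(f_n)\ge\delta$ becomes vacuous. What is actually needed is a single projection $f\le e$ together with exactly compatible unital embeddings $\iota_n\colon D_n\to fAf$. When $X$ is connected, $P'$ has constant rank $k$, and since $K$ is zero-dimensional one has $D\cong M_k(C(K))$; the construction then reduces to (a) finding $k$ mutually equivalent orthogonal projections below $e$ and (b) embedding $C(K)$ into one such corner by iterated splitting of projections into two non-zero orthogonal pieces. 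Both (a) and (b) follow directly from $TR\le 1$; your appeals to ``approximate divisibility'' and ``strict comparison of positive elements'' are not the relevant tools here. Once repaired in this way, the argument is essentially the paper's, with $C(K)$ standing in for $C(X)$.
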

\begin{proof}
Without loss of generality, we may assume that $X$ is connected. There are mutually orthogonal   and mutually
equivalent  non-zero projections
$e_1, e_2,...,e_r \le eAe.$ Put $e'=\sum_{i=1}^r e_i.$
It is well known that there exists a unital monomorphism $h_0: C(X)\to e_1Ae_1$ (see 9.5 of \cite{Lncltr1}).
This extends a monomorphism $h_1: M_r(C(X))\to e'Ae'\cong M_r(e_1Ae_1).$
Let $e_0=h_1(P).$ Define $h: C\to e_0Ae_0$ by $h=h_1|_C.$
\end{proof}

The following has been used. 

\begin{lem}\label{Cutdown}
Let $A$ be a unital simple separable \CA\, with $T(A)\not=\emptyset$ and let 
$p\in A$ be a non-zero projection.  Then there is an affine homeomorphism 
$\gamma_1: T(A)\to T(pAp)$ such that, for any extremal point $t\in T(A),$
$$
\gamma_1(t)(a)=t(a)/\tau(p)\rforal a\in pAp.
$$
\end{lem}

\begin{proof}
For each $\tau\in T(A),$ define 
$\gamma(\tau)(a)=\tau(a)/\tau(p)$ for all $a\in pAp.$ It is easy to check  that 
$\gamma: T(A)\to T(pAp)$ is bijective and bi-continuous. 
Let $\partial_e(A)$ and $\partial_e(pAp)$ denote the extremal points of $T(A)$ and $T(pAp),$ respectively. 
Then $\gamma$ maps $\partial_e(T(A))$ onto $\partial_e(pAp).$  To see this, 
let $\tau\in \partial_e(T(A)).$ Suppose that there are $\tau_1, \tau_2\in T(pAp)$ such that
$\gamma(\tau)=\af\tau_1+\bt \tau_2,$ where $0\le \af,\, \bt<1$ and $\af+\bt=1.$ 
Since $pAp$ is a unital hereditary \SCA, one can extend $\tau_1$ and $\tau_2$ to two traces on $A$ (see 5.2.7 of \cite{Ped}).
Note also that such extensions are unique, again, because $pAp$ is a hereditary \SCA.
Denote $s'$ the extension of $\af\tau_1+\bt\tau_2$ this way. 
Define $s(a)=s'(a)/s'(1_A).$ Then $s=\tau.$
On the other hand, define  $t_i(a)=\tau_i(a)/\tau_i(1_A)$ for all $a\in A$ and $i=1,2.$ 
Then
$$
s=s'/s'(1_A)=(\af\tau_1+\bt \tau_2)/s'(1_A)=({\af\tau_1(1_A)\over{s'(1_A)}})t_1+({\bt\tau_2(1_A)\over{s'(1_A)}})t_2.
$$
Note $s'(1_A)=\af\tau_1(1_A)+\bt \tau_2(1_A).$
Since $s=\tau$ is  assumed to be in  $\partial_e(T(A)),$ either 
$$
{\af\tau_1(1_A)\over{s'(1_A)}}=0,\,\,\, \text{or} \,\,\, {\bt\tau_2(1_A)\over{s'(1_A)}}=0,
$$
which forces either $\af=0$ or $\bt=0.$  This proves $\gamma$ maps $\partial_e(T(A))$ to 
$\partial_e(T(pAp)).$ Similar argument shows that $\gamma^{-1}$ maps 
$\partial_e(T(pAp))$ to $\partial_e(T(A)).$ This implies that $\gamma$ is a homeomorphism 
from $\partial_e(T(A))$ onto $\partial_e(T(pAp)).$ 

For each $\tau\in T(A),$ by the general theory of Choquet, there is a (positive) probability Borel measure 
$\mu$ on $\partial_e(T(A))$ such that 
$$
f(\tau)=\int_{\partial_e(T(A))} fd\mu\rforal f\in \Aff(T(A)).
$$
Define $\gamma_1(\tau)$ by 
$$
g(\gamma_1(\tau))=\int_{\partial_e(T(A))} g\circ \gamma_1 d\mu\rforal g\in \Aff(T(pAp).
$$
It is easy to check that $\gamma_1$ is an affine map from $T(A)$ to $T(pAp).$ 
Since $\gamma_1|_{\partial_e(T(A))}$ is a homeomorphism, $\gamma_1$ maps $T(A)$ onto $T(pAp)$ as 
an affine homeomorphism.


\end{proof}

\begin{df}\label{KL+}
{\rm
Let $C$ and $A$ be two unital \CA s. Denote by
$KK_e(C,A)^{++}$ the set of those elements $\kappa\in KK(C,A)$ such that
$$
\kappa([1_C])=[1_A]\andeqn \kappa(K_0(C)_+\setminus \{0\})
\subset K_0(A)_+\setminus \{0\}.
$$
Denote by $KL_e(C,A)^{++}$ the set of those elements $\kappa\in
KL(C,A)$ such that
$\kappa([1_C])=[1_A]$ and $\kappa(K_0(C)\setminus \{0\})\subset
K_0(A)_+\setminus \{0\}.$

Now suppose that $T_{\rm f}(C)\not=\emptyset$ and $A$ is a unital simple \CA\, with $T(A)\not=\emptyset.$ Let $\gamma: T(A)\to T_{\rm f}(C)$ be a continuous affine map.
We say $\kappa$ and $\gamma$ are compatible, if,
$\tau\circ \kappa([p])=\gamma(\tau)([p])$ for every projection
$p\in M_{\infty}(C).$ Let $\af: U(M_{\infty}(C))/CU(M_{\infty}(C))\to U(A)/CU(A)$ be a continuous \hm. By (\ref{HS-4}), there is a \hm\, $\af_0:
\Aff(C)/\overline{\rho_C(K_0(C))}\to
\Aff(A)/\overline{\rho_A(K_0(A))}$ induced by $\af$ and there is \hm\, $\af_1: K_1(C)\to K_1(A)$  induced by $\af.$  We say $\af$ and $\kappa$ compatible if $\kappa|_{K_1(C)}=\af_1,$ we say
$\kappa,$ $\gamma$ and $\af$ are compatible if $\kappa$ and $\gamma$ are compatible, $\kappa$ and $\af$ compatible and
the \hm\, induced by $\gamma$ is equal to $\af_0.$
}

\end{df}

\begin{lem}\label{Ran1}
Let $X$ be a finite CW complex, let $n\ge 1$ be an integer,
 let $C=PM_n(C(X))P,$ where $P\in M_n(C(X))$ is a projection, and let $A$ be a unital infinite dimensional separable simple \CA\, of tracial rank at most one. Suppose that $\kappa\in KK_e(C, A)^{++}$ and  $\gamma: T(A)\to T_{{\rm f}}(C)$ is a continuous affine map so that $\kappa$ and $\gamma$ are  compatible.
Let $\sigma>0$ and ${\cal H}\subset C_{s.a.}$ be a finite subset. Then there is a unital monomorphsm $h: C\to A$ such that
\beq
[h]=\kappa\andeqn\\
|\tau\circ h(c)-\gamma(\tau)(c)|<\sigma
\eneq
for all $c\in {\cal H}$ and all $\tau\in T(A).$

\end{lem}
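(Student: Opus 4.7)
The plan is to construct $h$ as a direct sum $h = h_0 \oplus h_1$ into complementary corners of $A$, where $h_0$ lies in a large corner $pAp$ and carries essentially all of the trace approximation, while $h_1$ lies in a small corner $(1-p)A(1-p)$ and supplies the finer $KK$-theoretic content of $\kappa$ not visible to point evaluations.

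First I would choose $\ep_0 > 0$ small compared to $\sigma/(1+\max_{c\in {\cal H}}\|c\|)$ and, using the hypothesis $TR(A)\le 1$, produce a projection $p \in A$ with $\tau(1-p)<\ep_0$ uniformly in $\tau \in T(A)$ and a unital \SCA\, $B=\bigoplus_{j=1}^N M_{r(j)}(C(X_j))$ (with $X_j$ a point or $[0,1]$ and $1_B=p$) approximating $pAp$ to high precision on a designated finite subset. I cover $X$ by open balls of radius $\eta$ small enough that every $c\in{\cal H}$ varies by less than $\sigma/4$ across each ball, with centers $x_1,\ldots,x_N$, and use $\gamma$ and a subordinate partition of unity $\{\chi_i\}$ to extract weights $\alpha_i(\tau)=\gamma(\tau)(\chi_i)$. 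Compatibility of $\kappa$ with $\gamma$ on projections of $C$, together with the divisibility afforded by the building blocks $M_{r(j)}(C(X_j))$ once the $r(j)$ are made large (available by \ref{Dig1}-style cutting), lets me realize these weights up to small error by mutually orthogonal projections $q_1,\ldots,q_N\in B$ of the appropriate fibre rank, and to define $h_0:C\to pAp$ as a direct sum of fibrewise embeddings $P(x_i)M_n(\C)P(x_i)\hookrightarrow q_i Bq_i$ composed with evaluation at $x_i$. A Riemann-sum argument then yields $|\tau\circ h_0(c)-\gamma(\tau)(c)|<\sigma/2$ for all $c\in{\cal H}$ and $\tau\in T(A)$, but $[h_0]$ typically captures only the evaluation part of $\kappa$ on $K_0(C)$ and no more.

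For $h_1$, the small corner $(1-p)A(1-p)$ is again a unital simple \CA\, with $TR\le 1$, so by \ref{RanL1} there exists a unital monomorphism of $C$ into some subcorner of $(1-p)A(1-p)$. I would then modify this map --- for instance by absorbing additional building-block pieces realized in disjoint subcorners --- so that $[h_0\oplus h_1]=\kappa$ in $KK(C,A)$. Since $\tau(1-p)<\ep_0$, the trace contribution of $h_1$ is automatically at most $\sigma/4$ on ${\cal H}$, so the trace estimate for $h_0$ is preserved up to $\sigma$. The main obstacle is this final $K$-theory adjustment: realizing a prescribed compatible element of $KK(C,A)$ by a monomorphism into a specified small corner. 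This rests on the UCT description of $KK(C,A)$ (with $C$ a corner of $M_n(C(X))$ for $X$ a finite CW complex, so $K_*(C)$ is finitely generated), on existence results for realizing $K$-theory classes with controlled trace pairing by monomorphisms into unital simple \CA s of tracial rank at most one, and on iterated application of \ref{RanL1} in ever smaller subcorners to handle $\Ker \rho_C$, $K_1(C)$, and torsion contributions in turn. Once $h$ is assembled, unitality is automatic, injectivity follows from injectivity of $h_1$ together with simplicity of $A$, the identity $[h]=\kappa$ holds by the $K$-theory matching, and the approximate trace identity $|\tau\circ h(c) - \gamma(\tau)(c)|<\sigma$ holds on ${\cal H}$.
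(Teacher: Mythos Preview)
Your overall architecture---split $h$ as a direct sum into a large corner carrying the trace and a small corner carrying the residual $KK$-class---matches the paper's. The gap is precisely where you flag it: the ``final $K$-theory adjustment.'' You start from an arbitrary monomorphism produced by Lemma~\ref{RanL1} and propose to ``modify'' it so that $[h_0\oplus h_1]=\kappa$. But Lemma~\ref{RanL1} gives no control over the $KK$-class, and ``absorbing additional building-block pieces'' does not explain how to hit a prescribed element of $KK_e(C,(1-p)A(1-p))^{++}$. Two concrete issues are left open: first, you must verify that $\beta:=\kappa-[h_0]$ actually lies in $KK_e(C,(1-p)A(1-p))^{++}$, i.e., that it sends every nonzero class in $K_0(C)_+$ to a strictly positive class realizable in the small corner (this needs compatibility of $\kappa$ and $\gamma$, weak unperforation of $K_0(A)$, and a careful choice of the ranks in your $h_0$); second, and more seriously, you must produce a unital monomorphism realizing $\beta$. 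The vague appeal to ``existence results for realizing $K$-theory classes\ldots by monomorphisms'' is exactly the nontrivial content you need to supply.

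The paper sidesteps both issues by a detour through a tracial-rank-zero model. It chooses an auxiliary unital simple amenable $B$ with $TR(B)=0$, UCT, and the same ordered $K$-theory as $A$, fixes an invertible $[\imath]\in KK_e(B,A)^{++}$, and factors $\kappa=\kappa_0\times[\imath]$. Theorem~6.2 of \cite{Lninv} (an existence/decomposition result in the $TR=0$ setting) then produces genuine unital homomorphisms $h_1:C\to p_0Bp_0$ and $h_2:C\to B_0$ with $B_0$ finite-dimensional, $\tau(p_0)$ small, and $[h_1+h_2]=\kappa_0$. This is the step that replaces your hand-waved modification: the splitting of $\kappa$ into ``small hard part plus finite-dimensional part'' is obtained as actual homomorphisms, not as an abstract $KK$-difference. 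The finite-dimensional piece $[h_2]$ is then realized in the large corner $(1-e_0)A(1-e_0)$ with the required trace approximation by Lemma~9.5 of \cite{Lncltr1}, and the hard piece $h_1$ is transported into the small corner $e_0Ae_0$ via a monomorphism $j:p_0Bp_0\to e_0Ae_0$ with $[j]=[\imath]$ supplied by Theorem~5.4 of \cite{Lninv}. Your direct construction of $h_0$ by weighted point evaluations is conceptually equivalent to the paper's use of $h_3$ realizing $[h_2]$; what you are missing is a substitute for the combination of 6.2 and 5.4 of \cite{Lninv}.
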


\begin{proof}
To simplify the proof, without loss of generality, we may assume that
$X$ is connected.   We may also assume that $0<\sigma<1/2$ and ${\cal H}$ is in the unit ball of $C.$  There is a unital separable amenable simple \CA\, $B$ with $TR(B)=0$ which satisfies the UCT such that
$$
(K_0(B), K_0(B)_+, [1_B], K_1(B))=(K_0(A), K_0(A)_+, [1_A], K_1(A)).
$$
Let $[\imath]\in KK_e(B,A)^{++}$ be an invertible element which gives the above identity. Therefore there is $\kappa_0\in KK_e(C, B)^{++}$ such that
$$
\kappa=\kappa_0\times [\imath].
$$

Without loss of generality, we may assume that
${\cal H}$ is in the unit ball of $C.$

Let $p\in B$ with $\tau(p)<\sigma/8$ for all $\tau\in T(B).$
It follows from 6.2 of \cite{Lninv} that there is a nonzero projection
$p_0\le p,$ a finite dimensional \SCA\, $B_0\subset (1-p_0)B(1-p_0)$ with
$1_{B_0}=1-p_0,$ a unital \hm\, $h_1: C\to p_0Bp_0$ and a unital \hm\,
$h_2: C\to B_0$ such that
$$
[h_1+h_2]=\kappa_0.
$$
Let $e_0\in A$ be a projection such that $[e_0]=[\imath]([p_0]).$
Put $D=(1-e_0)A(1-e_0).$  Then $D$ is a unital simple \CA\, with $TR(D)\le 1.$
Let $\gamma_1^{-1}: T((1-e_0)A(1-e_0))\to T(A)$ be the affine homeomorphism given by \ref{Cutdown} such that 
$\gamma_1(\tau)(a)=\tau(a)/\tau(1-e_0)$ for all $a\in (1-e_0)A(1-e_0)$ and $\tau\in \partial_e(T(A)).$ 
Note that, for any $\tau\in T(A)$ and $f\in \Aff(T(A))$ with $\|f\|\le 1,$
\beq\label{2015-8/2}
\hspace{-0.2in}|f(\tau)-f(\gamma_1(\tau))|\le \int_{\partial_e(T(A))} |f-f\circ \gamma|d \mu <\sup\{t(e_0)/(1-t(e_0)): t\in T(A)\}<\sigma/7,
\eneq
where $f(\tau)=\int_{\partial_e(T(A))} fd\mu$ and where 
$\mu$ is a probability measure on $\partial_e(T(A)).$ 
Define $\gamma_1': T(D)\to T_{\rm f}(C)$ by $\gamma_1'=\gamma\circ \gamma_1^{-1}.$
It  follows from Lemma 9.5 of \cite{Lncltr1} that there exists a unital monomorphism $h_3: C\to D$ such that
\beq\label{ran1-1}
[h_3]=[h_2]\,\,\,{\rm in}\,\,\, KK(C, A).
\eneq
\beq\label{ran1-2}
|t(h_3(c))-\gamma_1'(t)(c)|<\sigma/8
\eneq
for all $c\in {\cal H}$ and for $t\in T(D).$
It follows from Theorem 5.4 of \cite{Lninv} that there is a unital monomorphism
$j:p_0Bp_0\to e_0Ae_0$
such that $[j]=[\imath].$

 Now define
$h: C\to A$ by
$h(c)=j\circ h_1(c)\oplus h_3(c)$ for all $c\in C.$ One computes that
$$
[h]=[\kappa]\andeqn
|\tau(h(c))-\gamma(\tau)(c)|<\sigma
$$
for all $c\in {\cal H}$ and for all $\tau\in T(A).$

\end{proof}

\begin{lem}\label{Ran2}
Let $C$ be as in \ref{Ran1} and let $A$ be a unital infinite dimensional separable simple \CA\, with $TR(A)\le 1.$  Suppose that  $\kappa\in KK_e(C,A)^{++},$
$\gamma: T(A)\to T_{\rm f}(C)$ is a continuous affine map and $\af:
U(M_{\infty}(C))/CU(M_{\infty}(C))\to U(A)/CU(A)$ such that
$\kappa, \gamma$ and $\af$ are compatible. Then, for any  $\sigma_1>0,$ $1>\sigma_2>0,$ any finite subset  ${\cal H}\subset C_{s.a.}$ and any finite subset ${\cal U}\subset U(M_N(C))$ (for some
integer $N\ge 1$),  there exists a unital monomorphism  $h:C\to A$ such that
\beq\label{ran2-1}
[h]=\kappa,\,\,\, |\tau\circ h(c)-\gamma(\tau)(c)|<\sigma_1\tforal \tau\in T(A)\\
\andeqn {\rm dist}(h^{\ddag}({\bar u}), \af({\bar u}))<\sigma_2\tforal u\in {\cal U}.
\eneq
\end{lem}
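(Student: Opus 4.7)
The strategy is to first use Lemma \ref{Ran1} to realize the prescribed $KK$-class and tracial data by some unital monomorphism $h_0:C\to A$, and then to repair the discrepancy between $h_0^{\ddag}$ and $\alpha$ on $\cal U$ by absorbing, as a direct summand into a small corner, an auxiliary unital monomorphism whose de la Harpe--Skandalis invariant can be prescribed.

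\textbf{Step 1 (set--up and discrepancy).} Choose a nonzero projection $p_0\in A$ with $\tau(p_0)<\sigma_1/(8\sup_{c\in{\cal H}}\|c\|)$ for all $\tau\in T(A)$. Using that $TR(A)\le 1$ and that $(K_0(A),K_0(A)_+,[1_A])$ is weakly unperforated, decompose $\kappa=\kappa_a+\kappa_b$ with $\kappa_a\in KK_e(C,(1-p_0)A(1-p_0))^{++}$ and $\kappa_b\in KK_e(C,p_0Ap_0)^{++}$ (the latter realized by an honest point-evaluation-type piece on $K_*$), and correspondingly split $\gamma=\gamma_a+\gamma_b$ via $\gamma_a(\tau)=(1-\tau(p_0))\gamma(\tau)$, $\gamma_b(\tau)=\tau(p_0)\gamma(\tau)$ on the respective corners (compatibility of the pieces with the $K_0$-pairing is immediate from the decomposition of $\kappa$).

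\textbf{Step 2 (main part).} Apply Lemma \ref{Ran1} inside $(1-p_0)A(1-p_0)$ to obtain a unital monomorphism $h_a:C\to (1-p_0)A(1-p_0)$ with $[h_a]=\kappa_a$ and $|\tau h_a(c)-\gamma_a(\tau)(c)|<\sigma_1/4$ for all $c\in{\cal H}$ and $\tau\in T((1-p_0)A(1-p_0))$. For any unital monomorphism $h_b:C\to p_0Ap_0$ realizing $\kappa_b$ and $\gamma_b$ (again produced via Lemma \ref{Ran1}), the map $h:=h_a\oplus h_b$ satisfies $[h]=\kappa$ and the tracial estimate on $\cal H$. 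Moreover, since $\kappa,\gamma,\alpha$ are compatible and $[h_a\oplus h_b]=\kappa$, for every $u\in{\cal U}$ the element $\alpha(\bar u)\cdot h^{\ddag}(\bar u)^{-1}$ has trivial $K_1$--image and hence lies in $\Aff(T(A))/\overline{\rho_A(K_0(A))}$ under the splitting fixed in \ref{Harp-Scand}.

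\textbf{Step 3 (correction of the determinant).} The remaining task is to choose $h_b$ so that, in addition, $\mathrm{dist}\bigl(h^{\ddag}(\bar u),\alpha(\bar u)\bigr)<\sigma_2$ for all $u\in{\cal U}$. Having selected a first candidate $h_b^{(0)}$ as above, let $\xi_u\in\Aff(T(A))/\overline{\rho_A(K_0(A))}$ be the residual defect. By the de la Harpe--Skandalis sequence (\ref{HS-4}) applied to $p_0Ap_0$, every element of $\Aff(T(p_0Ap_0))/\overline{\rho(K_0(p_0Ap_0))}$ is realized by a unitary in $U_0(M_k(p_0Ap_0))$ for some $k$; since these affine spaces scale by $\tau(p_0)$, the restriction $\xi_u|_{p_0Ap_0}$ is realizable there provided $\tau(p_0)$ is not too small relative to $\|\xi_u\|$ --- and we may shrink $\|\xi_u\|$ below $\sigma_2/2$ in advance by first choosing the preliminary $h_b^{(0)}$ to approximate the target tracial value of each $\log(\alpha(\bar u)h_a^{\ddag}(\bar u)^{-1})$ on $p_0Ap_0$ (using the finite-dimensional flexibility inside $p_0Ap_0$ and 6.2 of \cite{Lninv} as in the proof of \ref{Ran1}). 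Conjugating $h_b^{(0)}$ by an appropriate unitary $w\in p_0Ap_0$ representing the defect then produces the required $h_b$; since inner conjugation inside a corner does not affect $[h_b]$ or $\gamma_b$, the $K$-theoretic and tracial conditions are preserved, while $h_b^{\ddag}(\bar u)$ is altered precisely by the class of $w^{-1} h_b^{(0)}(u)^{-1} w\, h_b^{(0)}(u)$, whose logarithmic trace realizes $\xi_u$.

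\textbf{Main obstacle.} The delicate step is Step~3: producing a unital monomorphism into the small corner $p_0Ap_0$ whose $\ddag$-invariant on the finite set $\cal U$ matches a prescribed element of $\Aff(T(A))/\overline{\rho_A(K_0(A))}$ within tolerance $\sigma_2$, while still satisfying the $KK$- and trace-constraints. The argument requires balancing the size of $p_0$: large enough that $p_0Ap_0$ can absorb the prescribed determinant values at scale $\sigma_2$, yet small enough that the tracial perturbation in Step~1 stays below $\sigma_1$. Once this balance is achieved, the corrected monomorphism $h=h_a\oplus h_b$ fulfills all three conclusions of the lemma.
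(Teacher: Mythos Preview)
Your Step~3 does not work: conjugation by a unitary leaves the $\ddag$-invariant unchanged. If $h_b=\mathrm{Ad}\,w\circ h_b^{(0)}$, then for each $u\in{\cal U}$ the element
\[
h_b(u)\,h_b^{(0)}(u)^{-1}=w^{-1}h_b^{(0)}(u)\,w\,h_b^{(0)}(u)^{-1}=[w^{-1},\,h_b^{(0)}(u)]
\]
is a commutator, hence lies in $CU(M_N(p_0Ap_0))$, and its image under $\bar\Delta$ in $\Aff(T(A))/\overline{\rho_A(K_0(A))}$ is zero. So the ``alteration'' you describe is always trivial and cannot realize a prescribed $\xi_u$. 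More generally, Lemma~\ref{Ran1} gives you no handle on $h_b^{\ddag}$, and asking for a monomorphism $C\to p_0Ap_0$ with prescribed $\ddag$ on ${\cal U}$ is precisely the content of the lemma you are trying to prove (for the corner), so the argument is circular.

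The paper's proof avoids this by a genuinely different mechanism. It introduces an auxiliary simple \CA\ $A_1$ with the same $K_0$-data as $A$ but with $K_1(A_1)=G_1\oplus\mathrm{Tor}(K_1(A))$, where $G_1$ is the free part of $K_1(C)$. One builds, via Lemma~\ref{Ran1}, a monomorphism $\phi_1:C\to A_2=(1-e_0)A_1(1-e_0)$ whose induced map on $K_1$ sends $G_1$ identically into $G_1\subset K_1(A_1)$. The determinant defect is then absorbed not by tinkering with a map out of $C$, but by invoking Theorem~8.6 of \cite{Lninv}, which produces a unital homomorphism $h_1:A_2\to(1-e)A(1-e)$ between \emph{simple} \CA s of tracial rank at most one with a \emph{prescribed} $\ddag$-component $\chi$ on $U_c(K_1(A_2))$. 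Because $\phi_1$ carries $G_1$ isomorphically into $K_1(A_2)$, prescribing $h_1^{\ddag}$ on $U_c(K_1(A_2))$ corrects $\phi_1^{\ddag}$ on $U_c(G_1)$ exactly. The small corner $eAe$ carries a second piece $\phi_0$ that fixes up the $KK$-class. This factorization through a classifiable simple intermediate algebra is the missing idea.
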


\begin{proof}
To simplify the notation, without loss of generality, we may assume that
$X$ is connected.
Furthermore, a standard argument shows that, we can further reduce the general case to the case that $C=C(X).$

We write
$K_1(C)=G_1\oplus Tor(K_1(C)),$ where $Tor(K_1(C))$ is the torsion subgroup of $K_1(C)$ and
$G_1$ is the free part.
Fix a point $\xi_0\in X,$ define
$$
C_0=\{f\in C: f(\xi_0)=0\}.
$$
Then $C_0\subset C$ is an ideal of $C$ and $C/C_0=\C.$
We write
$$
K_0(C)=\Z[1_C]\oplus K_0(C_0).
$$

Let $A_1$ be a unital separable amenable simple \CA\, with UCT and with $TR(A_1)=TR(A)\le 1$ such that
\beq\label{ran2-2}
&&\hspace{-0.6in}(K_0(A_1), K_0(A_1)_+, [1_{A_1}], T(A_1), \rho_{A_1})=(K_0(A), K_0(A)_+, [1_{A}], T(A), \rho_{A})\\\label{ran2-3}
&&\andeqn K_1(A_1)=G_1\oplus Tor(K_1(A)).
\eneq
To simplify notation, we may assume that ${\cal U}={\cal U}_0\cup {\cal U}_1,$
where ${\cal U}_0\subset U_0(M_N(C))$ and ${\cal U}_1\subset U_c(M_N(C))$ are finite subsets and $N\ge 1$ is an integer.
For each $u\in {\cal U}_0,$ write
$u=\prod_{i=1}^{n(u)} \exp(\sqrt{-1} a_i(u)),$ where $a_i(u)\in M_N(C)$ is a selfadjoint elements.
Write
$$
a_i(u)=(a_i^{(k,j)}(u))_{N\times N},\,\,i=1,2,...,n(u).
$$
Write
$$
c_{i,k,j}(u)={a_i^{(k,j)}+(a_i^{(k,j)})^*\over{2}}\andeqn
d_{i,k,j}(u)={a_i^{(k,j)}-(a_i^{(k,j)})^*\over{2i}}.
$$

Put
$$
M={\rm max} \{\|c\|,  \|c_{i,k,j}(u)\|, \|d_{i,k,j}(u)\|: c\in {\cal H}, u\in {\cal U}_0\}.
$$

Choose a non-zero projection $e\in A$  such that
$$
\tau(e)<{\sigma_1\over{8N^2(M+1)\max\{n(u): u\in {\cal U}_0\}}} \,\tforal \tau\in T(A).
$$
Let $e_0\in A_1$ be a projection such that $[e_0]=[e]$ using (\ref{ran2-2})  and let $A_2=(1-e_0)A_1(1-e_0).$
In what follows, we use the identification (\ref{ran2-2}).
Define
$\theta_1\in Hom(K_i(C), K_i(A_2))$
as follows:
On $K_0(C),$
define $\theta_1(m[1_C])=m[1-e_0]$ for all $m\in \Z,$
$\theta_1|_{K_0(C_0)}=\kappa|_{K_0(C_0)},$ on $K_1(C),$ define
\beq\label{ran2-5}
 \theta_1|_{Tor(K_1(C))}=\kappa|_{Tor(K_1(C))},\,\,\,
\theta_1|_{G_1}={\rm id}|_{G_1}.
\eneq
By the Universal Coefficient Theorem, there exists an element $\theta_1\in KL(C, A_2)$
which gives the above \hm s.
Let $\theta_2\in KL(A_1, A)$ which gives the identification (\ref{ran2-2}) and
$\theta_2|_{Tor(K_1(A_1))}={\rm id}|_{Tor(K_1(A_1))}$ and $\theta_2|_{G_1}=\kappa|_{G_1}.$
Let $\beta=\kappa-\theta_2\circ \theta_1.$ We compute that
\beq\label{ran2-9}
\beta([1_C])=[e],\,\,\,\beta|_{K_0(C_0)}=0,\\
\andeqn\beta|_{K_1(C)}=0.
\eneq
Thus $\beta\in KK_e(C, eAe)^{++}.$
It follows from \ref{Ran1} that there is a unital monomorphism $\phi_0: C\to eAe$ such that
$[\phi_0]=\beta.$

Choose
$$
{\cal H}_1={\cal H}\cup\{c_{i,k,j}(u), d_{i,k,j}(u): 1\le k, j\le N,1\le i\le n(u), u\in {\cal U}_0\}.
$$
It follows from \ref{Ran1} that there exists a unital monomorphism
$\phi_1: C\to A_2$ such that
\beq\label{Ran2-12}
[\phi_1]=\theta_1\andeqn \\
|\tau\circ \phi_1(f)-\gamma(\tau)(f)|<{\sigma_1\over{8N^2}}
\eneq
for all $f\in {\cal H}_1$ for all $\tau\in T(A).$
Note that, for $u\in {\cal U}_0,$
\beq\label{Ran2-13}
\Delta(u)=\overline{\sum_{i=1}^{n(u)}\widehat{a_j(u)}},
\eneq
where $\hat{a}(\tau)=\tau(a)$ for all $a\in A_{s.a.}.$
Since $\af$ and $\gamma$ are compatible, we then compute  that
\beq\label{ran2-14}
{\rm dist}(\phi_1^{\ddag}({\overline{u}}), \af({\overline{u}}))<\sigma_2/8
\eneq
for all $u\in {\cal U}_0.$
Denote by $U_c(G_1)$ the image of $G_1.$
Define $\chi: U_c(G_1)\to {\rm Aff}(T(A))/\overline{\rho_A(K_0(A))}$ by
\beq\label{Ran2-15}
\chi=\af|_{U_c(G_1))}-\phi_0^{\ddag}|_{U_c(G_1)}
-\phi_1^{\ddag}|_{U_c(G_1)}.
\eneq
Note that $U_c(G_1)\cong G_1.$ We identify $U_c(G_1)$ with the corresponding part in $U_c(K_1(A_2)).$
By defining $\chi$ on
$Tor(K_1(A_2))$ to be zero, we obtain a \hm\,
$\chi: U_c(K_1(A_2))\to {\rm Aff}(T(A))/\overline{\rho_A(K_0(A))}.$
It follows from Theorem 8.6 of \cite{Lninv}  that there exists a unital \hm\,
$h_1: A_2\to (1-e)A(1-e)$ such that
\beq\label{Ran2-16}
[h_1]=\theta_2,\,\,\, (h_1)_{\sharp}={\rm id}_{T(A)}\\
\andeqn
h_1^{\ddag}|_{U_c(A_2)}=\chi+\theta_2|_{K_1(A_2)},
\eneq
where we identify $K_1(A_2)$ with $U_c(A_2)=U_c(G_1)\oplus Tor(K_1(A))$ and
$U_c(K_1(A))$ with $K_1(A).$  We also identify ${\Aff(T(A_2))/\overline{\rho_A(K_0(A_2))}}$
with ${\Aff(T(A))/\overline{\rho_A(K_0(A))}}.$
Note that (by (\ref{Ran2-16}),
$$
h_1^{\ddag}|_{\Aff(T(A))/\overline{\rho_A(K_0(A))}}={\rm id}_{\Aff(T(A))/\overline{\rho_A(K_0(A))}}.
$$
Now define
\beq\label{Ran2-17}
h(f)=\phi_0(f)\oplus h_1\circ \phi_1(f)\tforal f\in C.
\eneq
It follows that
\beq\label{Ran2-18}
[h]&=&\kappa, \\
|\tau\circ h(f)-\gamma(\tau)(f)|&<&\sigma_1\tforal f\in {\cal H}\andeqn\\
{\rm dist}(h^{\ddag}(\bar{u}),\af(\bar{u}))&<&\sigma_2\tforal u\in {\cal U}.
\eneq

\end{proof}

\begin{lem}\label{Ran3}
Let $X$ be a compact subset of a finite CW complex $Y. $
Then there exists a sequence of finite CW complex $Y_n\supset Y_{n+1}$ each of which is
a compact subset of $Y$  and there exists a \morp\, $\phi_n: C(X)\to C(Y_n)$ such that
\beq\label{ran3-1}
\pi_n\circ \phi_n={\rm id}_{C(X)},\,\,\,n=1,2,...\andeqn\\
\lim_{n\to\infty}\|\phi_n(f)\phi_n(g)-\phi_n(fg)\|=0
\eneq
for all $f, g\in C(X),$ where $\pi_n: C(Y_n)\to C(X)$ is the quotient map.
\end{lem}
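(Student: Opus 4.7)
The plan is to realize the $\phi_n$ as restrictions of a single Dugundji-type extension operator $\Phi\colon C(X)\to C(Y)$ to an appropriate shrinking sequence of finite-CW-complex neighborhoods of $X$ in $Y$. The Dugundji construction is a positive linear contraction that sections the restriction map, so $\pi_n\circ\phi_n=\id_{C(X)}$ will hold on the nose, and the asymptotic multiplicativity will come from the fact that over points increasingly close to $X$ the extension is a convex combination of evaluations at points of $X$ which themselves lie increasingly close to each other.

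First I would fix a metric on $Y$ and a triangulation $T$ of $Y$. For each $n$ choose a simplicial refinement $T_n$ of $T$ of mesh less than $1/n$ (an iterated barycentric subdivision suffices) and let $Y_n$ be the union of all closed simplices of $T_n$ that meet the closed $\ep_n$-neighborhood of $X$, where $\ep_n>0$ is chosen so small that $Y_n$ is contained in the $1/n$-neighborhood of $X$. Each $Y_n$ is then a finite subcomplex of $T_n$, hence a finite CW complex, and a compact subset of $Y$ containing $X$; by refining further at each stage and letting $\ep_n\downarrow 0$ quickly enough, one arranges $Y_{n+1}\subset Y_n$ for every $n$. To construct $\Phi$, take a locally finite open cover $\{U_i\}$ of $Y\setminus X$ by balls $U_i=O(y_i,d(y_i,X)/2)$, a subordinate partition of unity $\{\lambda_i\}\subset C_c(Y\setminus X)$, and for each $i$ a point $x_i\in X$ with $d(y_i,x_i)<2\,d(y_i,X)$, and put
$$
\Phi(f)(y)=\begin{cases} f(y), & y\in X,\\ \sum_i\lambda_i(y)f(x_i), & y\in Y\setminus X.\end{cases}
$$
By Dugundji's extension theorem this gives $\Phi(f)\in C(Y)$ with $\|\Phi(f)\|=\|f\|$, so $\phi_n(f):=\Phi(f)|_{Y_n}$ defines a \morp\ $C(X)\to C(Y_n)$ with $\pi_n\circ\phi_n=\id_{C(X)}$ by construction.

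The only remaining point, and in any case a routine estimate, is the asymptotic multiplicativity. If $y\in Y_n$ and $\lambda_i(y)\lambda_j(y)\neq 0$, then $d(y,y_i)<d(y_i,X)/2$, which forces $d(y_i,X)<2d(y,X)$ and hence $d(x_i,y)\le d(x_i,y_i)+d(y_i,y)< 4d(y,X)+d(y,X)$; combining the analogous bound for $x_j$ gives $d(x_i,x_j)<10/n$ for all such pairs. Using $\sum_i\lambda_i(y)=1$, one computes
$$
\phi_n(f)(y)\phi_n(g)(y)-\phi_n(fg)(y)=\sum_{i,j}\lambda_i(y)\lambda_j(y)f(x_i)\bigl[g(x_j)-g(x_i)\bigr],
$$
whose norm is at most $\|f\|$ times the oscillation of $g$ on $X$ over sets of diameter $10/n$. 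The uniform continuity of $g$ on the compact set $X$ forces this to tend to $0$, giving $\|\phi_n(fg)-\phi_n(f)\phi_n(g)\|\to 0$ for all $f,g\in C(X)$, which completes the proof.
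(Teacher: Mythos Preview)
Your proof is correct and follows the same underlying strategy as the paper: produce a single c.p.c.\ section of the restriction map and then restrict it to a shrinking sequence of neighborhoods $Y_n$ of $X$; the multiplicativity defect vanishes on $X$ and is continuous, hence uniformly small near $X$. The paper obtains its section abstractly from the Choi--Effros theorem and argues asymptotic multiplicativity by uniform continuity of the defect function $\psi_1(fg)-\psi_1(f)\psi_1(g)$, choosing at each stage a small enough neighborhood for a given finite test set. You instead use the explicit Dugundji extension operator, which is more elementary (no nuclearity or operator-algebraic lifting machinery is invoked) and yields a direct quantitative bound: the defect at $y$ is controlled by $\|f\|$ times the oscillation of $g$ over sets of diameter $\approx d(y,X)$. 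This buys you a cleaner estimate that works for all $f,g$ simultaneously, without the inductive bookkeeping in the paper's argument.

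One small caveat: your construction of the $Y_n$ presupposes a triangulation of $Y$, which is not automatic for an arbitrary finite CW complex. The paper's corresponding step (asserting that a finite union of closed metric balls in $Y$ is homeomorphic to a finite CW complex) is at least as informal on this point, so neither argument is fully rigorous here without an additional remark; for the applications in the paper one may simply take $Y$ to be a finite simplicial complex from the outset.
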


\begin{proof}
Let $d_n\searrow 0$ be a decreasing sequence of positive numbers.  There are finitely many open balls of $Y$ with center in $X$ and radius $d_n$
covers $X.$  Let $Z_n$ be the union of  closure of these balls. Then $Z_n$ is a compact subset
of $Y$ which is homeomorphic to a finite CW complex. We may assume that $Z_n\supset Z_{n+1}.$
Then (by, for example, The Effros-Choi Theorem),  there exists, for each $n,$ a
\morp\, $\psi_n: C(X)\to C(Z_n)$ such that
\beq\label{ran3-2}
\pi_n\circ \psi_n={\rm id}_{C(X)},
\eneq
where $\pi_n(f)=f|_X$ for $f\in C(Z_n),$ $n=1,2,....$

Let $\{{\cal F}_m\}\subset C(X)$ be a sequence  of increasing finite subsets of  the unit ball of $C(X)$ so that its union
is dense in the unit ball of $C(X).$ Choose $Y_1=Z_1$ and $\phi_1=\psi_1.$
Let ${\cal G}_1={\cal F}_1\cup \{fg: f, g\in {\cal F}_1\}.$
Choose $d_{n_2}$ such that
 \beq\label{ran3-3}
 |\psi_1(f)(x)-\psi_1(f)(x')|<1/4\,\tforal f\in {\cal G}_1,
 \eneq
provided that ${\rm dist}(x, x')<d_{n_2}$ for all $x, x'\in Z_1.$
By (\ref{ran3-2}),
\beq\label{ran3-4}
\psi_1(fg)(x)-\psi_1(f)(x)\psi_1(g)(x)=0
\eneq
for all $x\in X.$  Now for any $z\in X_{n_2},$ there exists $x\in X$ such that
${\rm dist}(x, z)<d_{n_1}.$ Therefore, by (\ref{ran3-4}) and (\ref{ran3-3}),
\beq\label{ran3-5}
&&\hspace{-1.5in}|\psi_1(fg)(z)-\psi_1(f)(z)\psi_1(g)(z)| \le
|\psi_1(fg)(z)-\psi_1(fg)(x)|\\
&&\hspace{-0.4in}+|\psi_1(fg)(x)-\psi_1(f)(x)\psi_1(g)(x)|\\
&&+|\psi_1(f)(x)\psi_1(g)(x)-\psi_1(f)(z)\psi_1(g)(z)|<3/4
\eneq
for all $f, g\in {\cal F}_1.$ Choose $Y_2=Z_{n_2}.$ Define $h_1: C(Z_1)\to C(Z_{n_2})$ defined by $h_1(f)=f|_{Z_{n_2}}$ for all $f\in C(Z_1).$ Define
$\phi_2: C(X)\to C(Y_2)$ by defining
$$
\phi_2(f)=h_1\circ \psi_1.
$$
Thus, by (\ref{ran3-5}),
\beq\label{ran3-6}
\|\phi_2(fg)-\phi_2(f)\phi_2(g)\|<3/4
\eneq
for all $f, g\in {\cal F}_1.$
Note that
\beq\label{ran3-7-1}
\pi_{n_2}\circ \phi_2={\rm id}_{C(X)}.
\eneq

Let ${\cal G}_2={\cal G}_1\cup {\cal F}_2\cup \{fg: f,g \in {\cal F}_2\}.$
Choose $d_{n_3}$ such that
 \beq\label{ran3-7}
 |\phi_2(f)(x)-\phi_2(f)(x')|<1/4^2\,\tforal f\in {\cal G}_2,
 \eneq
provided that ${\rm dist}(x, x')
<d_{n_3}$ for all $x, x'\in Y_2.$
By (\ref{ran3-2}), for any $x\in X,$
\beq\label{ran3-8}
\phi_2(fg)(x)=\phi_2(f)(x)\phi_2(g)(x)
\eneq
Then, for any $z\in Z_{n_3},$ there exists $x\in X$ such that ${\rm dist}(x, z)<d_{n_3}. $ Thus, by (\ref{ran3-8}) and (\ref{ran3-7}),
\beq\label{ran3-9}
|\phi_2(fg)(z)-\phi_2(f)(z)\phi_(g)(z)|<3/4^2\tforal z\in Z_{n_3}.
\eneq
Let $h_2: C(Y_2)\to C(Z_{n_3})$ be defined by $h_2(f)=f|_{Z_{n_3}}$ for all $f\in C(Y_2).$ Put $Y_3=Z_{n_3}.$ Define $\phi_3: C(X)\to C(Y_{n_3})$ by
$\phi_3(f)=h_2\circ \phi_2.$ Then
\beq\label{ran3-10}
\pi_{n_3}\circ \phi_3={\rm id}_{C(X)}.
\eneq
By (\ref{ran3-9}), we have
that
\beq\label{ran3-11}
\|\phi_3(fg)-\phi_3(f)\phi_3(g)\|<3/4^2
\eneq
for all $f, g\in {\cal F}_2.$ In this fashion, we obtain a sequence of
\morp s $\phi_k: C(X)\to C(Y_k),$ where $Y_k=Z_{n_k},$ such that
\beq\label{ran3-12}
\pi_{n_k}\circ \phi_k={\rm id}_{C(X)}\andeqn\\
\|\phi_k(fg)-\phi_k(f)\phi_k(g)\|<3/4^{k-1},
\eneq
for all $f, g\in {\cal F}_k,$ $k=1,2,....$
It follows that, for any $f,g\in C(X),$
\beq\label{ran3-13}
\lim_{k\to\infty}\|\phi_k(fg)-\phi_k(f)\phi_k(g)\|=0.
\eneq
\end{proof}

We have the following corollary.

\begin{cor}\label{Cran3}
Let $Y$ be a finite CW complex and $P\in M_r(C(Y))$ be a non-zero projection for some integer $r\ge 1.$
Let $X$ be a compact metric space of $Y$ and let $C=\pi(PM_r(C(Y))P),$ where
$\pi:  M_r(C(Y))\to M_r(C(X))$ is the quotient map defined by $\pi(f)=f|_X.$
Then there exists a sequence of finite CW complex $Y\supset Y_n\supset Y_{n+1}$ each of which is
a compact subset of $Y$  and there exists a \morp\, $\phi_n: C\to P_n(C(Y_n))P_n$ such that
\beq\label{Cran3-1}
\pi_n\circ \phi_n={\rm id}_{C},\,\,\,n=1,2,...\andeqn\\
\lim_{n\to\infty}\|\phi_n(f)\phi_n(g)-\phi_n(fg)\|=0
\eneq
for all $f, g\in C(X),$ where $P_n=P|_{Y_n}$ and $\pi_n: C(Y_n)\to C(X)$ is the quotient map defined by
$\pi_n(f)=f|_X$ for all $f\in C(Y_n).$
\end{cor}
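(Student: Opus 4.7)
The plan is to derive the corollary from Lemma \ref{Ran3} by combining matrix amplification with a compression by $P_n$. First, apply Lemma \ref{Ran3} to $X \subset Y$ to obtain a decreasing sequence of finite CW complex neighborhoods $X \subset Y_n \subset Y$ and contractive completely positive maps $\phi_n^{(0)}: C(X) \to C(Y_n)$ with $\pi_n \circ \phi_n^{(0)} = \id_{C(X)}$ and $\|\phi_n^{(0)}(fg) - \phi_n^{(0)}(f)\phi_n^{(0)}(g)\| \to 0$. Inspection of the construction in \ref{Ran3} shows that each $\phi_n^{(0)}$ is actually the restriction to $Y_n$ of a single fixed CP lift $\psi_1: C(X) \to C(Y_1)$, and the chosen neighborhoods satisfy $\sup_{y \in Y_n} d(y,X) \to 0$. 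Amplify to $\tilde\phi_n = \phi_n^{(0)} \otimes \id_{M_r}: M_r(C(X)) \to M_r(C(Y_n))$; this remains contractive, completely positive, asymptotically multiplicative, and still splits the matrix-amplified restriction map.

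Set $P_n = P|_{Y_n} \in M_r(C(Y_n))$, which is a genuine projection since $P$ is, and define
\[
\phi_n : C \to P_n M_r(C(Y_n)) P_n, \qquad \phi_n(a) = P_n \tilde\phi_n(a) P_n.
\]
This is visibly contractive and completely positive. Since $\pi_n(P_n) = P|_X = \pi(P)$ and every $a \in C$ satisfies $a = \pi(P) a \pi(P)$, one verifies directly that $\pi_n \circ \phi_n = \id_C$.

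The remaining content is asymptotic multiplicativity of $\phi_n$, and it rests on the auxiliary estimate $\|P_n - Q_n\| \to 0$, where $Q_n := \tilde\phi_n(\pi(P))$. Indeed, both $P_n$ and $Q_n$ arise as restrictions to $Y_n$ of fixed matrix-valued continuous functions on $Y_1$ (namely $P|_{Y_1}$ and $\tilde\psi_1(\pi(P))$), and both of these functions agree with $\pi(P)$ on $X$; uniform continuity on $Y_1$ together with $Y_n \searrow X$ forces $\|P_n - Q_n\| \to 0$. Using this, the relation $\tilde\phi_n(a) \approx \tilde\phi_n(a)\tilde\phi_n(\pi(P)) = \tilde\phi_n(a) Q_n \approx \tilde\phi_n(a) P_n$ (valid for $a \in C$ because $a = a\pi(P)$, and symmetrically on the left) combined with asymptotic multiplicativity of $\tilde\phi_n$ yields
\[
\phi_n(ab) = P_n \tilde\phi_n(ab) P_n \approx P_n \tilde\phi_n(a)\tilde\phi_n(b) P_n \approx P_n \tilde\phi_n(a) P_n \tilde\phi_n(b) P_n = \phi_n(a)\phi_n(b)
\]
for $a, b \in C$, with the error going to $0$ uniformly on bounded sets.

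The only nontrivial step is the norm convergence $\|P_n - Q_n\| \to 0$; everything else is routine bookkeeping. This step is where one genuinely uses both that $Y_n \searrow X$ and that (as extracted from the proof of \ref{Ran3}) the maps $\tilde\phi_n$ descend from one fixed CP lift on $Y_1$, so that $Q_n$ is the restriction of a continuous matrix function on $Y_1$ agreeing with $P$ on $X$.
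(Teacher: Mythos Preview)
Your argument is correct. The paper states this as a corollary of Lemma~\ref{Ran3} without proof, and your route---matrix amplification $\tilde\phi_n=\phi_n^{(0)}\otimes{\rm id}_{M_r}$ followed by compression by $P_n$---is exactly the natural way to fill in the details. The one substantive step, $\|P_n-Q_n\|\to 0$, is handled correctly by reading off from the proof of Lemma~\ref{Ran3} that the $\phi_n^{(0)}$ are all restrictions of a single lift $\psi_1$ and that $\sup_{y\in Y_n}{\rm dist}(y,X)\to 0$; both facts are explicit in that proof. Your closing remark that the error vanishes ``uniformly on bounded sets'' is slightly more than what is asserted or needed (the corollary only claims pointwise convergence $\|\phi_n(fg)-\phi_n(f)\phi_n(g)\|\to 0$ for each fixed $f,g$), but the pointwise version is exactly what your argument establishes.
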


\begin{lem}\label{Ran4}
Let $Y$ be a finite CW complex and $P\in M_r(C(Y))$ be a non-zero projection for some integer $r\ge 1.$
Let $X$ be a compact metric space of $Y$ and let $C=\pi(PM_r(C(Y))P),$ where
$\pi:  M_r(C(Y))\to M_r(C(X))$ is the quotient map defined by $\pi(f)=f|_X.$
Suppose that $A$ is  a unital infinite dimensional separable simple \CA\, with $TR(A)\le 1.$ For any $\kappa\in KL_e(C, A)^{++},$ any affine continuous map
$\gamma: T(A)\to T_{\rm f}(C)$ and any continuous \hm\, $\af: U(M_{\infty}(C))/CU(M_{\infty}(C))\to U(A)/CU(A)$ such that
$\kappa,$ $\gamma$ and $\af$ are compatible, then there is a unital
monomorphism $h: C\to A$ such that
\beq\label{ran4-1}
[h]=\kappa,\,\,\, h_{\sharp}=\gamma\andeqn h^{\ddag}=\af.
\eneq
\end{lem}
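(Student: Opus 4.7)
The strategy is to reduce to Lemma~\ref{Ran2} by approximating $C$ by algebras $C_n=P_nM_r(C(Y_n))P_n$ with $Y_n$ a finite CW complex, realize the target invariants on each $C_n$, pull back to $C$ via the asymptotic lifts supplied by Corollary~\ref{Cran3}, and extract a genuine monomorphism via an approximate intertwining based on the uniqueness theorem~\ref{CM1}.

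First, apply Corollary~\ref{Cran3} to obtain a decreasing sequence $Y\supset Y_1\supset Y_2\supset\cdots\supset X$ of finite CW complexes with $\bigcap_nY_n=X$, together with \morp s $\phi_n: C\to C_n:=P_nM_r(C(Y_n))P_n$ (where $P_n=P|_{Y_n}$) satisfying $\pi_n\circ\phi_n=\id_C$ and $\|\phi_n(fg)-\phi_n(f)\phi_n(g)\|\to 0$. Transport $(\kappa,\gamma,\af)$ to $C_n$ via the surjection $\pi_n: C_n\to C$: set $\kappa^{(n)}:=[\pi_n]\cdot\kappa\in KK(C_n,A)$, $\gamma^{(n)}(\tau)(c):=\gamma(\tau)(\pi_n(c))$, and $\af^{(n)}:=\af\circ\pi_n^{\ddag}$. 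These are mutually compatible on $C_n$, but $\kappa^{(n)}$ will typically fail the positivity condition~$^{++}$ and $\gamma^{(n)}$ will fail faithfulness, since $\pi_n$ can annihilate nonzero projections and positive elements supported on $Y_n\setminus X$.

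To repair this I use a small-corner absorption. Pick a nonzero projection $e_n\in A$ with $\tau(e_n)<1/n$ for all $\tau\in T(A)$, and use Lemma~\ref{RanL1} to obtain a unital monomorphism $\rho_n: C_n\to e_nAe_n$. Replace $\kappa^{(n)}$ by $\tilde\kappa^{(n)}:=\kappa^{(n)}-[\rho_n]$, viewed as an element of $KK(C_n,(1-e_n)A(1-e_n))$, and modify $\gamma^{(n)}$ and $\af^{(n)}$ accordingly on the complementary corner. By construction $\tilde\kappa^{(n)}\in KK_e(C_n,(1-e_n)A(1-e_n))^{++}$, $\tilde\gamma^{(n)}$ takes values in $T_{\rm f}(C_n)$, and all three are mutually compatible. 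Apply Lemma~\ref{Ran2} on $C_n$ with target $(1-e_n)A(1-e_n)$ to obtain a unital monomorphism $H_n': C_n\to(1-e_n)A(1-e_n)$ realizing $\tilde\kappa^{(n)},\tilde\gamma^{(n)},\tilde\af^{(n)}$ within tolerance $1/n$ on fixed increasing finite subsets of $(C_n)_{s.a.}$ and $U_c(K_1(C_n))$. Put $H_n:=H_n'\oplus\rho_n: C_n\to A$ and $L_n:=H_n\circ\phi_n: C\to A$. Since $\pi_n\circ\phi_n=\id_C$, a short computation shows $[L_n]\to\kappa$, $(L_n)_\sharp\to\gamma$, and $L_n^{\ddag}\to\af$ on any prescribed finite test sets of $C$, while $L_n$ is $\delta_n$-${\cal G}_n$-multiplicative with $\delta_n\to 0$ and $\bigcup_n{\cal G}_n$ dense in $C$.

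Finally, by Corollary~\ref{CM1} (equivalently Theorem~\ref{MT4}), for any finite ${\cal F}\subset C$ and any $\ep>0$ there are tolerances making any two such approximate realizers approximately unitarily equivalent on ${\cal F}$ within $\ep$. A standard Elliott approximate intertwining---choosing unitaries $u_n\in A$ so that $\|{\rm Ad}(u_1\cdots u_n)\circ L_n(f)-{\rm Ad}(u_1\cdots u_{n-1})\circ L_{n-1}(f)\|<2^{-n}$ for $f$ in a fixed exhaustion ${\cal F}_n$---produces a pointwise limit $h: C\to A$, automatically a unital $*$-homomorphism with $[h]=\kappa$, $h_\sharp=\gamma$, and $h^{\ddag}=\af$. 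Faithfulness of $\gamma$ forces $h$ to be a monomorphism: any nonzero $c\in C$ admits $\tau\in T(A)$ with $\gamma(\tau)(c^*c)>0$, hence $\tau(h(c^*c))>0$. The main obstacle is the $KK_e^{++}$/faithfulness transport in the second step; once the small-corner absorption trick is in place, the rest is a routine combination of Lemma~\ref{Ran2} with the uniqueness theorem and an Elliott intertwining.
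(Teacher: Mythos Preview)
Your overall architecture matches the paper's: invoke Corollary~\ref{Cran3} to approximate $C$ by $C_n=P_nM_r(C(Y_n))P_n$, transport $(\kappa,\gamma,\af)$ through $\pi_n$, realize them approximately on each $C_n$ via Lemma~\ref{Ran2}, compose with $\phi_n$, and intertwine using the uniqueness theorem. The gap is in your small-corner absorption. First, your worry about $\kappa^{(n)}$ is unfounded: the $Y_n$ produced in Lemma~\ref{Ran3} are unions of closed balls centered at points of $X$, so every connected component of $Y_n$ meets $X$; hence any nonzero projection in $M_\infty(C_n)$ has positive rank on some component and therefore nonzero restriction to $X$, so $\kappa^{(n)}=\kappa\circ[\pi_n]$ already lies in $KK_e(C_n,A)^{++}$. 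Subtracting $[\rho_n]$ only endangers this (it can be rescued by choosing $e_n$ small enough, using strict comparison in $A$ and the finite generation of $K_0(C_n)$, but you do not argue this).

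More seriously, subtraction cannot repair the trace map. For $H_n=H_n'\oplus\rho_n$ to have traces near $\gamma^{(n)}$, the functional that $H_n'$ must realize is, up to normalization, $t\mapsto\gamma(\bar t)\circ\pi_n-\bar t\circ\rho_n$, and this is not even positive: take any positive $c\in C_n$ with $c|_X=0$ but $c\ne 0$ (such $c$ exist whenever $Y_n\supsetneq X$); then $\gamma(\bar t)(\pi_n(c))=0$ while $\bar t(\rho_n(c))>0$ since $\rho_n$ is a monomorphism into a simple corner. Thus $\tilde\gamma^{(n)}$ is not a tracial state and Lemma~\ref{Ran2} does not apply. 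The paper's repair goes in the opposite direction: it keeps $\kappa_n=\kappa^{(n)}$ and \emph{adds} a small faithful piece, setting $\gamma_n(\tau)(b)=\tau(1-q_n)\,\gamma(\tau)(\pi_n(b))+\tau(\phi_{0,n}(b))$ for a unital monomorphism $\phi_{0,n}:C_n\to q_nAq_n$ with $\tau(q_n)<1/n$. This $\gamma_n$ is a genuine faithful tracial state within $O(1/n)$ of the compatible $\gamma^{(n)}$; alternatively, since compatibility with $\kappa^{(n)}$ only pins down the total measure on each component of $Y_n$ and those totals are all positive, one may redistribute within components to obtain a $\gamma_n$ that is simultaneously faithful and exactly compatible. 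With this correction the remainder of your outline goes through and coincides with the paper's argument.
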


\begin{proof}
Let $Y_n,$ $P_n,$ $\pi_n$ and $\phi_n$ be as given by \ref{Cran3}.
Let $B_n=P_nM_r(C(Y_n))P_n.$ Let $\{q_n'\}$ be a sequence of non-zero projections such that
$\tau(q_n')<1/n$ for all $\tau\in T(A),$ $n=1,2,....$  Since $q_n'Aq_n'$ is a unital infinite dimensional simple \CA\, of tracial rank at most one, by \ref{RanL1},  there exists a non-zero projection
$q_n\le q_n'$ and  a unital monomorphism $\phi_{0,n}: B_n\to q_nAq_n,$ $n=1,2,...$

Define $\gamma_n: T(A)\to T_{\rm f}(B_n)$ by
\beq\label{ran4-4}
\gamma_1(\tau)(b)=
\tau(1-q_n)\gamma(\tau)(\pi_n(b))+
\tau\circ \phi_{0,n}(b)
\eneq
for all $b\in B_n$ and
for all $\tau\in T(A).$

Define $s_n: B_n\to B_{n+1}$ by $s_n(f)=f|_{Y_n}$ for all $f\in B_n,$ $n=1,2,....$

Let $\kappa_n=\kappa\circ [\pi_n]$ be
in $Hom_{\Lambda}(\underline{K}(B_n), \underline{K}(A))$ and let
$\af_n: U(M_{\infty}(B_n))/CU(M_{\infty}(C))\to
U(A)/CU(A)$ defined by $\af_n=\af\circ \pi_n^{\ddag}.$

Note that $K_i(B_n)$ is finitely generated ($i=0,1$).
Let $\ep_n>0,$ ${\cal F}_n\subset B_n$ be a finite subset and $Q_n\subset \underline{K}(B_n)$ be a finite subset
such that $(\ep_n, {\cal F}_n, {\cal P}_n)$ is a $\underline{K}$-triple and $(\ep_n, {\cal F}_n)$ is $KK$-pair for
$B_n,$ $n=1,2,....$
Put ${\cal Q}_n=[\pi_n]({\cal P}_n),$ $n=1,2,....$ We may assume that $[s_n]({\cal P}_n)\subset {\cal P}_{n+1}$ and
$\cup_{n=1}^{\infty}{\cal Q}_n=\underline{K}(C).$
Let ${\cal G}_n\subset C$ be a finite subset and let $\dt_n>0$ be such that
$(\dt_n, {\cal G}_n, {\cal Q}_n)$ is a $\underline{K}$-triple.

Choose, for each $n,$  a finite subset ${\cal F}_n\subset B_n$ such that
$s_n({\cal F}_n)\subset {\cal F}_{n+1}$ and $\cup_{n=1}^{\infty}\pi_n({\cal F}_n)$ is dense in
$C.$ Choose, for each $n,$  a finite subset ${\cal H}_n\subset (B_n)_{s.a}$ such that
$s_n({\cal H}_n)\subset {\cal H}_{n+1}$ and $\cup_{n=1}^{\infty}\pi_n({\cal H}_n)$ is dense in
$C_{s.a.}.$ Choose, for each $n,$ a finite subset ${\cal U}_n\subset U(M_{N(n)}(B_n))$ (for some integer
$N(n)$) such that
$s_n({\cal U}_n)\subset {\cal U}_{n+1}$ and $\cup_{n=1}^{\infty}\pi_n({\cal U}_n)$ is dense in $U(M_{\infty}(C)).$

It follows from \ref{Ran2} there is, for each $n,$ a unital monomorphism
$h_n: B_n\to A$ such that
\beq\label{ran4-5}
[h_n]&=&\kappa_n\\\label{ran4-6}
|\tau\circ h_n(f)-\gamma_n(\tau)(f)|&<&1/2^n\tforal f\in {\cal H}_n\andeqn \tforal \tau\in T(A)\\\label{ran4-7}
\andeqn {\rm dist}(h_n^{\ddag}({\bar u}), \af_n({\bar u}))&<&1/2^n\tforal u\in {\cal U}_n,
\eneq
$n=1,2,....$  Define $L_n=h_n\circ \phi_n.$
Note that
\beq\label{ran4-8}
\pi_n\circ \phi_n&=&{\rm id}_{C}\andeqn\\
\lim_{n\to\infty}\|\phi_n(fg)-\phi_n(f)\phi_n(g)\|&=&0\tforal f, g\in C.
\eneq
Thus, without loss of generality, we may assume that
$\phi_n$ is $(\dt_{n-1}, {\cal G}_{n-1})$-multiplicative.
It then follows that from (\ref{ran4-8}) and (\ref{ran4-5}) that
\beq\label{ran4-9}
[h_n\circ \phi_n]|_{{\cal Q}_n}=(\kappa_n)|_{{\cal Q}_n},\,\,\,n=1,2,....
\eneq
By (\ref{ran4-9}), (\ref{ran4-6}), (\ref{ran4-7}), combining \ref{measure}  and applying \ref{CPM},
we obtain a subsequence $h_{n_k}\circ \phi_{n_k}: C\to A$ and a sequence
of unitaries $\{u_k\}\subset A$ such that
\beq\label{ran4-10}
\|{\rm Ad}\, u_k\circ h_{n_{k+1}}\circ \phi_{n_{k+1}}(f)-{\rm Ad}\, u_{k-1}\circ h_{n_k}\circ \phi_{n_k}(f)\|<1/2^{k+1}
\eneq
for all $f\in {\cal F}_k.$
It follows that
$\{{\rm Ad}\, u_{k-1}\circ h_{n_k}\circ \phi_{n_k}(f)\}$ is Cauchy for all $f\in C.$
Define
$h: C\to A$ by
$$
h(f)=\lim_{k\to\infty}{\rm Ad}\, u_{k-1}\circ h_{n_k}\circ \phi_{n_k}(f)\tforal f\in C.
$$
It is ready to check that $h$ satisfy all requirements of the lemma.

\end{proof}

\begin{thm}\label{Ran5}
Let $C$ be a unital AH-algebra and let $A$ be a unital infinite dimensional separable simple \CA\, with $TR(A)\le 1.$ For any $\kappa\in KL_e(C, A)^{++},$ any affine continuous map
$\gamma: T(A)\to T_{\rm f}(C)$ and any continuous \hm\, $\af: U(M_{\infty}(C))/CU(M_{\infty}(C))\to U(A)/CU(A)$ such that
$\kappa,$ $\gamma$ and $\af$ are compatible, then there is a unital
monomorphism  $h: C\to A$ such that
\beq\label{ran5-1}
[h]=\kappa,\,\,\, h_{\sharp}=\gamma\andeqn h^{\ddag}=\af.
\eneq
\end{thm}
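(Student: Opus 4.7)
The plan is to reduce the general AH case to Lemma \ref{Ran4} via a standard Elliott-type approximate intertwining. First I would write $C=\overline{\bigcup_{n=1}^\infty B_n}$ with $B_n\subset B_{n+1}$, where each $B_n$ is a unital $C^*$-subalgebra of $C$ of the form $\pi_n(P_n)M_{r(n)}(C(X_n))\pi_n(P_n)$ considered in Lemma \ref{Ran4} (this is the same reduction already used in the proof of Theorem \ref{MT4}). Writing $\iota_n:B_n\hookrightarrow C$ for the inclusion, I pull back the given data: set $\kappa_n=\kappa\circ[\iota_n]\in KL_e(B_n,A)^{++}$, define $\gamma_n:T(A)\to T_{\rm f}(B_n)$ by $\gamma_n(\tau)(b)=\gamma(\tau)(\iota_n(b))$ (faithfulness on $B_n$ is automatic, since $\iota_n$ is injective and $\gamma(\tau)$ is faithful on $C$), and define $\alpha_n=\alpha\circ \iota_n^{\ddag}$. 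The compatibility of $(\kappa_n,\gamma_n,\alpha_n)$ is inherited from that of $(\kappa,\gamma,\alpha)$.

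Next, I apply Lemma \ref{Ran4} to each $B_n$ to obtain a unital monomorphism $h_n:B_n\to A$ with $[h_n]=\kappa_n$, $(h_n)_{\sharp}=\gamma_n$, and $h_n^{\ddag}=\alpha_n$. The key observation is that the restriction $h_{n+1}|_{B_n}$ is also a unital monomorphism from $B_n$ into $A$ and, by construction, realizes exactly the same triple of invariants on $B_n$. Consequently Theorem \ref{MT5} implies that $h_n$ and $h_{n+1}|_{B_n}$ are approximately unitarily equivalent.

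Fix finite subsets ${\cal F}_n\subset B_n$ with ${\cal F}_n\subset {\cal F}_{n+1}$ and $\bigcup_n{\cal F}_n$ dense in the unit ball of $C$. Starting from $U_1=1_A$, I would then inductively choose unitaries $U_n\in A$ such that
\[
\|{\rm Ad}\,U_{n+1}\circ h_{n+1}(b)-{\rm Ad}\,U_n\circ h_n(b)\|<2^{-n}\rforal b\in {\cal F}_n,
\]
which is possible by the approximate unitary equivalence of $h_n$ and $h_{n+1}|_{B_n}$. The sequence $\{{\rm Ad}\,U_n\circ h_n(b)\}_n$ is then Cauchy for every $b\in \bigcup_n B_n$, so extends by continuity to a unital $*$-homomorphism $h:C\to A$. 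For each $n$, $h\circ\iota_n$ is a pointwise limit of maps approximately unitarily equivalent to $h_n$, hence realizes the triple $(\kappa_n,\gamma_n,\alpha_n)$; passing to the inductive limit yields $[h]=\kappa$, $h_{\sharp}=\gamma$, and $h^{\ddag}=\alpha$. Finally $h$ is a monomorphism: if $h(c)=0$ then $\gamma(\tau)(c^*c)=\tau(h(c^*c))=0$ for every $\tau\in T(A)$, and faithfulness of $\gamma(\tau)$ on $C$ forces $c=0$.

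The only real obstacle is ensuring that all three pieces of the invariant (the $KL$-class, the tracial map, and the de la Harp--Scandalis/rotation data on the unitary group) are preserved simultaneously in the limit; this is precisely what Theorem \ref{MT5} was designed to guarantee, so once the data is consistently pulled back to each $B_n$ the intertwining is routine.
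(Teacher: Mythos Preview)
Your proposal is correct and follows essentially the same route as the paper's proof: reduce to the subalgebras $B_n$, realize the restricted data on each $B_n$ via Lemma \ref{Ran4}, and then intertwine using Theorem \ref{MT5}. Your write-up is in fact slightly more explicit than the paper's in two places---you spell out why $h_{n+1}|_{B_n}$ and $h_n$ carry the same triple of invariants (which is what makes Theorem \ref{MT5} applicable), and you justify injectivity of the limit $h$ via faithfulness of $\gamma(\tau)$---points the paper leaves to the reader.
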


\begin{proof}
Write $C=\lim_{n\to\infty}(B_n, \psi_n),$ where $B_n=P_nM_{r(n)}(C(Y_n))P_n,$ $Y_n$ is a finite CW complex,
$P_n\in M_{r(n)}(C(Y_n))$ is a projection and $\psi_n: B_n\to B_{n+1}$ is a unital \hm.
Denote by $\psi_{n, \infty}: B_n\to C$ the unital \hm\, induced by the inductive limit system.
Then $\phi_{n, \infty}(B_n)\cong Q_nM_{r(n)}(C(X_n))Q_n,$ where $X_n\subset Y_n$ is a compact subset,
$Q_n=\pi_n(P_n)$ and where $\pi_n: M_{r(n)}(C(Y_n))\to M_{r(n)}(C(X_n))$ is the quotient map defined
by $\pi_n(f)=f|_{X_n}.$ Put $C_n=\phi_{n, \infty}(B_n).$ We will identify $C_n$ with $Q_nM_{r(n)}(C(X_n))Q_n$ and write
$C=\overline{\cup_{n=1}^{\infty}  C_n},$ where $C_n=Q_nM_{r(n)}(C(X_n)Q_n,$

Denote by $\imath_n: C_n\to C_{n+1}$ and $\imath_{n, \infty}: C_n\to C$ be the embedding, respectively.
Let $\kappa_n=\kappa\circ [\imath_{n,\infty}]$ be
in $Hom_{\Lambda}(\underline{K}(C_n), \underline{K}(A))$ and
let $\af_n: U(M_{\infty}(C_n))/CU(M_{\infty}(C))\to
U(A)/CU(A)$ be defined by $\af_n=\af\circ \imath_{n,\infty}^{\ddag}.$
Let $(\imath_{n, \infty})_{\sharp}: T_{\rm f}(C)\to T_{\rm f}(C_n)$ be induced by $\imath_{n,\infty}$ and define
$\gamma_n: T(A)\to T_f(C_n)$ by $(\imath_{n, \infty})_{\sharp}\circ \gamma.$
Put ${\cal Q}_n=[\imath_{n,\infty}]({\cal P}_n),$ $n=1,2,....$ We may assume that
$\cup_{n=1}^{\infty}{\cal Q}_n=\underline{K}(C).$
Choose, for each $n,$  a finite subset ${\cal F}_n\subset C_n$ such that
${\cal F}_n\subset {\cal F}_{n+1}$ and $\cup_{n=1}^{\infty}{\cal F}_n$ is dense in
$C.$
It follows from \ref{Ran4} there is, for each $n,$ a unital monomorphism
$\phi_n: C_n\to A$ such that
\beq\label{ran5-2}
[\phi_n]=\kappa_n,\,\,\,
(\phi_n)_{\sharp} =\gamma_n
\andeqn \phi_n^{\ddag}=\af_n
\eneq
$n=1,2,....$  By applying \ref{MT5}, for each $n,$ there exists a unitary $u_n\in A$ (with $u_0=1$) such that
\beq\label{ran5-3}
\|{\rm Ad}\, u_n\circ  \phi_{n+1}\circ \imath_n(f)-{\rm Ad}\, u_{n-1}\circ \phi_n(f)\|<1/2^n\tforal f\in {\cal F}_n,
\eneq
$n=1,2,....$
We obtain a unital monomorphism $h: C\to A$ such that
\beq\label{ran5-4}
h(f)=\lim_{n\to\infty}{\rm Ad}\, u_n\circ  \phi_{n+1}\circ \imath_n(f)\tforal f\in C.
\eneq
One checks that $h$ meets all requirements of the theorem.

\end{proof}

\begin{cor}
Let $C$ be a unital AH-algebra and let $A$ be a unital infinite dimensional separable simple \CA\, with $TR(A)\le 1.$ For any $\kappa\in KL_e(C, A)^{++},$ any affine continuous map
$\gamma: T(A)\to T_{\rm f}(C)$ and any continuous \hm\, $\af: K_1(C)\to \Aff(T(A))/\overline{K_0(A)}$ such that
$\kappa,$ $\gamma$  are compatible, then there is a unital
\hm\, $h: C\to A$ such that
\beq\label{ran6-1}
[h]=\kappa,\,\,\, h_{\sharp}=\gamma\andeqn h^{\dag}=\af.
\eneq
\end{cor}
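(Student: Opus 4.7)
The plan is to deduce this corollary directly from Theorem \ref{Ran5} by promoting the given dagger map $\af: K_1(C) \to \Aff(T(A))/\overline{\rho_A(K_0(A))}$ to a continuous homomorphism $\tilde{\af}: U(M_\infty(C))/CU(M_\infty(C)) \to U(M_\infty(A))/CU(M_\infty(A))$ that is compatible with $\kappa$ and $\gamma$ in the sense of Definition \ref{KL+} and whose associated dagger map recovers $\af$.

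First I would use the fixed splittings $J_C$ and $J_A$ of the exact sequence (\ref{HS-4}) to identify
$$U(M_\infty(C))/CU(M_\infty(C)) = \Aff(T(C))/\overline{\rho_C(K_0(C))} \oplus J_C(K_1(C))$$
as abelian groups, and similarly for $A$. Since $\kappa$ and $\gamma$ are compatible on $K_0$, the affine map $\gamma$ descends to a continuous homomorphism $\gamma_0: \Aff(T(C))/\overline{\rho_C(K_0(C))} \to \Aff(T(A))/\overline{\rho_A(K_0(A))}$. Writing $\kappa_*$ for $\kappa|_{K_1(C)}$, I define $\tilde{\af}$ to act as $\gamma_0$ on the affine summand and on the splitting image by
$$\tilde{\af}(J_C(x)) = J_A(\kappa_*(x)) + \af(x), \quad x \in K_1(C),$$
extending by additivity across the direct sum. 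A routine check shows $\tilde{\af}$ is continuous, induces $\kappa_*$ on $K_1$ and $\gamma_0$ on the affine quotient, so $\kappa$, $\gamma$, and $\tilde{\af}$ are compatible, and that $(\id - J_A) \circ \tilde{\af} \circ J_C = \af$.

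Applying Theorem \ref{Ran5} to the triple $(\kappa, \gamma, \tilde{\af})$ then yields a unital monomorphism $h: C \to A$ with $[h] = \kappa$, $h_\sharp = \gamma$, and $h^{\ddag} = \tilde{\af}$. By the definition of the dagger map, $h^\dag = (\id - J_A) \circ h^{\ddag} \circ J_C = \af$, which is the desired conclusion. The only obstacle is the bookkeeping with the splittings of (\ref{HS-4}); no new analytic input is needed beyond Theorem \ref{Ran5}.
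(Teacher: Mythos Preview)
Your proposal is correct and is exactly the intended derivation: the paper states this corollary without proof, treating it as immediate from Theorem \ref{Ran5}, and the passage from $h^{\ddag}=\tilde\af$ to $h^{\dag}=\af$ via the fixed splittings $J_C,\,J_A$ is precisely the bookkeeping you describe. Note that Theorem \ref{Ran5} actually yields a unital monomorphism, which is a fortiori a unital homomorphism as required.
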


\end{document}